\def\op#1{{\text{#1}}}
\newcommand{\ring}[1]{\mathbb{#1}}
\def\lto{\ensuremath{\,\leadsto\,}}
\def\dih{\op{dih}}
\def\line{$\ell$}
\def\text{\hbox}
\def\sz{small} 
\def\|{{\hskip0.1em|\hskip-0.15em|\hskip0.1em}}
\def\coloneq{\mathrel{\mathop:}=}
\def\sfrac#1#2{{\textstyle \frac {#1} {#2}}}
\def\eqref#1{(\ref{#1})}
\def\rmx{\rm}
\def\vn{\fontencoding{T5}\selectfont}
\def\calc#1{{\textsc{calc-#1}}}
\def\pt{\mathrm{pt}}
\def\doct{\delta_{\mathrm{oct}}}
\def\asolid{\mathrm{a}}
\def\sqroot{\mathrm{sqrt}}
\def\rcp{\mathrm{rcp}}
\def\bstein{\mathrm{B}}
\newcommand{\real}{\mbox{$\protect\mathbb R$}}
\let\And=\wedge                    
\newcommand{\Imp}{\Rightarrow}
\newcommand{\BEQ}{\mbox{\raise4pt\hbox{$\ulcorner$}}}
\newcommand{\EEQ}{\mbox{\raise4pt\hbox{$\urcorner$}}}
\let\subset=\subseteq
\newcommand{\BA}{\begin{array}[t]{l}}
\newcommand{\EA}{\end{array}}
\def\imod#1{\allowbreak\mkern10mu({\operator@font mod}\,\,#1)}
\newlength{\hsbw}
\newtheorem{thm}{Theorem}
\newtheorem{lemma}{Lemma}
\newtheorem{remark}{Remark}
\begin{document}

\title{A revision of the proof of the Kepler conjecture}

\author[Hales]{Thomas C. Hales}
\address[T.~Hales]{Math Department, University of Pittsburgh}
\email{hales@pitt.edu}

\author[Harrison]{John Harrison}
\address[J.~Harrison]{Intel Corporation,  JF1-13, 2111 NE 25th Avenue
 Hillsboro, OR 97124
 USA}
\email{johnh@ichips.intel.com}

\author[McLaughlin]{Sean McLaughlin}
\address[S.~McLaughlin]{Carnegie Mellon University}
\email{seanmcl@gmail.com}

\author[Nipkow]{Tobias Nipkow}
\address[T.~Nipkow]{Department for Informatics, Technische Universit\"at
M\"unchen}
\email{\url{www.in.tum.de/~nipkow}}

\author[Obua]{Steven Obua}
\address[S.~Obua]{Department for Informatics, Technische Universit\"at
M\"unchen}

\author[Zumkeller]{Roland Zumkeller}
\address[R.~Zumkeller]{\'Ecole Polytechnique, Paris}

\begin{abstract}
The Kepler conjecture asserts that no packing of congruent balls in three-dimensional Euclidean space 
has density greater than that of the face-centered cubic packing.  The original proof,
announced in 1998 and published in 2006, is long and complex. The process of  revision and review did not end with the publication of the proof.
This article summarizes the current status of a long-term initiative to
reorganize the original proof into a more transparent form and to provide a greater
level of certification of the correctness of the computer code and other details of the proof.
A final part of this article lists errata in the original proof of the Kepler conjecture.
\end{abstract}

\maketitle
\setcounter{tocdepth}{1}

{
\parskip=0pt

\tableofcontents

}

\section*{Introduction}

In 2006, {\it Discrete and Computational Geometry} devoted an issue to the
proof of the Kepler conjecture on sphere packings, which asserts that no
packing of congruent balls in three-dimensional Euclidean space can
have density greater than that of the face-centered cubic packing \cite{Hales:2006:DCG}, \cite{Hales:2006:DCG:3}, \cite{Hales:2006:DCG:4}, \cite{Hales:2006:DCG:6}, \cite{Ferguson:2006:DCG:5}.

The proof is long and complex.  The editors' forward to that issue remarks
that ``the reviewing of these papers was a particularly enormous and daunting task.''
``The main portion of the reviewing took place in a seminar run at E\"otvos University
over a 3 year period.  Some computer experiments were done in a detailed check.
The nature of this proof, consisting in part of a large number of inequalities having
little internal structure, and a complicated proof tree, makes it hard for humans
to check every step reliably.  Detailed checking of specific assertions found them to
be essentially correct in every case tested.  The reviewing process produced in the
reviewers a strong degree of conviction of the essential correctness of the proof
approach, and that the reduction method led to nonlinear programming problems of
tractable size.''

The process of review and revision did not end when the proof was published.
This article summarizes the current status of a long-term initiative to
reorganize the original proof into a more transparent form and to provide a greater
level of certification of the correctness of the computer code and other details of the proof.  

The article contains two parts.  The first part describes
an initiative to give a formal proof of the Kepler conjecture.  The second part gives errata in the original proof of the Kepler conjecture.  Most of these errata are minor.  However, a significant new argument appears in a separate section (Section~\ref{sec:biconnected}).  It finishes an incomplete argument in the original proof asserting that there is no loss in generality in assuming (for purposes of the main estimate) that subregions are simple polygons.  The incomplete argument was detected during the preparation of the blueprint edition of the proof, which is described in Section~\ref{sec:blueprint}.

In this article, the {\it original proof} refers to the proof published
in \cite{Hales:2006:DCG}.
S. Ferguson and T. Hales take full responsibility for every possible error in the original proof of the Kepler conjecture.  Over the past decade, many have contributed significantly to making that proof more reliable.

\part{Formal Proof Initiative}

\section{The Flyspeck project}

The purpose of a long-term project, called the Flyspeck project, is to give a formal proof of the Kepler conjecture.  This section makes some preliminary remarks about
formal proofs and gives a general overview of the current status of this project.

\subsection{Formal proof}

A formal proof is a proof in which every logical inference has been checked, all the way back to the foundational axioms of mathematics.  No step is skipped no matter how obvious it may be to a mathematician.  
A formal proof may be less intuitive, and yet is less susceptible to logical
errors.  Because of the large number of inferences involved, a computer is used to check the steps of a formal proof.

It is a large labor-intensive
undertaking to transform a traditional proof
into a formal proof.   The first stage is to expand the traditional proof in greater detail.  This stage fills in steps that a mathematician would regard as obvious,  works out arguments that the original proof leaves to the reader,  and supplies the assumed background knowledge.  
In a final stage, the detailed text is transcribed into a computer-readable format inside
a computer proof assistant.  The proof assistant  contains  mathematical axioms, logical rules of inference, and a collection of previously proved theorems. 
It validates each new lemma by stepping through each inference.  No other currently available technology is able to provide levels of certification of a complex mathematical proof that is remotely comparable to that available by formal computer verification.  A general overview
of formal proofs can be found at~\cite{gonthier:2008:formal}, \cite{Hales:2008:formal}, \cite{Harrison:2008:formal}.

Proof assistants differ in detail in the way they treat the formalization
of  a theorem that is itself a computer verification (such as
the proof of the four color theorem or the proof of the Kepler conjecture).
In general, a formal proof of a computer verification can be viewed
as a formal proof of the correctness of the computer code used in the
verification.  That is, the formal proof certifies that the the code
is a bug-free implementation of its specification.

\subsection{Formal proof of the Kepler conjecture}
\label{sec:intro}

As mentioned above, the purpose of the Flyspeck project is to give a formal proof of the Kepler conjecture.  (The project name {\it Flyspeck} comes from the acronym {\it FPK}, for the {\it Formal} {\it Proof} of the {\it Kepler} conjecture.)  This is the most complex formal proof ever undertaken.  We estimate that it may take about twenty work-years to complete this formalization project.  

The Flyspeck project is introduced in the article~\cite{hales:DSP:2006:432}.
The project page gives the latest developments~\cite{website:FlyspeckProject}.
The project is now at an advanced stage; in fact, we estimate that the project is now about half-way complete.  
One of the main purposes of this article is to present a summary of the current status of this project.

In the original proof of the Kepler conjecture, there was a long mathematical text
and  three major pieces of computer code.
The written part of the proof has been substantially revised with aims of the Flyspeck project in mind.   Section~\ref{sec:blueprint} compares this revised text with the original.  There is now a good match between the mathematical background assumed in the text and the mathematical material that is available in the proof assistant HOL Light.  Section~\ref{sec:ordinary} describes the current level of support in HOL Light for the formalization of Euclidean space and measure theory.
 In the years following the publication of the original proof, S. McLaughlin has  reworked and largely rewritten the entire body of code in a form that is more transparent and more amenable to formalization.  Section~\ref{sec:code} points out some difficulties
in verifying the computer code in its original form and
documents the reimplementation.

There have been three Ph.D. theses written on the Flyspeck
project, one devoted to each of the three major pieces of computer code.
The first piece of computer code uses interval arithmetic to verify nonlinear inequalities.  R. Zumkeller's thesis~\cite{roland-thesis} develops nonlinear inequality proving inside the proof assistant Coq.  Section~\ref{sec:zumkeller} gives an example of this work.
The second piece of computer code enumerates all tame graphs.
(The definition of tameness is rather intricate; its key property is that the set of tame graphs 
includes all graphs that give a potential counterexample to the conjecture.)
G. Bauer's thesis, together with subsequent 
work with T. Nipkow, completes the formal
proof of the enumeration of tame graphs~\cite{NipkowBS-IJCAR06}.
Section~\ref{sec:graph} gives a summary of this formalization project.
The third piece of computer code generates and runs some $10^5$ linear
programs.   These linear programs show that none of the potential counterexamples
to the Kepler conjecture are actual counterexamples.  S. Obua's thesis develops the
technology to generate and verify the linear programs inside the proof assistant Isabelle~\cite{obua:phd}. 
Section~\ref{sec:lp} describes this research.

The ultimate aim is to develop a complete formal proof of the Kepler
conjecture within a single proof assistant.  Because of the scope of 
the problem and the number of researchers involved, different proof
assistants have been used for different parts of the proof:
HOL Light for background
in Euclidean geometry and the text,
Coq for
nonlinear inequality verification, and
Isabelle/HOL for graph enumeration and linear programming.  This raises the issue of how
to translate a formal proof automatically from one proof assistant to another.
Implementations of automated translation among the
proof assistants HOL, Isabelle, and Coq can be found 
at~\cite{obua:import}, \cite{McLaughlin:2006:IJCAR}, \cite{wiedijk:encoding},
\cite{695027}.

\section{Blueprint edition of the Kepler conjecture}
\label{sec:blueprint}

The {\it blueprint edition} of the proof of the Kepler conjecture is a second-generation proof that contains far more explicit detail than the original proof.  The blueprint edition is available at \cite{hales:2008:blueprint}, \cite{hales:2008:collection}.   Many proofs have been significantly simplified and systematized.   It has been written in a manner to permit easy formalization.  As its name might suggest, this version is intended as a blueprint for the construction of a formal proof.
This section 
compares the blueprint edition with the original.

\subsection{Lemmas in elementary geometry}

A collection of about 200 lemmas that can be expressed in elementary terms has been extracted from 
the original proof and placed in a separate collection~\cite{hales:2008:collection}.  This has
several advantages.  First of all, these lemmas, although elementary, are precisely the parts of the
original proof that put the greatest burden on the reader's geometrical intuition.  (Many of these
lemmas deal with the existence or non-existence of configurations of several points in $\ring{R}^3$ subject to
various metric constraints.)  Also, the lemmas can be stated without reference to the Kepler conjecture and all of the
machinery that has been introduced to give a proof.
Finally, the proofs of these lemmas rely on similar methods and are best considered
together~\cite{1271687}.  Section~\ref{sec:ordinary} on {\it Enhanced Automation} gives an approach to proving the lemmas in this collection.

These lemmas  can be
expressed in the first-order language of the real numbers; that is, they can be expressed in
the syntax of first-order logic with equality (allowing quantifiers $\forall x$, $\exists x$ with variables
running over the real numbers), the real constants $0$, $1$, and ring operations $(+)$, $(-)$,
$(\cdot)$ on $\ring{R}$.  In fact, L. Fejes T\'oth's statement of the Kepler conjecture as
an optimization problem in a finite number of variables can itself be expressed in the
first-order language of the real numbers~\cite{Toth:1972:Lagerungen}.  (For this, the truncation used by L. Fejes T\'oth must be modified slightly so that the truncated Voronoi cells are polyhedra.)  Thus, it should come as no surprise that many
of the intermediate lemmas in the proof can also be expressed in this manner.

For example, consider the statement asserting the existence of a circumcenter of a triangle: 
if three points in the plane are not collinear, then there exists a
point in the plane that is equidistant from all three.  This can be expressed in elementary terms
as follows:  for every $(x_1,y_1)$, $(x_2,y_2)$, $(x_3,y_3)$, if there do not exist $t_1, t_2$, and $t_3$
for which $t_1 (x_1,y_1) + t_2 (x_2,y_2)+ t_3 (x_3,y_3) = (0,0)$ and $t_1+t_2+t_3=1$, then there exists
$(x,y)$ such that 
$$
  (x-x_1)^2 + (y-y_1)^2 = (x-x_2)^2 + (y-y_2)^2 = (x-x_3)^2 + (y-y_3)^2. 
$$

\subsection{Background material}

There is now a close
match between what the blueprint edition assumes as background and what the proof assistant {\it HOL Light}
provides, as described in Section~\ref{sec:ordinary}.  The blueprint edition develops 
substantial background
material in trigonometry, measure and integration, hypermaps, and fans (a geometric realization of a hypermap).   It turns out
that only a small number of {\it primitive} volumes need to be computed for the proof of the Kepler
conjecture. These primitives include the volume of a ball, a tetrahedron, and right circular cone.  
No line or surface integrals are required.

A hypermap consists of  three permutations $e,n,f$ (on a finite set $D$) 
that compose to the identity $e\circ n\circ f= I$.  A hypermap is the
combinatorial structure used by Gonthier in his formal proof of the four color theorem.  In 2005,
the proof of the Kepler conjecture was rewritten in terms of hypermaps, because it is better suited
for formal proofs than planar graphs.  

The Jordan curve theorem (JCT) has been formalized as a step in the Flyspeck project~\cite{Hales:2007:jordan}.  In fact, something weaker than the JCT
suffices.  The project only uses the JCT for curves on
the surface of a unit sphere consisting of a finite number of arcs of great circles; that is, a 
spherical polygonal version of the JCT.  In the proof of the Kepler conjecture, the combinatorial structure of
a cluster of spheres is encoded as a hypermap.  An Euler characteristic calculation, based on the JCT, shows that
these hypermaps are planar.  The planarity of these hypermaps is a crucial property that is used in the enumeration
of tame graphs (Section~\ref{sec:graph}).
The background chapter in the blueprint edition contains detailed proofs of these facts.

The blueprint edition contains several introductory essays that introduce 
the main concepts in the proof, including the algorithms implemented by the computer code.

The formalization of the blueprint text started in
2008 with work of J. Rute (CMU) and the Hanoi Flyspeck group. The current members of this group are {\vn Tr\`\acircumflex n Nam Trung}, {\vn Nguy\~\ecircumflex n T\'\acircumflex t Th\'\abreve ng}, {\vn Ho\`ang L\ecircumflex\ Tr\uhorn\`\ohorn ng}, {\vn Nguy\~\ecircumflex n Quang Tr\uhorn\h\ohorn ng}, {\vn V\~u Kh\'\abreve c K\h y}, {\vn Nguy\~\ecircumflex n Anh T\acircumflex m}, {\vn Nguy\~\ecircumflex n Tuy\ecircumflex n Ho\`ang}, {\vn Nguy\~\ecircumflex n \DJ\'\uhorn c Ph\uhorn \ohorn ng}, {\vn V\uhorn \ohorn ng Anh Quy\`\ecircumflex n}, {\vn Phan Ho\`ang Ch\ohorn n}, and managed by {\vn T\d a Th\d i Ho\`ai An}.  The blueprint formalization is still at an early stage.

\section{Formalizing the ordinary mathematics}
\label{sec:ordinary}

This section describes some of the
work on formalizing Euclidean space and measure theory, and the development of
further proof automation, which should be useful in this endeavor.

The computer code in Flyspeck has so far received the lion's share of the
formal effort. This is entirely reasonable since there are, or at least were,
real questions about the feasibility of reproducing these results in a formal
way. However, the Flyspeck proof includes a large amount of `ordinary'
mathematics, which also needs to be formalized. Here we are on fairly safe
ground in principle, because by now we understand the formalization of
mainstream mathematics quite well~\cite{wiedijk-17}. It is safe to predict that
this formalization can be done, and we can even hope for a reasonably accurate
estimate of the effort involved. Nevertheless, the formalization is certainly
non-trivial and will require considerable work.

\subsection{Formalizing Euclidean space}

Much of our work has been devoted to developing a solid general theory of
Euclidean space $\real^N$ \cite{harrison-euclidean}. For Flyspeck, we
invariably just need the special case $\real^3$. While some concepts, e.g.
vector cross products, are specific to $\real^3$, most of the theory has been
developed for general $\real^N$ so as to be more widely applicable. The theorem
prover HOL Light \cite{harrison-demo} is based on a logic without dependent
types, but we can still encode the index $N$ as a type (roughly, an arbitrary
indexing type of size $N$). This means that theorems about specific sizes like
$3$ really are just type instantiations of theorems for general $N$ stated with
polymorphic type variables. The theory contains the following:

\begin{itemize}

\item Basic properties of vectors in $\real^N$, linear operators and matrices,
dimensions of vector subspaces and other bits of linear algebra. For example,
the following is a formal statement of the theorem that a square matrix $A'$ is
a left inverse to another one $A$ iff it is a right inverse. Note that the
double use of the same type variable $N$ constrains the theorem to square
matrices:

\begin{\sz}
\begin{alltt}
|- \(\forall\)A:real^N^N A':real^N^N. (A ** A' = mat 1) \(\Iff\) (A' ** A = mat 1)
\end{alltt}
\end{\sz}

\item Metric and topological notions like distances, open sets, closure,
compactness and paths. Some of these are very general, others are more specific
to Euclidean space. Some results include the Heine-Borel theorem, the Banach
fixed-point theorem and Brouwer's fixed-point theorem. The following is a
formal statement that continuous functions preserve connectedness.

\begin{\sz}
\begin{alltt}
|-  \(\forall\)f:real^M->real^N s. f continuous_on s \(\And\) connected s 
         \(\Imp\) connected(IMAGE f s)
\end{alltt}
\end{\sz}

\item Properties of convex sets, convex hulls, cones etc. Results include
Helly's theorem, Carath\'eodory's theorem, and various classic results about
separating and supporting hyperplanes. The following states a simpler but not
entirely trivial result that convex hulls preserve compactness.

\begin{\sz}\begin{alltt}
|- \(\forall\)s:real^N->bool. compact s \(\Imp\) compact(convex hull s)
\end{alltt}\end{\sz}

\item Sequences and series of vectors and uniform convergence, Fr\'echet
derivatives and their properties, up to various forms of the inverse function
theorem, as well as specific 1-dimensional theorems like Rolle's theorem and
the Mean Value Theorem. Here is the formal statement of the chain rule for
Fr\'echet derivatives.

\begin{\sz}\begin{alltt}
|- \(\forall\)f:real^M->real^N g:real^N->real^P f' g'.
        (f has_derivative f') (at x) \(\And\)
        (g has_derivative g') (at (f x))
        \(\Imp\) ((g o f) has_derivative (g' o f')) (at x)
\end{alltt}\end{\sz}

\end{itemize}

\subsection{Formalizing measure theory}

Although the basic Euclidean theory is an important foundation, and many of the
concepts like `convex hull' are used extensively in the Flyspeck mathematics,
perhaps the most important thing to formalize is the concept of volume.

We define integrals of general vector-valued functions over subsets of
$\real^N$, using the Kurzweil-Henstock gauge integral definition. We develop
all the usual properties such as additivity and the key monotone and dominated
convergence theorems. We also develop a theory of {\em absolutely} integrable
functions, where both $f$ and $|f|$ are gauge integrable; this is known to
coincide with the Lebesgue integral. Here is a formal statement of the simple
theorem that integration preserves linear scaling:

\begin{\sz}
\begin{alltt}
|- \(\forall\)f:real^M->real^N y s h:real^N->real^P.
        (f has_integral y) s \(\And\) linear h \(\Imp\) ((h o f) has_integral h(y)) s
\end{alltt}
\end{\sz}

Using this integral applied to characteristic functions, we develop a theory of
(Lebesgue) measure, which of course gives volume in the 3-dimensional case. The
specific notion `measure zero' is formalized as {\tt negligible}, and we also
have a general notion of a set having a finite measure, and a function {\tt
measure} to return that measure when it exists. For example, this is the basic
additivity theorem:

\begin{\sz}
\begin{alltt}
|- \(\forall\)s t. measurable s \(\And\) measurable t \(\And\) DISJOINT s t
         \(\Imp\) measure(s UNION t) = measure s + measure t
\end{alltt}
\end{\sz}

We have proved that various `well-behaved' sets such as bounded convex ones and
compact ones, or more generally those with negligible frontier (boundary) are
measurable, e.g.

\begin{\sz}
\begin{alltt}
|- \(\forall\)s:real^N->bool. bounded s \(\And\) negligible(frontier s) \(\Imp\) measurable s
\end{alltt}
\end{\sz}

The main lack at the moment is a set of results for actually computing the
measures of specific sets, as needed for Flyspeck. We can evaluate most basic
1-dimensional integrals by appealing to the Fundamental Theorem of Calculus,
but we need to enhance the theory of integration with stronger Fubini-type
results so that we can evaluate multiple integrals by iterated one-dimensional
integrals. This work is in progress at the time of writing.

\subsection{Enhanced automation}

Using coordinates, many non-trivial geometric statements in $\real^3$, or other
Euclidean spaces of specific finite dimension, can be reduced purely to the
elementary theory of reals. This is known to be decidable using quantifier
elimination \cite{tarski-decision,collins,hormander-pdo2}. However, in practice
this is often problematic because quantifier elimination for nonlinear formulas
is inefficient. The problem is particularly severe if we want to have any kind
of {\em formal} proof, as we do in Flyspeck, since producing such a proof
induces further slowdowns \cite{mahboubi-hormander}, \cite{mclaughlin-harrison}. With
this in mind, we have explored a different approach to the case of purely
universally quantified formulas \cite{harrison-sos}, based on ideas of Parrilo
\cite{parrilo-semidefinite}. This involves reducing the initial problem to
semidefinite programming, solving the SDP problem using an external tool and
reconstructing a `sum-of-squares' (SOS) certificate that can easily be formally
checked.

For example, suppose we wish to verify that if a quadratic equation $a x^2 + b
x + c = 0$ has a real root, then $b^2 \geq 4 a c$. Using the SDP solver we find
an algebraic certificate $b^2 - 4 a c = (2 a x + b)^2 - 4 a (a x^2 + b x + c)$,
from which the required fact follows easily: $(2 a x + b)^2 \geq 0$ because it
is a square, and $4 a (a x^2 + b x + c) = 0$ because $x$ is a root, and so we
deduce $b^2 - 4 a c \geq 0$. This method seems very useful for automating
routine nonlinear reasoning in a way that is easy and quick to formally verify,
so that we don't have to rely on the correctness of a complicated program. It
is even capable of solving the coordinate forms of some of the simpler Flyspeck
inequalities directly, though it seems unlikely to be competitive with
customized nonlinear optimization methods as described in Section~\ref{sec:zumkeller}. For
example, one simple Flyspeck inequality is the following, which after being
reduced to a real problem with 9 variables (three coordinates for each point)
is solved by SOS in a second:
$$ \|u - v\| \geq 2 \,\And\, \|u - w\| \geq 2 \,\And\, \|v - w\| \geq 2 \,\And\,
   \|u - v\| < \sqrt{8}
   \Imp \|w - (u + v)/2\| > \|u - v\|/2.
$$

A quite different approach to geometric theorem proving is to work in the
setting of a general real vector space or normed real vector space. In this
case, other decision methods are available \cite{solovay-jointpaper}. In
particular, one of these decision procedures that we have implemented in HOL
Light can sometimes handle simple forms of spatial reasoning in a purely
`linear' way, and so be much more efficient than the direct reduction to
coordinates, even if we do in fact have a specific dimension in mind. One
real example from formalizing complex analysis is the following in $\real^2$:
$$ |\|w - z\| - r| = d \And \|u - w\| < d/2 \And \|x - z\| = r
   \Imp d/2 \leq \|x - u\|.
$$

\section{Standard ML reimplementation of code}
\label{sec:code}

This section describes a reimplementation of the computer code 
used in the proof of the Kepler conjecture. 
The code has been substantially
redesigned to avoid various difficulties
with the original implementation.

\subsection{Code}

The original proof of the Kepler conjecture relies significantly on
computation. Computer code is used extensively and is central both to
the correctness of the result and to a thorough understanding of the
proof.

  There are four major difficulties with understanding and verifying
the original code base. The first and most glaring difficulty is
simply the amount of code. At the website~\cite{website:Hales:1998:Code}
that posts the code for the original proof
there are well over 50,000 lines of programs in Java, C++, and
Mathematica (among others). This represents only the calculations that Hales did himself.
Samuel Ferguson also completed many of the calculations with an entirely
different code base\footnote{There is a large amount of
code copying in Ferguson's code, resulting in a much larger code base.
The number of distinct lines is difficult to measure} of 137,000 lines of C. By
contrast, the proof of the four color theorem by Robinson \textit{et
al.}~\cite{Robertson:1997:JCTB} is less than 3,000 lines of C.

The second difficulty is in the organization of the code. The
calculations were done over the space of four years and involved
thousands of executions of a multitude of independent programs. 
Section~\ref{sec:intro} identifies  three main computational
tasks: tame graph generation, linear program
bounding, and nonlinear inequality verification.  Each of these main
tasks consists of several subtasks.  For example, verifying the
inequalities required dozens of relatively complicated preprocessing
phases where second derivatives of the relevant functions were bounded
over fixed domains.  As another example, many linear programs were
solved only after a branch and bound period which were recorded in
voluminous log files.  In an attempt to organize the complex 
web of calculations, Hales
devised a labeling scheme to uniquely identify the calculations.
However, even now some computations relied upon by the proof are
difficult to find in the original source code.  To locate, for
instance, computation ``\calc{821707685}'' from the original
proof~\cite[p.159]{Hales:2006:DCG}, one can search on the
website~\cite{website:Hales:1998:Code} in vain.  While records of the
computations were made, it is not always an easy matter to find them
without guidance from the authors.

The third difficulty lies in the complexity of the implementation. For
instance, the software developed to prove the inequalities upon which
the original proof rests is relatively complicated. Processing power
at the time (1994-1998) was just barely capable of completing the
computations requested. To keep the length of execution to days or
weeks instead of months or years, the code is extensively
optimized. The optimizations were often implemented without comment in
the source and in some cases are difficult to understand.

The fourth difficulty is that
the original code uses C and C++ to carry out \emph{interval
arithmetic} calculations based on \emph{floating point arithmetic}. In
the process, it explicitly sets the IEEE 754~\cite{IEEE:1985:IEE754}
rounding modes on the processor's floating point unit.  While floating
point is desirable for its speed, there are difficulties with using
floating point for software that requires a very high level of rigor
such as that supporting mathematical proof.  The first is that
reasoning about floating point instructions requires a relatively deep
understanding of the machine architecture~\cite{Monniaux:2008:TOPLAS}.
For instance, setting the rounding mode changes the state of the
processor itself. Such an instruction has a global effect on all
subsequent floating point computations.  In the original code base the
rounding modes are explicitly changed at least 400 times.  Moreover,
compilers, libraries, and even processors are notorious for unsound
implementations of the 754 standard.  


\subsection{Reimplementation}
\label{sec:sean}

In 2004 we decided to reimplement the original code
base.  We decided that the new implementation should not require 
floating point numbers and rounding modes.  Though speed
was important, we wanted the code to be independent of any particular
interval arithmetic implementation.  This meant we could use a fast
floating point implementation of interval arithmetic for our daily
work, but could use a slower but more trustworthy implementation such
as MPFI~\cite{Revol:2005:MPFI} to double check important computations.
We also wanted to organize the new implementation such that any of the
many computations upon which the proof relies could be evaluated from
a single interface.  This would allow Flyspeck developers to 
find and easily check the text of the proof during the formalization process.
Finally, we wished to bring the computational aspects of the
Kepler conjecture closer to the level of simplicity and clarity
necessary for formalization by a proof assistant.  We began this work
in the spirit of Robinson \textit{et
al.}~\cite{Robertson:1997:JCTB}, which simplified the original code of
Appel and Haken~\cite{Appel:1986:FourColor}, and was used by Gonthier
to construct the fully formal proof~\cite{gonthier:2008:formal} in the
Coq proof assistant.

We chose Standard ML for the reimplementation for a number of reasons.
It has a formal definition~\cite{Milner:1990:SML}, and thus programs
have a meaning apart from the particular compiler used.  It has an
efficient compiler, named MLton~\cite{website:MLton}.  (Our
reimplementation runs between 50\% and 200\% the speed of the original
implementation compiled with GCC.)  MLton has the ability to use
external libraries written in languages other than SML with relative
ease.  This allowed us, from one programming environment, to control multiple linear programming solvers,
interval arithmetic implementations, and nonlinear optimization
packages.
SML has an expressive module system, and thus it was simple to write
our code with respect to an abstract type of interval arithmetic. 
Thus we could use multiple independent implementations
with ease.  As of 2008, most of the code has been completely
rewritten in SML and is executable
from a single command-line program.  The code is freely available
at the project website~\cite{McLaughlin:2008:KeplerCode}.

In the original implementation, the myriad computations were done with
many different programs written in a half dozen programming languages.
The results of these computations are not always easy to find or
interpret.  Now all the computations are executed from the same
source, with organized output.  In addition to giving us added
confidence that the original computations were sound, we have a fairly
complete suite of software support for the Flyspeck project. We are
now in the process of organizing and reevaluating the thousands of
computations upon which the proof depends.

\section{Proving nonlinear inequalities with Bernstein bases}
\label{sec:zumkeller}

The hardest computational part of the original proof of the Kepler conjecture is the verification of a list of about a thousand nonlinear inequalities. This section presents a technique aimed at proving
them, based on polynomial approximation and Bernstein bases. We feel that
this approach better fits the requirements of formal proof, as outlined in
Section~\ref{sec:sean}.  We hope to refine the method to cover all Flyspeck
inequalities.

We exhibit the method on a single inequality~\calc{586468779}.
The original proof contains the following definitions \cite{sp1}:
\begin{align*}
  \pt &\coloneq - \frac \pi 3 + 4 \arctan \frac{\sqrt 2}5 \\
\doct &\coloneq \frac {\pi - 4 \arctan \frac{\sqrt 2}5}{2 \sqrt 2}\\
\Delta(y) &\coloneq \frac 1 2
  \left|
  \begin{array}{ccccc}
0 & 1 & 1 & 1 & 1 \\
   1 & 0 & y_3^2 & y_2^2 & y_1^2 \\
   1 & y_3^2 & 0 & y_4^2 & y_5^2 \\
   1 & y_2^2 & y_4^2 & 0 & y_6^2 \\
   1 & y_1^2 & y_5^2 & y_6^2 & 0
  \end{array}
  \right|\\
\asolid_0(y) &\coloneq y_1 y_2 y_3 + \sfrac 1 2 (
y_1^2 y_2 + y_1 y_2^2 + y_1^2 y_3 + y_2^2 y_3 + y_1 y_3^2 \\
&\qquad\qquad\qquad {} + y_2 y_3^2 - y_1 y_4^2 - y_2 y_5^2 - y_3 y_6^2)\\
\asolid_1(y) &\coloneq \asolid_0 (y_1, y_5, y_6, y_4, y_2, y_3)\\
\asolid_2(y) &\coloneq \asolid_0 (y_2, y_4, y_6, y_5, y_1, y_3)\\
\asolid_3(y) &\coloneq \asolid_0 (y_4, y_5, y_3, y_1, y_2, y_6)\\
\gamma(y) &\coloneq 
- \frac \doct 6 \sqrt {\Delta(y)} + \frac 2 3 \sum_{i=0}^3 \arctan \frac
{\sqrt{\Delta(y)}} {\asolid_i(y)}.
\end{align*}

The statement of the inequality is:
\begin{equation}
\forall y \in [2,2.51]^6.\; \gamma(y) \le \pt. \label{gamma-pt}
\end{equation}
Define the difference of two intervals by $[a_1,b_1]-[a_2,b_2] = [a_1-b_2,b_1-a_2]$.
Interval arithmetic is used to prove the inequalities in the original proof.
It
suffers from the \emph{dependency problem}: the minimum and maximum of the
formula $x-x$ are overestimated because $[a,b] - [a,b] = [a-b,b-a]$, although
$x-x$ is clearly $0$. Subdividing $[a,b]$ into $[a,\sfrac{a+b}2]$ and
$[\sfrac{a+b}2,b]$, and then re-evaluating the formula yields an improved
result. However, depending on the problem, the number of required subdivisions
can be very large. This is why checking some inequalities takes a very long
time.

\subsection{From geometrical functions to polynomials}
Fortunately, better methods than interval arithmetic are available, if the
function under consideration is polynomial. A quick look at $\gamma$ tells us
that \eqref{gamma-pt} is not polynomial, since it has occurrences of
$\sqrt\cdot$, $1/\cdot$, and $\arctan$. Can it nevertheless be reduced
to a polynomial problem? Two strategies come to mind:

First, algebraic laws such as $\sqrt a \le b \Leftrightarrow a \le b^2$ (if $b
\ge 0$) and $\frac a b \le c \Leftrightarrow a \le bc$ (if $b>0$) can often be
used to eliminate occurrences of $\sqrt\cdot$ and $1/\cdot$. The list of
trigonometric identities is endless. For our example, Vega's rule $\arctan a +
\arctan b = \arctan \frac {a + b} {1 - ab}$ seems useful. Unfortunately, this
technique quite often yields huge expressions that are difficult to deal with by
virtue of their sheer size. Also, an algebraic transformation to a polynomial problem may
simply be impossible (we suspect that this is the case for \eqref{gamma-pt}).

A second technique is based on replacing $\gamma$ with a polynomial $g$ that
dominates it, but is still smaller than $\pt$. Clearly, if there exists a $g$
such that
\begin{equation}
\forall y \in [2,2.51]^6.\; \gamma(y) \le g(y) \label{gamma-g}
\end{equation}
and
\begin{equation}
\forall y \in [2,2.51]^6.\; g(y) \le \pt, \label{g-pt}
\end{equation}
then by transitivity \eqref{gamma-pt} holds.

Such a polynomial $g$ can be obtained by replacing $\sqrt{\cdot}$, $1/\cdot$ and
$\arctan$ with polynomial approximations. We only need to ensure that we use
upper approximations for positive occurrences and lower approximations for
negative ones. Only occurrences whose arguments contain variables need to be
replaced, since e.g. $\sqrt 2$ is a (constant) polynomial itself.

In the definition of $\gamma$ the function $\arctan{}$ occurs positively, so it
is replaced by an upper approximation $\overline{\arctan}$. The term
$\frac{\sqrt{\Delta(y)}} {\asolid_i(y)}$ is first unfolded to $
{\sqrt{\Delta(y)}}~\cdot~\frac1{\asolid_i(y)}$. Both the square root and
reciprocal occur positively again here, so they can be replaced by upper
approximations $\overline\sqroot$ and $\overline\rcp$, respectively. This yields
$\overline{\arctan} (\overline\sqroot(\Delta(y)) \cdot
\overline\rcp(\asolid_i(y)))$ in all four summands. There remains only 
$\sqrt\cdot$ occurring negatively after $-\frac \doct 6$, which is to be
replaced by a lower approximation $\underline\sqroot$.

We choose the following approximations:
\begin{align*}
\overline{\arctan}(t) &\coloneq \arctan \frac{\sqrt 2}5 + \frac {25} {27}\left(t - \frac{\sqrt 2}5\right)\\
\overline\rcp(t) &\coloneq \frac 1 4 - \frac {37 t} {1600} + \frac {t^2} {1000}- \frac{13 t^3} {640000} + \frac {t^4} {6400000}\\
\underline\sqroot(t) &\coloneq 8 \sqrt 2 + \frac 3 {64 (\pi - 4 \arctan \frac{\sqrt 2}5)} (t - 128)\\
\overline\sqroot(t) &\coloneq 8 \sqrt 2 + \frac 1 {16 \sqrt 2}(t -128).
\end{align*}

These approximations are valid on the domain~\eqref{gamma-pt}. For example,
$$\forall t \in\Delta ([2,2.51]^6).\; \sqrt{t} \le \overline\sqroot(t).$$ This
can be established by elementary means, knowing that $\Delta ([2,2.51]^6)
\subseteq [128;501]$. The latter can be shown automatically by the method
outlined in the next subsection.

In summary, we arrive at the following definition of $g$:
\begin{align*}
g(y) &\coloneq 
- \frac \doct 6 \underline\sqroot (\Delta(y)) + \frac 2 3 \sum_{i=0}^3 
\overline{\arctan} (\overline\sqroot(\Delta(y)) \cdot
\overline\rcp(\asolid_i(y))).
\end{align*}
In form, it is almost identical to the definition of $\gamma$.
Our construction of $g$ therefore ensures \eqref{gamma-g}. Moreover, the
approximations were chosen (using polynomial interpolation) in a way such that
\begin{equation}
\gamma (2,2,2,2,2,2) = g (2,2,2,2,2,2) = \pt. \label{eq-gamma-g-pt}
\end{equation}
This is important, because otherwise~\eqref{g-pt} cannot hold.

\subsection{Bounding polynomials}
\label{bernstein}
In order to prove~\eqref{gamma-pt}, it remains to be shown that $g(y) \le \pt$.
This can be done with the help of Bernstein polynomials. We briefly outline the
case of a single variable $x$ here.

The $i$th Bernstein basis polynomial of order $k$ is defined as
$$\bstein^k_i(x) \coloneq {\binom{k}{i} }
x^i(1-x)^{k-i}.$$
For a polynomial $p$ and a vector $b \in \mathbb R^k$, if
$$p(x) = \sum_{i=0}^k b_i \cdot \bstein^k_i(x),$$
then $b$ is called the \emph{Bernstein representation} of $p$. In this case
$$\forall x \in [0;1].\; p(x) \le \max_i b_i.$$

This property is tremendously useful: it gives us an upper bound on $p$, namely
the largest coefficient of $p$'s Bernstein representation. By a change of
variable we can reduce any interval to $[0,1]$. The generalization to the
multivariate case is straightforward~\cite{garloff}, \cite{roland-thesis}.

In order to bound a polynomial it thus suffices to convert it into Bernstein
representation. This can be done by a matrix multiplication (the Bernstein basis
of order $k$ forms a basis of the vector space of all polynomials of degree up
to $k$). For practical purposes it is however crucial to use a more efficient
algorithm (cf. \cite{garloff}, \cite{roland-thesis}).

Note that $g$ contains irrational coefficients. This is a consequence of
requirement~\eqref{eq-gamma-g-pt} and cannot be avoided. However, we were able
to choose the approximation polynomials in a way such that the transcendental
parts can be factored out (hence the occurrence of $\pi - 4 \arctan \frac{\sqrt
  2}5$ in the definition of $\underline\sqroot$). As can be easily checked with
symbolic algebra software, the polynomial $p(y) \coloneq \sqrt 2 (g(y) - \pt)$
has rational coefficients! It can thus be converted to a Bernstein
representation without rounding, using the algorithm presented in
\cite{roland-thesis}. With this method the only divisions are by powers of 2,
which can be efficiently represented using dyadic numbers.

The polynomial $p$ consists of 12945 monomials and has total degree
18. A prototype implementation in Haskell returns $0$ as the maximum
for $p$ in about half a minute. Thus $\sqrt 2 (g(y) - \pt) \le 0$ and
$g(y) \le \pt$.

\section{Tame graph enumeration}

\label{sec:graph}

Tame graphs are particular plane graphs that represent potential
counterexamples to the Kepler conjecture. The \emph{Archive} is a list of over 5000 plane
graphs.  The original proof generates the Archive with the help of a Java program that
enumerates all plane graphs. Tameness is defined
in Section~18 and the enumeration is sketched in Section~19 of
\cite{Hales:2006:DCG}.
This section sketches the formally machine-checked proof of Claim~3.13 and
Theorem 19.1 in the original proof~\cite{Hales:2006:DCG}:
\begin{thm}\label{Archive:complete}
Any tame plane graph is isomorphic to a graph in the Archive.
\end{thm}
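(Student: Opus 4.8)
The plan is to replace the original Java enumeration by a purely functional enumeration program written inside a proof assistant (Isabelle/HOL), to prove formally that this program is \emph{complete} --- every tame plane graph is isomorphic to one of its outputs --- and then to \emph{execute} the program via (verified) code generation and check that every graph it produces already occurs, up to isomorphism, in the Archive. Chaining these two facts yields the theorem.

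First I would fix a concrete combinatorial representation of plane graphs (for instance, a graph as a finite list of faces, each face a cyclic list of vertices, together with the bookkeeping distinguishing final from not-yet-final faces) and transcribe the definition of tameness from Section~18, clause by clause, into this setting, obtaining a decidable predicate $\mathrm{tame}$. In parallel I would implement the face-by-face generation procedure of Section~19: starting from a single $n$-gon seed for each admissible $n$, repeatedly select a not-yet-final face and enumerate the ways of completing or subdividing it, discarding any partial graph that already violates one of a fixed list of necessary conditions for tameness (degree bounds, the admissible-weight/score bookkeeping, forbidden local configurations, and so on). This produces a function yielding a finite list $\mathit{TameEnum}$ of plane graphs.

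The mathematical heart of the argument is the completeness lemma: if $g$ is a tame plane graph, then $g$ is isomorphic to some element of $\mathit{TameEnum}$. I would prove it by exhibiting, for such a $g$, a generation sequence that the algorithm could actually follow --- peeling faces off $g$ in the order the algorithm would consider them --- and checking by induction on the length of this sequence that (i) every intermediate partial graph survives all the pruning tests, because each test is a genuine consequence of $g$ being tame together with the remaining faces being addable, and (ii) the procedure terminates, because tameness bounds the number and sizes of faces. Verifying that each individual pruning criterion is sound in this precise sense --- in particular those involving the admissible-weight assignment and the score estimates --- is the most delicate and laborious part, since tameness has many interacting clauses and the monotonicity of the partial bounds must be argued carefully. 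This is where I expect the main obstacle to lie: getting the loop invariants exactly right, and keeping the formal development manageable rather than letting the case analysis explode.

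Finally I would discharge the computational half. Using Isabelle's code generator, $\mathit{TameEnum}$ and $\mathrm{tame}$ are compiled to executable ML and run, producing the actual list of generated graphs; to compare this list against the Archive efficiently I would compute, for each graph, a canonical form (or at least an isomorphism-invariant fingerprint, proved invariant under the combinatorial isomorphism relation), filter the generated list by $\mathrm{tame}$, and verify that each surviving fingerprint appears among the fingerprints of the Archive entries. Soundness of this check reduces the desired containment to the already-proved completeness lemma together with correctness of the fingerprint, so no further mathematics is needed beyond engineering the enumeration (sharing, aggressive early pruning, bucketed lookup) so that it is feasible in practice and trusting, or separately verifying, the code-generation step.
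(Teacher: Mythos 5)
Your proposal matches the paper's approach essentially step for step: formalize the face-by-face enumeration with pruning in Isabelle/HOL, prove a completeness lemma (the paper's Theorem~\ref{thm:TameEnum_comp}) by induction on the generation sequence, then discharge the remaining containment by executing the enumeration inside the logic and comparing against the Archive up to (a verified test for) graph isomorphism. The only cosmetic difference is that you sketch the isomorphism check via a canonical form/fingerprint, whereas the paper simply reports a verified executable isomorphism test, but this does not change the structure of the argument.
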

There are two potential reasons why an error in the original proof of this theorem might have gone undetected:
the publications only sketch the details of the enumeration, and the
referees only made a passing glance at the implementation, consisting of more than 2000
lines of Java.

We recast the Java program for the enumeration of all tame graphs in
logic, proved its completeness with the help of an interactive theorem prover,
ran it, and compared the output to the
Archive.  It turns out that the original proof was right, the Archive is complete,
although redundant (there are at most 2771 tame graphs).
Doing all this inside a logic and a theorem prover requires two things:
\begin{itemize}
\item The logic must contain a programming language.
We used Church's \emph{higher-order logic} (HOL) based on $\lambda$-calculus,
the foundation of functional programming. Programs in HOL are simply
sets of recursion equations, i.e.\ pure logic.
\item The programming language contained in the logic must be efficiently
executable and such executions must count as proofs. The theorem prover that we used, Isabelle/HOL~\cite{LNCS2283} fulfills this criterion. If all functions
that appear in a term are either data, e.g.\ numbers, or functions defined
by recursion equations, Isabelle/HOL offers the possibility to evaluate this term $t$ in one big and relatively efficient step to a value $v$, giving rise to the theorem $t = v$.
\end{itemize}
The enumeration of all tame graphs generates 23 million plane graphs ---
hence the need to perform massive computations in reasonable time.

Now we give a top-level overview of the formalization and proof of
completeness of the enumeration of tame graphs in HOL. For details
see~\cite{NipkowBS-IJCAR06}. The the complete machine-checked proof, over
17000 lines, is available online in the Archive of Formal Proofs at
\url{afp.sf.net}~\cite{BauerN-AFP06}.

\subsection{Plane graphs}

Following the original proof, we represent finite, undirected, plane graphs as lists
(= finite sets) of faces and faces as lists of vertices. Note that by
representing faces as lists they have an orientation. The enumeration of
plane graphs requires an additional distinction between \emph{final}
and \emph{non-final} faces. Hence a face is really a pair of a list
of vertices and a Boolean.
A plane graph is \emph{final} iff each of its faces is.
In final graphs we can ignore the Boolean component of the faces.

\subsection{Enumeration of plane graphs}

The original proof characterizes plane graphs by an executable enumeration and sketches a
proof of completeness of this enumeration. We have followed the original proof and taken
this enumeration as the definition of planarity.  The enumeration of plane
graphs in the original proof proceeds inductively: you start with a seed graph with two faces, the
final outer one and the (reverse) non-final inner one. If a graph contains a
non-final face, it can be subdivided into a final face and any number of
non-final ones.  Because a face can be subdivided in many ways, this process
defines a tree of graphs. By construction the leaves must be final graphs,
and they are the plane graphs we are interested in: any plane graph of $n$
faces can be generated in $n-1$ steps by this process, adding one (final)
face at a time. For details see~\cite{Hales:2006:DCG} or
\cite{NipkowBS-IJCAR06}.

The enumeration is parameterized by a natural number $p$ which controls the
maximal size of final faces in the generated graphs. The seed graph
\textit{Seed$_p$} contains two $(p+3)$-gons and the final
face created in each step may at most be a $(p+3)$-gon. As a result,
different parameters lead to disjoint sets of graphs.

The HOL formalization defines an executable function
\textit{next-plane$_p$} that maps a
graph to a list of graphs, the successor graphs reachable by subdividing one
non-final face.  The plane graphs are the final graphs reachable from
\textit{Seed$_p$} via \textit{next-plane$_p$} for some $p$.

\subsection{Enumeration of tame graphs}

The definition of tameness in the original proof is already quite close to a direct logical
formulation. Hence the HOL formalization is very close to this. Of course
pictures of graphs had to be translated into formulae, taking implicit
symmetries in pictures into account. We found one simplification: in the
definition of an admissible weight assignment one can drop condition 3
(a condition on the $4$-circuits in graphs)
without changing the set of tame graphs. What facilitated our work
considerably was that a number of the eight tameness conditions 
in the original proof are directly
executable. The details are described elsewhere~\cite{NipkowBS-IJCAR06}.

The enumeration of tame graphs is a modified enumeration of plane graphs
where we remove final graphs that are definitely not tame, and prune the
search tree at non-final graphs that cannot lead to tame graphs anymore.  
The published
description~\cite{Hales:2006:DCG} is deliberately sketchy, and 
the precise formulation of the pruning criteria is based on the
original Java programs.  This
is the most delicate part of the proof because we need to balance
effectiveness of pruning with simplicity of the completeness proof: weak
pruning criteria are easy to justify but lead to unacceptable run times of
the enumeration, sophisticated pruning techniques are difficult to justify
formally. Since computer-assisted proofs are still very laborious,
simplifying those proofs was of prime importance. In the end, the HOL
formalization defines a function \textit{next-tame$_p$}
from a graph to a graph list. It computes the list of plane successor graphs
\textit{next-plane$_p$ g} and post-processes it as follows:
\begin{enumerate}
\item Remove all graphs from the list that cannot lead to tame graphs
because of lower bound estimates for the total admissible weight of the final
graph.
\item Finalize all triangles in all of the graphs in the list
(because every 3-cycle in a tame graph must be a face).
\item Remove final graphs that are not tame from the list.
\end{enumerate}
A necessary but possibly not sufficient check for tameness is used in the last
step. Hence the enumeration may actually produce non-tame graphs. This is
unproblematic: in the worst case a fake counterexample to the Kepler
conjecture is produced, but we do not miss any real ones.

Although we have roughly followed the procedure of the original proof, we have simplified it in
many places. In particular we removed the special treatment of
\textit{Seed$_0$} and \textit{Seed$_1$}, which is a fairly intricate
optimization that turned out to be unnecessary.

The following completeness theorem is the key result:
\begin{thm}\label{thm:TameEnum_comp}
If a tame and final graph $g$ is reachable from
\textit{Seed$_p$} via \textit{next-plane$_p$}
then $g$ is also reachable from \textit{Seed$_p$} via \textit{next-tame$_p$}.
\end{thm}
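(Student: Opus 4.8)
The statement we need is Theorem~\ref{thm:TameEnum_comp}: every tame and final graph reachable from \textit{Seed}$_p$ via \textit{next-plane}$_p$ is also reachable via \textit{next-tame}$_p$. The natural strategy is induction on the length of the reachability derivation in the \textit{next-plane}$_p$ tree, combined with a monotonicity/soundness argument for each of the three post-processing steps that turn \textit{next-plane}$_p$ into \textit{next-tame}$_p$. First I would set up the key invariant: for every graph $h$ (final or not) lying on a \textit{next-plane}$_p$-path from \textit{Seed}$_p$ to our fixed tame final graph $g$, the graph $h$ is not pruned by step~(1), i.e.\ $h$ still admits a completion to a tame final graph, namely $g$ itself. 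This is exactly where the lower-bound estimate for the total admissible weight must be shown to be a genuine lower bound: if $g$ is tame and reachable from $h$, then the weight bound attached to $h$ cannot exceed the admissible total weight of $g$, so the pruning test at $h$ fails and $h$ survives.

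**Key steps, in order.** (i) Prove the weight-bound lemma: the estimate used in step~(1) is a correct lower bound on the admissible weight of any final graph reachable below the current node; hence a node on the path to a tame $g$ is never removed in step~(1). (ii) Handle step~(2), the finalization of triangles: show that finalizing all $3$-cycles is harmless on the path to $g$, because every $3$-cycle in the tame graph $g$ is already a face of $g$ (a tameness condition), so the triangle-finalized successor still lies on a \textit{next-plane}$_p$-path to $g$ — one must check that finalizing a triangle commutes appropriately with the remaining subdivision steps, i.e.\ that the subdivision which would eventually make that triangle a face can be taken first without loss. (iii) Handle step~(3): since $g$ itself is tame, it passes the necessary tameness check used to filter final graphs, so $g$ is not discarded. (iv) Assemble the induction: the seed case is immediate since \textit{Seed}$_p$ is the common root; for the inductive step, given $g$ reachable from \textit{Seed}$_p$ in $n$ steps via \textit{next-plane}$_p$, take the first step to a graph $h$, apply (i)–(iii) to see $h$ is a \textit{next-tame}$_p$-successor of \textit{Seed}$_p$ (after the triangle finalization, which may shorten or lengthen the residual path but keeps $g$ reachable), and invoke the induction hypothesis on the shorter residual derivation from $h$ to $g$.

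**Main obstacle.** The delicate point is step~(2) together with the bookkeeping of the induction measure. Finalizing all triangles is not literally a single \textit{next-plane}$_p$ step, so one cannot just say ``$h$ is a successor of \textit{Seed}$_p$''; rather one must argue that the triangle-finalized graph is still reachable and that $g$ is still reachable from it, which requires a commutation argument showing that among all ways to complete $h$ to $g$ one may always choose to realize the forced triangle faces first. Pinning down the right well-founded measure (number of remaining non-final faces, or number of subdivision steps left to reach $g$) so that the induction goes through cleanly after this reshuffling is where the real work lies; the weight-bound lemma (i), while conceptually the heart of why step~(1) is sound, is comparatively routine once the admissible-weight machinery of the tameness definition is in place. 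I would also flag that the precise pruning criteria are taken from the original Java code, so part of the effort — though not mathematically deep — is verifying that the formal \textit{next-tame}$_p$ faithfully captures those criteria and that the ``necessary but possibly not sufficient'' tameness check in step~(3) genuinely accepts all tame graphs.
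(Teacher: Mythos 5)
The paper does not present a prose proof of Theorem~\ref{thm:TameEnum_comp}; it states the result and delegates the argument entirely to the 17000-line machine-checked Isabelle/HOL development cited in~\cite{NipkowBS-IJCAR06} and~\cite{BauerN-AFP06}, noting only that justifying the pruning criteria is ``the most delicate part of the proof.'' Your plan---showing each of the three post-processing steps of \textit{next-tame}$_p$ is sound on any path to a tame final $g$ (a genuine lower-bound lemma for the weight pruning in step~(1), the 3-cycle-is-a-face tameness condition for step~(2), and the necessity of the final tameness check for step~(3)), together with a commutation/reordering argument for triangle finalization and a well-founded measure that tolerates the reshuffled derivation---matches the structure the paper's description of \textit{next-tame}$_p$ implicitly demands and correctly pinpoints the same delicate points the paper flags, so this is essentially the same approach.
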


Each step \textit{next-tame$_p$} is executable and an exhaustive enumeration
of all graphs reachable from a seed graph is easily defined on top of it. We
call this function \textit{tameEnum$_p$}. By definition, tame graphs may
contain only triangles up to octagons, which corresponds to the parameters $p
= 0,\dots,5$.

\subsection{Archive}

In order to build on the above enumeration of all tame graphs without having
to rerun the enumeration, the results of running \textit{tameEnum$_p$} with
$p = 0,\dots,5$ are put into an Archive and isomorphic graphs are
eliminated. This results in 2771 graphs, as opposed to 5128 in the original proof. The
reasons are twofold: there are many isomorphic copies of graphs in the
Archive and it contains a number of non-tame graphs, partly because, for
efficiency reasons, the original proof did not enforce all tameness conditions in its Java
program. The new reduced Archive is also available
online~\cite{BauerN-AFP06}.

Finally we can prove Theorem~\ref{Archive:complete}: if $g$ is tame plane
graph, Theorem~\ref{thm:TameEnum_comp} and the definition of
\textit{tameEnum} tell us that $g$ must be contained in \textit{tameEnum$_p$}
for some $p=0,\dots,5$. Hence it suffices to enumerate \textit{tameEnum$_p$},
$p=0,\dots,5$, and check that, modulo graph isomorphism, the result is the
same as the Archive. This is a proposition that can be proved by executing it
(because the HOL formalization also includes a verified executable test for
graph isomorphism which we do not discuss).

\section{Verifying linear programs}
\label{sec:lp}

This section reports on the current state of the formal verification of
the linear programming part of the proof of the Kepler conjecture.
The results of the linear programming in the original proof are recorded
in several gigabytes of log files.
This section presents the formalization of the generation and bounding of these linear programs 
in the mechanical proof assistant Isabelle~\cite{LNCS2283}. 
A more detailed version of the material presented here can be found in S. Obua's thesis~\cite{obua:phd}.

This formalization relies on the archive of tame graphs. Each tame graph (except the graphs associated with the face-centered cubic and hexagonal close packings) represents a potential counterexample to the Kepler conjecture. 
Each potential counterexample  obeys certain constraints.  The original
proof refutes each potential counterexample by building a linear program
from the constraints and showing that these linear programs are infeasible.

A structure, which we call a \emph{graph system}, makes this precise
in a formal environment.
An instance of a graph system is a tame graph which obeys the constraints listed in the definition of a graph system.
The current formalization does not use all of the constraints of the original proof but only those that do not require branch and bound strategies. Our current notion of graph system can be viewed
as a detailed formalization of what is called \emph{basic linear programs} in~\cite[\S 23.3]{Hales:2006:DCG}. 
Because the current formalization does not capture not all constraints of the original proof, we cannot hope to refute all potential counterexamples. Nevertheless we manage to refute most of them.

We represent tame graphs as \emph{hypermaps}~\cite{gonthier:2008:formal}, \cite{hales:2008:blueprint}. As mentioned in Section~\ref{sec:blueprint}, 
a hypermap is just a finite set $D$ of \emph{darts} together
with three permutations on $D$: the edge, the face, and the node permutation that compose to the identity $e\circ n\circ f = I$.   See Figure~\ref{fig:hypermap}.
This representation greatly simplifies
the axiomatization of a graph system. For a detailed description of how we represent 
hypermaps and for a complete list of all of the axioms of a graph system see~\cite[\S4]{obua:phd}.

\begin{figure}
\begin{center}
\includegraphics[width=12cm]{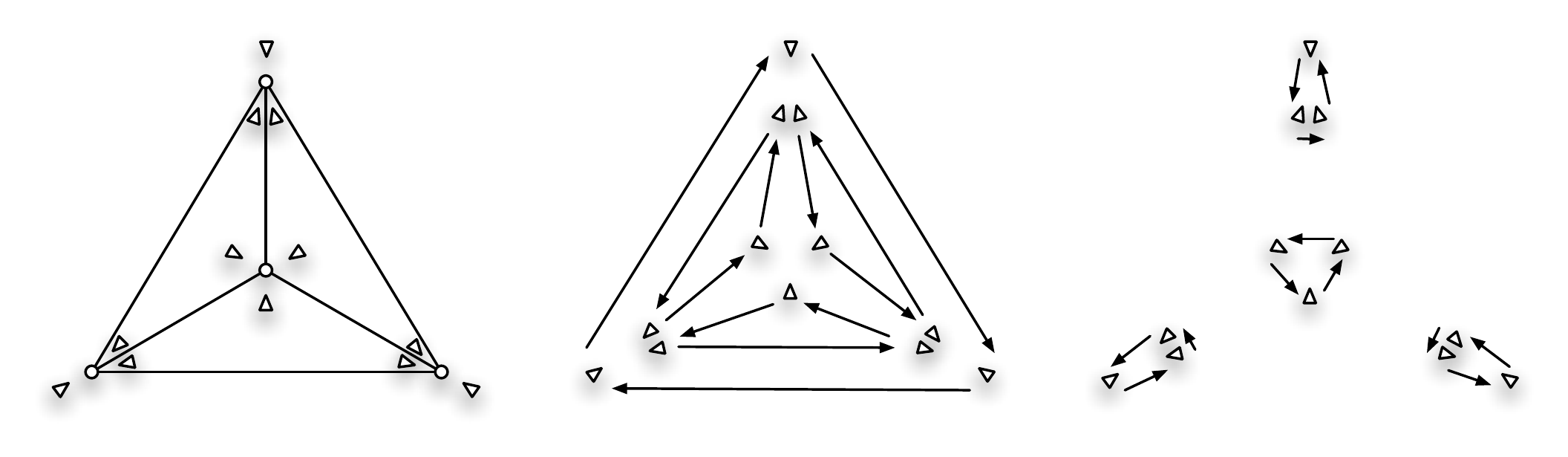}
\end{center}
\caption{A hypermap is a combinatorial structure attached to a planar graph.  A  dart, represented as a small triangle, is place at each face angle of the graph.  In the second frame, the face permutation $f$ cycles through the darts in each face.  In the third frame, the node permutation $n$ cycles through the darts at each node.  The edge permutation $e$ (not shown) is an involution exchanging darts from opposite ends of the edge of a graph. (This figure has ben reproduced from~\cite{Hales:2008:Dodec}.)}
\label{fig:hypermap}
\end{figure}

Figure~\ref{fig:lpapproach} summarizes how we generate and solve the linear programs.
We apply the axioms of a graph system to each tame graph. This results in a large Isabelle theorem which is a conjunction of
linear equalities and inequalities. We then normalize this conjunction
to bring it into the form of a matrix inequality
\begin{equation}
A  x \leq b.
\end{equation}
The entries of $A$ and $b$ are symbolic expressions which contain various real constants (such as $\pi$) that are needed to
define the axioms of a graph system. In order to apply linear programming, 
we need to replace $A$ and $b$ with numerical approximations. We achieve this via a formalization of 
interval arithmetic in Isabelle, and arrive at numerical matrices $A'$, $A''$ and $b'$ for which we have the formally proven 
Isabelle theorems 
\begin{equation}
A' \leq  A  \leq A'', \quad b \leq b'.
\end{equation}
We then apply a simple preprocessing step which gives us formally proven a-priori bounds $x'$ and $x''$ for $x$ such
that 
\begin{equation}
x' \leq x \leq x''.
\end{equation}
It is then possible to obtain a certificate from an external linear programming solver like GLPK (Gnu Linear Programming Kit) that allows us to formally reach a contradiction from $A x \leq b$. The beauty of a certificate is 
that we can use results obtained from an untrusted source (in our case this untrusted source is a 
heavily optimized linear program solver programmed in C) in a trusted and completely mechanically verifiable way.

In this way, we proved the inconsistency of 2565 of the graph systems but failed to prove the inconsistency of the remaining 206. 
This yields a success rate of about $92.5\%$. Future work will extend the notion of graph system and  generate linear programs
that take all the constraints of the original proof into account.

\begin{figure}
\begin{center}
\includegraphics[width=10cm]{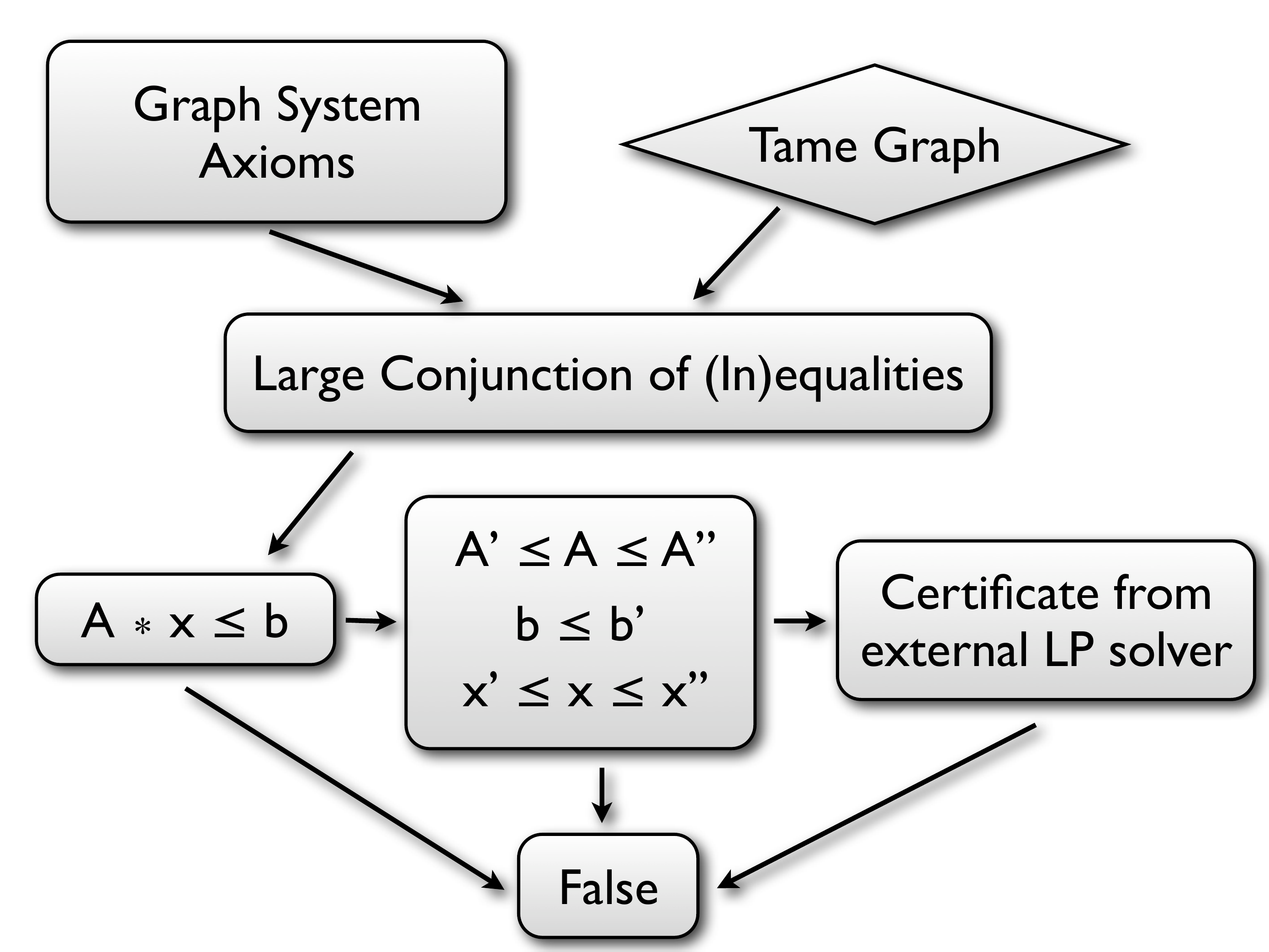}
\end{center}
\caption{Refuting a potential counterexample to the Kepler conjecture}
\label{fig:lpapproach}
\end{figure}

\part{Addendum to and Errata in the Original Proof}

\section{Biconnected graphs}
\label{sec:biconnected}

This section gives further detail to the argument of \cite[\S12.7~p.131]{Hales:2006:DCG:4}.  There it is claimed that
the proof of the main estimate~\cite[Theorem~12.1]{Hales:2006:DCG:4} can be reduced to the case of polygonal standard regions.
This claim is correct. 
However, the justification of this claim is not complete in the original proof.
This section gives complete justification of the claim.   The main result is Theorem~\ref{thm:biconnected}, stated below.

This section patches the original proof.  As such, it should be imagined this section to be inserted as an addendum, directly following Section~12.13 in the original proof, as a new Section~12.14.  (In fact, Section~12.13 is itself an addendum to the 1998 preprint, making this section the second addendum to Section~12.)  If the actual insertion were to be carried out, the corresponding changes to the numbering of the results in this section would be made: Theorem~\ref{thm:biconnected} below would be inserted as a new Theorem~{12.21}, and so forth. 

In the original proof, the boundaries of standard regions may fail to be
simple polygons.  In fact, the failure of the boundaries to be polygons may be quite
severe: the boundaries may contain multiple components and articulation vertices.
An {\it articulation}
vertex in a graph is a vertex whose removal increases the number of connected
components of the graph.  A connected graph is {\it biconnected} if it contains no
articulation vertex.

In the original proof, each standard region is first prepared by a process
of erasure (described in \cite[\S11.1]{Hales:2006:DCG:4}), then deformed in a way
to transform its boundary into a simple polygon.  The deformation
is made in such a way that the values of 
two key functions $\op{vor}_{0,R}$ and $\tau_{0,R}$ are left unchanged.
These are the functions that enter into the main estimate~\cite[Theorem~12.1]{Hales:2006:DCG:4}.

What the original proof fails to consider
is a particular hypothetical situation where the deformation might fail.  This situation is illustrated in Fig.~\ref{fig:biconnected}, where the rigid movement of set of vertices (such as the illustrated triangle with vertex $w$)
is blocked by a nearby vertex $v$ that is not visible from $w$, when the distance between $v$ and $w$ drops
to the minimum value $2$. This section presents a proof that this hypothetical situation does
not occur.

 The strategy of the proof in this section is to use a deformation argument.  A general decomposition star is deformed until the graph attached to it becomes biconnected.  In a biconnected planar graph with at least three vertices, each face is a simple polygon.  Thus, by producing a biconnected graph, we achieve our objective.  
In the original proof of the Kepler conjecture, biconnected graphs are not
mentioned by name.   Nevertheless, most of the graphs that occur in the late stages
of the original proof are biconnected.

In more detail, the proof in this section will produce a sequence of
admissible deformations of a decomposition star $D$ in such a way that
the individual standard regions $R$ are preserved in number and identity (but
not in shape) by the deformations.
The deformations will preserve the values of $\op{vor}_{0,R}(D)$ and
$\tau_{0,R}(D)$, for each standard region $R$. 
The deformations will change the combinatorial structure of the graph
formed by all the subregions of the decomposition star. 
At the conclusion of the deformation, the decomposition star $D$ will have
a graph (formed by all the subregions of the decomposition star) 
that is biconnected.  As in the original proof, the decomposition star
is first simplified by a process of erasure, before the deformations begin.

In the remainder of this section, 
we adopt notation, definitions, and conventions without further comment
from \cite{Hales:2006:DCG:4}.  

\begin{figure}
\begin{center}
\includegraphics[width=5cm]{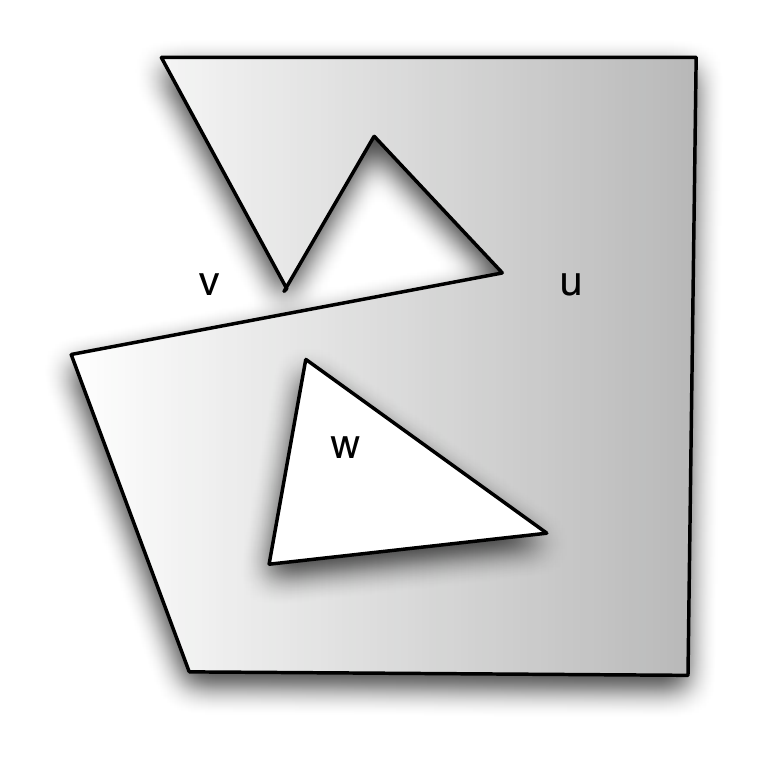}
\end{center}
\caption{In this figure (not to scale), segments represent geodesic arcs on the unit sphere.  The shaded region is a subregion whose boundary is not a simple polygon.
We wish to rigidly slide the triangle containing the vertex $w$ until a new visible distinguished edge forms
(say from $w$ to $u$).  We will show that this rigid motion does not decrease the distance between
two vertices (say $w$ and $v$)  to the minimum distance $2$.}
\label{fig:biconnected}
\end{figure}

\subsection{Context}

We work in the following restrictive
context for Theorem~\ref{thm:biconnected} and its proof.  
We fix a packing centered at a vertex at the origin.
As described in Section~12.6 of the original proof, we assume that all upright quarters
are erased, except loops (that is, those surrounded by anchored simplices).    
Let $U$ be the set of (non null) vertices of of height at most $2.51$.  As usual, we say two edges $\{u_1,u_2\}$ and $\{u_1',u_2'\}$ {\it cross}
if the interiors of the triangles formed by $\{0,u_1,u_2\}$ and $\{0,u_1',u_2'\}$ intersect.

We form the set of edges $E'$ between vertices in $U$, consisting of
\begin{itemize}
\item all standard edges; that is, $\{v,w\}\subset U$ such that $0<\|v-w\|\le 2.51$.
\item all edges $\{v,w\}\subset U$ of an anchored simplex, whenever the upright diagonal of the anchored simplex is an unerased loop.
\item all edges $\{v,w\}\subset U$ such that $0<\|v-w\|\le\sqrt8$, where $\{v,w\}$ does not cross any other edge in previous two items.  (If two of these edges cross, pick only one of them. This can only happen with conflicting diagonals
of a quad cluster.)
\end{itemize}
These edges do not cross.  A {\it special simplex} $\{0,u,v,w\}$ has one edge $\{v,w\}$ of length at least $\sqrt8$,
called the {\it special edge}.  
A special edge has length at most $3.2$.
The other vertex $u$ is called a {\it special vertex} (or {\it special corner}).
Let $E=E'\setminus S$, where $S$ is the set of special edges (that
is, the edges of special simplices shared with an anchored simplex).
 The projection of the line segments formed by $E$ to the
unit sphere is a planar graph.
The complement of this graph in the unit sphere
is a disjoint union of connected components.  The closures of these connected components are called {\it subregions}.

We call a {\it loop subregion} one that contains an unerased loop $\{0,v\}$.
If $R$ is a loop subregion, then there are no enclosed vertices of height $\le 2.51$ over the
subregion.  The corners of $R$ are the anchors of the upright diagonal together with the special corners
of the subregion.  The subregion $R$ is star convex with center point at the projection of $v$ to the unit sphere.
It follows that the boundary of $R$ is a simple polygon.

The graph $\Gamma$ with vertices $U$ and edges $E$ is not necessarily connected.  The aim is to deform
$U$ (and $D$) to create a biconnected graph (without changing the values of $\op{vor}_{0,R}(D)$ and $\tau_{0,R}(D)$ for {\it standard regions} $R$; the values for individual subregions will change).  Once the graph $\Gamma$
is biconnected, the subregions are simple polygons as desired.

The deformation of $U$ is {\it admissible} if it satisfies the following three conditions.
\begin{itemize}
\item If the graph $\Gamma$ is not connected,
the deformation acts by a rotation about the origin 
on the vertices of a single chosen connected component of $\Gamma$,
leaving all other vertices of $U$ fixed.  (For example, in Figure~\ref{fig:biconnected}, the entire triangle with vertex $w$ may be rotated.)  In particular,
$\|v\|$, for $v\in U$, is constant.
\item If the graph $\Gamma$ is connected, but not biconnected, with a chosen articulation vertex $a$, then
the deformation acts by a rotation about the axis $\{0,a\}$ on the vertices of a single component of $\Gamma\setminus\{a\}$,
leaving all other vertices of $U$ fixed.
\item The distance between $v,w$ remains at least $2$, for all $v,w\in U$.
\end{itemize}

The deformation stops when either of the following {\it halting conditions} are met:
{
\renewcommand{\labelenumi}{(H{\theenumi})}
\begin{enumerate}
\item $\|v-w\|\le\sqrt8$, where the edge $\{v,w\}\not\in E'$, with $\{v,w\}\subset U$, does not cross any edge in $E'$, or
\item $\|v-w\|$ decreases to $2$ for some $v,w\in U$, and $\{v,w\}$ crosses an edge in $E'$.
\end{enumerate}
}

We warn that the deformation is allowed to assume configurations in which the distance between some $u\in U$ and the upper endpoint of an unerased upright diagonal $d$ is less than $2$.  This is not a problem because the vertex $u$ does not end up as a corner of the loop subregion to which the upright diagonal $d$ belongs.  (In particular, the halting condition (H1) prevents a new special vertex from being added to a loop subregion.) Thus, when a single subregion is considered in isolation, all distances between pairs of vertices in that subregion are at least $2$.

In the restrictive context that has been described in this section, we have the following result.

\begin{thm}\label{thm:biconnected}  
If the graph $\Gamma=(U,E)$ is not biconnected, then a nontrivial admissible
deformation 
of $U$ exists. 
The  halting condition (H2) never holds.  The admissible
deformation can always be continued until the halting
condition (H1) holds.
\end{thm}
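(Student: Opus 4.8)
The plan is to prove the three assertions in sequence, treating the existence of a nontrivial admissible deformation as the engine and the two statements about halting conditions as controlled outcomes of running that engine.

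\textbf{Step 1: Existence of a nontrivial admissible deformation.} Assume $\Gamma=(U,E)$ is not biconnected. If $\Gamma$ is disconnected, pick a connected component $C$; if $\Gamma$ is connected but not biconnected, pick an articulation vertex $a$ and a component $C$ of $\Gamma\setminus\{a\}$. In either case consider the one-parameter family of rotations (about the origin, or about the axis $\{0,a\}$, respectively) applied to the vertices in $C$. I would first check that for small rotation angle $\theta$ this family respects the combinatorial structure: no edge of $E'$ is created or destroyed, because all inequalities defining $E'$ and the non-crossing conditions are strict or else are preserved by continuity on a small interval, and no pair of previously non-adjacent, non-crossing vertices can drop below $\sqrt8$ instantaneously. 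Then I would verify that the third admissibility condition (all pairwise distances $\ge 2$) holds for small $\theta$: the only way it could fail immediately is if some pair $v,w$ already sits at distance exactly $2$ with the rotation decreasing it, but such a pair is joined by a standard edge of $E'$, hence (since the rotation is a rigid motion of $C$ and fixes the complement) either both endpoints lie in $C$ (so the distance is unchanged) or the pair straddles the boundary; in the articulation case one can choose the direction of rotation so that the derivative of this finitely many squared distances is nonnegative, and in the disconnected case the components are genuinely separated so no standard edge straddles them. This gives a nontrivial admissible deformation for some interval $[0,\theta_0)$.

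\textbf{Step 2: Halting condition (H2) never holds.} The content here is: along an admissible deformation, the distance between two vertices $v,w$ whose connecting segment \emph{crosses} an edge of $E'$ cannot decrease to $2$. I would argue by contradiction using the geometry already developed in \cite{Hales:2006:DCG:4}: if $\{v,w\}$ crosses an edge $\{v',w'\}\in E'$, then the four points $0,v,w,v',w'$ and the crossing configuration force, via the metric constraints on $E'$ (standard edges $\le 2.51$, anchored-simplex and special edges $\le 3.2$, and the $\sqrt8$ bound), a lower bound strictly greater than $2$ on $\|v-w\|$ — this is exactly the kind of ``no two points of a crossing pair of short edges come too close'' estimate that is among the elementary geometry lemmas extracted in the blueprint collection. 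Concretely, the crossing edge $\{v',w'\}$ separates $v$ from $w$ on the sphere and its own length being bounded above forces $v$ and $w$ apart. The halting condition (H2) is precisely the situation this lemma rules out, so (H2) can never be the reason the deformation stops; the distance-$\ge 2$ admissibility constraint is therefore never the binding one for crossing pairs.

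\textbf{Step 3: The deformation continues until (H1) holds.} Run the admissible deformation to its supremum angle $\theta^\ast \le$ (the angle at which $C$ returns to a forbidden overlap, or $2\pi$). As $\theta$ increases, finitely many events can occur: a pair $v,w$ reaches distance exactly $2$, or a non-$E'$ pair reaches distance exactly $\sqrt8$ without crossing an $E'$-edge (this is (H1)), or a pair reaches $\sqrt8$ while crossing (which by Step 2's estimate is impossible since crossing pairs stay above $2$ but could in principle approach $\sqrt8$ — here I would note that if such a crossing pair reached $\sqrt8$ the two crossing edges would be conflicting diagonals and one discards one of them, which does not halt the deformation). By Step 2, the distance-$2$ event only happens for non-crossing pairs, but a non-crossing pair at distance $2$ is well below $\sqrt8$, so before reaching distance $2$ that pair passed through distance $\sqrt8$ and triggered (H1) first (for non-crossing pairs, $\sqrt8 > 2$ and the distance is continuous). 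Hence the first halting event encountered is always of type (H1), and the deformation can always be continued up to that point. I would also confirm the deformation is genuinely nontrivial up to that point, i.e. (H1) is not already satisfied at $\theta=0$ by the chosen component — if it were, we would already be in a position to subdivide and would not need the deformation.

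\textbf{Main obstacle.} The crux is Step 2: establishing the sharp metric estimate that a pair of vertices whose short connecting segment crosses another short edge of $E'$ cannot be brought to distance $2$. This requires a careful case analysis over the possible types of the crossed edge (standard, anchored-simplex, special), using the specific numerical bounds $2.51$, $3.2$, $\sqrt8$, and is exactly the sort of configuration-space geometry lemma that the paper flags as the most intuition-heavy part of the proof. The deformation bookkeeping in Steps 1 and 3 is routine once Step 2 is in hand, since it is essentially a compactness-plus-continuity argument over finitely many pairwise distance functions.
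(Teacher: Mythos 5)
Your overall structure (Step 1 produces the deformation, Step 2 rules out (H2), Step 3 concludes that (H1) must be the stopping event) is the right skeleton, but Step~2 has a genuine gap, and it is exactly where the hard content of the theorem lives. You assert that if $\{v,w\}$ crosses an edge $\{u_1,u_2\}\in E'$, then the metric constraints on $E'$ alone force $\|v-w\|>2$, and that this is ``the kind of lemma already in the blueprint collection.'' That is not true in the context of this theorem. Lemma~\ref{lemma:prelim} gives such a bound when $\{v,w\}$ passes through $\{0,u_1,u_2\}$ and $\|u_1-u_2\|\le 3.2$, and Lemma~\ref{lemma:291} gives it when $\{u_1,u_2\}$ passes through $\{0,v,w\}$ and $\|u_1-u_2\|\le 2.91$. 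Since $\sqrt8<2.91$, these two lemmas handle all standard edges and all $\sqrt8$-type edges, but they do \emph{not} handle the edges of anchored simplices with $2.91<\|u_1-u_2\|\le 3.2$, which are precisely the edges bounding loop subregions. For these, the purely metric statement you are invoking is simply false as stated: the configuration cannot be ruled out by distance inequalities on $\{0,v,w,u_1,u_2\}$ alone.

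What is actually needed, and what the rest of the paper's proof supplies, is combinatorial input. After Lemmas~\ref{lemma:prelim} and~\ref{lemma:291} reduce to the case of an anchored-simplex edge, the argument crucially uses (i) the fact that $\{u_1,u_2\}$ bounds or is a special edge of a loop subregion $L$, which remains rigid under all deformations; (ii) Lemma~\ref{lemma:circuit}, which shows that under the length constraints the points $v,u_1,u_2$ (or $w,u_1,u_2$) lie on a circuit of $\Gamma$; and (iii) the standing hypothesis that $v$ and $w$ lie in \emph{different bicomponents} of $\Gamma$, which then yields the contradiction. The case analysis (a)--(d) over the number and shape of crossings, together with the interval-arithmetic dihedral estimates in Lemmas~\ref{lemma:three-edge} and~\ref{lemma:double-cross} and the geometric placement analysis in Lemma~\ref{lemma:special}, is not ``bookkeeping'' but the substantive core. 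The paper's remark about the dodecahedral conjecture makes this explicit: without loops of anchored simplices, Lemmas~\ref{lemma:prelim} and~\ref{lemma:291} suffice and there is no issue; with loops, they do not, and the combinatorial argument is indispensable. Your final paragraph correctly flags this as the crux, but the strategy sketched in Step~2 would not close it.
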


\begin{figure}
\begin{center}
\includegraphics[width=10cm]{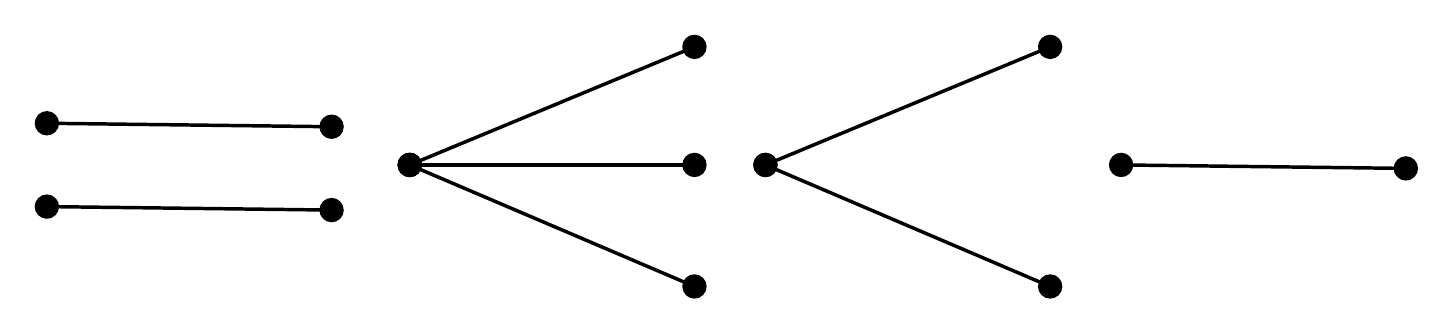}
\end{center}
\caption{Theorem~\ref{thm:biconnected} breaks into various cases: (a) two edges with
distinct endpoints, (b) three or more edges, (c) two edges, or (d) a single edge
passing through the triangle $\{0,v,w\}$.}
\label{fig:cases}
\end{figure}

We break the proof of Theorem~\ref{thm:biconnected} into cases (a), (b), (c), (d) according
to the number and combinatorial structure of the edges $\{u_1,u_2\}$
that pass through the triangle $\{0,v,w\}$.  See Figure~\ref{fig:cases}.
Lemma~\ref{lemma:double-edge} below gives Case (a): we cannot have two such edges $\{u_1,u_2\}$ and $\{u_1',u_2'\}$ with distinct endpoints.   It follows that there is an endpoint $u_2$ shared by every edge that passes through the
triangle $\{0,v,w\}$.  Lemma~\ref{lemma:three-edge} below gives Case (b): there cannot be three or more edges.  Finally, Cases (c) and (d) of one or two edges $\{u_1,u_2\}$ are treated in further subsections.  Each subsection is organized
around one of the main cases (with introductory Section~\ref{sec:pre}): Cases (a), (b), (c), and (d) are
treated in the four Sections~\ref{sec:22}, \ref{sec:13}, \ref{sec:12}, \ref{sec:11}, respectively.

When $U$ is deformed until the halting condition (H1) holds, a new edge $\{v,w\}$ can be added to $E$.
The theorem is repeated to add further edges, until a biconnected graph is obtained.  
The remainder of this section explains why the halting condition (H2) never holds.  For
this, we assume on the contrary, that $\|v-w\|=2$, where $\{v,w\}$ belong to different connected components
of $\Gamma$ (if $\Gamma$ is not connected) and different connected components of $\Gamma\setminus\{a\}$ (if $a$
is an articulation vertex of a connected graph $\Gamma$).  When this situation occurs, we say that $v,w$ belong to
different bicomponents.  Without loss of generality, we may also assume that $\{v,w\}$ crosses some edge of $E'$.

\subsection{Preliminary lemmas}\label{sec:pre}

Lemma~\ref{lemma:1} below shows that a vertex does not cross over an edge during a deformation; the halting conditions for the vertex are met before it reaches the edge.  Recall that the term {\it geometric considerations} refers to a specific collection of methods introduced in \cite[\S4.2]{Hales:2006:DCG} to prove the existence and non-existence of various simple configurations of points in $\ring{R}^3$.

\begin{lemma}\label{lemma:1} Let $S=\{0,v,,u_1,u_2\}$ be a set of four distinct points in $\ring{R}^3$ whose pairwise distances are at least $2$.  Suppose that $\|u\|\le 2.51$, for all $u\in S$.  If the segment $\{0,v\}$ meets the segment $\{u_1,u_2\}$, then $\|v-u_1\|,\|v-u_2\|\le 2.51$.
\end{lemma}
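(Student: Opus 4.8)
The plan is to turn the claim into a bounded, purely two-dimensional nonlinear problem and then to dispatch it with the ``geometric considerations'' methods of \cite[\S4.2]{Hales:2006:DCG}.

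\emph{Step 1 (coplanar reduction).} Let $p$ be a common point of the segments $\{0,v\}$ and $\{u_1,u_2\}$. If the lines through $\{0,v\}$ and through $\{u_1,u_2\}$ coincide, then $0,v,u_1,u_2$ are collinear; putting a coordinate on that line, the hypothesis that all six pairwise distances are at least $2$ while $\|v\|,\|u_1\|,\|u_2\|\le 2.51$ forces (in order for the two segments to overlap at $p$) some pair among these points to be at distance $<2$, a contradiction, so this situation never arises. Otherwise the two lines span a plane $\Pi$, and since $0,v$ lie on one of them and $u_1,u_2$ on the other, all four points lie in $\Pi$; hence we may work in $\ring{R}^2$.

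\emph{Step 2 (normalization).} The segments $\{0,v\}$ and $\{u_1,u_2\}$ are the two diagonals of the quadrilateral $0\,u_1\,v\,u_2$ and they meet, so that quadrilateral is convex with vertices in this cyclic order; in particular the ray from $0$ through $v$ separates $u_1$ from $u_2$. Place $0$ at the origin, $v=(a,0)$ with $a=\|v\|\in[2,2.51]$, $u_1=(x_1,y_1)$ with $y_1\ge 0$, and $u_2=(x_2,y_2)$ with $y_2\le 0$, so that the chord $\{u_1,u_2\}$ meets the $x$-axis at a point $(p_0,0)$ with $0\le p_0\le a$ (the degenerate sub-cases where a vertex lies on the axis are immediate from Step~1). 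By the $u_1\leftrightarrow u_2$ symmetry of the conclusion it suffices to prove $\|v-u_1\|\le 2.51$.

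\emph{Step 3 (the squeeze).} Suppose for contradiction $\|v-u_1\|>2.51$. Then $\|v-u_1\|>\|u_1\|$, so $u_1$ lies strictly on the origin's side of the perpendicular bisector of $\{0,v\}$, i.e.\ $x_1<a/2\le 2.51/2$; together with $\|u_1\|\ge 2$ this forces $y_1\ge\sqrt{4-(a/2)^2}$, so $u_1$ sits ``high above'' the axis. I would then show that the remaining requirements on $u_2$ --- lying below the axis, $\|u_2\|\le 2.51$, $\|v-u_2\|\ge 2$, being joined to the high point $u_1$ by a chord that still crosses $\{0,v\}$, and the bound $\|u_1-u_2\|\le\sqrt 8$ that is available in the application since $\{u_1,u_2\}$ is one of the edges of $E'$ --- cannot all hold simultaneously: forcing $\|v-u_1\|$ this large pushes the high point $u_1$ far enough that every admissible chord $\{u_1,u_2\}$ still meeting $\{0,v\}$ drives $u_2$ into the ball of radius $2$ about $v$, contradicting $\|v-u_2\|\ge 2$. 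Quantitatively this is a universally quantified polynomial inequality in the five reals $a,x_1,y_1,x_2,y_2$ restricted to an explicit compact box, which is precisely the sort of statement proved by interval subdivision and explicit estimates in the geometric-considerations framework. The conceptual content is all in Steps~1--2; the main obstacle is Step~3 --- organizing this bounded (but not one-line) case analysis so that the ``squeeze'' is rigorous, in particular keeping track of where the crossing point $(p_0,0)$ can lie and checking that no degenerate sub-configuration (collinear triples, the crossing point at an endpoint of a segment) is overlooked.
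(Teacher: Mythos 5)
The paper's own proof of this lemma is the single line ``This follows by geometric considerations,'' deferring to the configuration-checking machinery of \cite[\S4.2]{Hales:2006:DCG}. Your reduction to a coplanar, coordinatized, compact polynomial problem is precisely the right preparation for that machinery, and Steps~1 and~2 are correct, so the broad strategy matches the paper's intent. More interesting is that your Step~3 implicitly uncovers a real defect in the \emph{statement}: Lemma~\ref{lemma:1} imposes no upper bound on $\|u_1-u_2\|$, and without one it is false. Take $v=(2,0,0)$, $u_1=(-1,2.3,0)$, $u_2=(1,-2,0)$: then $\|v\|=2$, $\|u_1\|=\sqrt{6.29}<2.51$, $\|u_2\|=\sqrt5<2.51$, all six pairwise distances are at least~$2$, and the chord $\{u_1,u_2\}$ meets $\{0,v\}$ near $(0.07,0,0)$ --- yet $\|v-u_1\|=\sqrt{14.29}>2.51$. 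So the edge-length bound you import in Step~3 is not merely convenient ``context from the application''; it is a missing hypothesis without which the lemma does not hold. Be careful which constant you carry in, though: edges of $E'$ include special edges, whose length may be as large as $3.2$, not $\sqrt8$, so you must check which bound actually governs the edges to which Lemma~\ref{lemma:1} is applied.

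Beyond that, Step~3 as written has a concrete gap. From $x_1<a/2$ and $\|u_1\|\ge 2$ you cannot infer $y_1\ge\sqrt{4-(a/2)^2}$; that deduction requires $|x_1|\le a/2$, and nothing excludes $x_1<-a/2$, which is exactly the regime where $u_1$ drifts toward the antipode of $v$ and $\|v-u_1\|$ is largest. That range of $x_1$ needs its own argument, presumably showing that the $\|u_1-u_2\|$ bound together with $\|u_2\|\ge 2$ and $\|v-u_2\|\ge 2$ prevents the chord from reaching $[0,a]\times\{0\}$ when $u_1$ sits that far across. Once that case is supplied, what remains is indeed a bounded polynomial case-split of the sort that ``geometric considerations'' (interval subdivision on a compact parameter box) is built to discharge; in that respect the residual incompleteness of your Step~3 is no worse than the paper's own one-line proof, but as it stands the squeeze does not yet close.
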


\begin{proof} This follows by geometric considerations.
\end{proof}

The next two lemmas give conditions under which the halting condition (H2) cannot hold.

\begin{lemma}\label{lemma:prelim}  Let $S=\{v,w,u_1,u_2,v_0\}$ be a set of five distinct points in $\ring{R}^3$ whose
pairwise distances are at least $2$. Suppose that the segment $\{v,w\}$ passes through
$\{v_0,u_1,u_2\}$. Assume
$$
\begin{array}{lll}
\|u_1-u_2\|\le 3.2,\\
\|v_0-u\|\le 2.51,\text{ for } u \in S.\\
\end{array}
$$
Then $\|v-w\|>2$.
\end{lemma}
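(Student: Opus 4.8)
The plan is to reduce this to a statement about a configuration of five points in $\real^3$ with prescribed metric constraints, and then invoke \emph{geometric considerations} in the spirit of \cite[\S4.2]{Hales:2006:DCG}, exactly as was done for Lemma~\ref{lemma:1}. The hypotheses are: the five points $v,w,u_1,u_2,v_0$ are pairwise at distance $\ge 2$; the segment $\{v,w\}$ passes through the triangle $\{v_0,u_1,u_2\}$; $\|u_1-u_2\|\le 3.2$; and $v_0$ is within $2.51$ of each of $v,w,u_1,u_2$. The conclusion $\|v-w\|>2$ is the assertion that such a configuration with $\|v-w\|=2$ is geometrically impossible. So the approach is: assume for contradiction that $\|v-w\|=2$, set up coordinates, and derive a contradiction from the fact that $v_0$ is forced to be too close to one of the other four points, or that $v$ and $w$ cannot straddle the triangle while staying at distance exactly $2$.

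First I would place the segment $\{v,w\}$ of length $2$ along a fixed axis, so $v$ and $w$ are determined up to rigid motion. The key geometric fact to exploit is that the triangle $\{v_0,u_1,u_2\}$ is a relatively small, thin triangle: its edge $u_1u_2$ has length at most $3.2$, and $v_0$ is within $2.51$ of both $u_1$ and $u_2$, so the whole triangle is confined to a bounded region. Yet the segment $vw$, being short (length $2$), must pierce this triangle's interior, which pins the triangle to lie near the midpoint region of $vw$. Meanwhile each vertex of the triangle must remain at distance $\ge 2$ from both $v$ and $w$. A short segment of length $2$ passing through a plane at an interior point of the triangle means that one endpoint, say $v$, lies within distance $1$ of the plane of the triangle (since the foot of the crossing splits $vw$ into two pieces of total length $2$, one of which is $\le 1$). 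That endpoint is then very close to the triangle, and I would show it is forced within distance $<2$ of one of $v_0,u_1,u_2$, contradicting the packing condition. The bound $\|v_0-v\|,\|v_0-w\|\le 2.51$ and the crossing hypothesis together tightly constrain where $v_0$ can sit relative to the axis, which is what makes the contradiction go through.

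The main obstacle I anticipate is the same one that always arises with \emph{geometric considerations}: the argument is really a finite collection of interval-arithmetic / quantifier-elimination style verifications over an explicit semialgebraic region in the coordinates of the five points, and writing it out cleanly by hand is delicate because of the several sub-cases (depending on which side of the triangle's plane each endpoint falls on, and which pair of points realizes the forced short distance). In a formal or blueprint treatment one would simply discharge it by the automated methods described in Section~\ref{sec:ordinary} or Section~\ref{sec:zumkeller}; here I expect the author to cite \emph{geometric considerations} and give at most a one- or two-line indication, parallel to the proof of Lemma~\ref{lemma:1}. Accordingly my proposed proof is: reduce to the extremal case $\|v-w\|=2$, observe that the crossing forces one of $v,w$ to within distance $1$ of the plane of $\{v_0,u_1,u_2\}$, and note that the diameter bounds ($\|u_1-u_2\|\le 3.2$, $\|v_0-u\|\le 2.51$) then make that endpoint lie within $<2$ of one of $u_1,u_2,v_0$, a contradiction — all of which is a routine instance of the geometric-considerations toolkit.
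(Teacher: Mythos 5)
Your proposal matches the paper exactly: the paper's entire proof of Lemma~\ref{lemma:prelim} reads ``This follows by geometric considerations,'' which is precisely what you predicted and proposed. Your heuristic sketch (one endpoint of the length-$2$ segment forced within distance $1$ of the plane, then within $<2$ of a triangle vertex) is a plausible intuition for how the finite verification would go, though the paper does not record any such details.
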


\begin{proof} This follows by geometric considerations.
\end{proof}

\begin{lemma}\label{lemma:291}  Let $S=\{v,w,u_1,u_2,v_0\}$ be a set of five distinct points in $\ring{R}^3$ whose
pairwise distances are at least $2$. Suppose that the segment $\{u_1,u_2\}$ passes through
$\{v_0,v,w\}$. Assume
$$
\begin{array}{lll}
\|u_1-u_2\|\le 2.91,\\
\|v_0-u\|\le 2.51,\text{ for } u \in S.\\
\end{array}
$$
Then $\|v-w\|>2$.
\end{lemma}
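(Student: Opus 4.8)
The plan is to argue by contradiction. Since all pairwise distances in $S$ are at least $2$, a failure of the conclusion forces $\|v-w\|=2$ exactly, so it suffices to rule out this extremal configuration. First I would pin down the triangle $T$ with vertices $v_0,v,w$. From $\|v-w\|=2$ and $\|v_0-v\|,\|v_0-w\|\in[2,2.51]$ one checks that $T$ is non-degenerate and acute: the angle at $v_0$ has positive cosine $(\|v_0-v\|^2+\|v_0-w\|^2-4)/(2\|v_0-v\|\,\|v_0-w\|)$ because both edges at $v_0$ have length $\ge 2$, and the other two angles are acute because $4+\|v_0-v\|^2>\|v_0-w\|^2$ and symmetrically. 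A short optimization of the circumradius $R=1/\sin\angle(v_0)$ shows it is largest exactly when $\|v_0-v\|=\|v_0-w\|=2.51$ (pushing $v_0$ as far from the chord $vw$ as the constraints allow), giving $R\le R_0:=2.51/(2\sqrt{1-2.51^{-2}})=1.3683\ldots<2$; in particular the circumcenter of $T$ lies inside $T$.

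Second, I would use the elementary fact that in an acute triangle every point of the closed triangle lies within the circumradius of at least one vertex. The three perpendicular bisectors partition $T$ into three convex quadrilaterals (the Voronoi cells of the vertices), and the cell of a vertex $V$ has as corners $V$ itself, the circumcenter (at distance $R$), and the two midpoints of the edges at $V$ (each at distance half an edge length, which is $\le R$ since a chord is at most a diameter); hence every point of that cell is within $R$ of $V$. Applying this to the point $p$ at which the segment $\{u_1,u_2\}$ meets $T$, there is a vertex $V\in\{v_0,v,w\}$ with $r:=\|p-V\|\le R\le R_0$.

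Third, I would invoke a dimension-free estimate: if $p$ lies on the segment $\{A,B\}$, both $\|A-V\|\ge 2$ and $\|B-V\|\ge 2$, and $\|p-V\|=r<2$, then $\|A-B\|\ge 2\sqrt{4-r^2}$. Write $A=p+\alpha d$, $B=p-\beta d$ with $d$ a unit vector and $\alpha,\beta\ge 0$, and set $\mu=\langle p-V,d\rangle$; the inequalities $\|A-V\|^2\ge 4$ and $\|B-V\|^2\ge 4$ yield $\alpha\ge-\mu+\sqrt{\mu^2+4-r^2}$ and $\beta\ge\mu+\sqrt{\mu^2+4-r^2}$, whose sum is $2\sqrt{\mu^2+4-r^2}\ge 2\sqrt{4-r^2}$. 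Taking $A=u_1$, $B=u_2$ (legitimate since $p$ cannot be a vertex of $T$: that would force $\|p-V\|\le R_0<2$, contradicting the minimum-distance hypothesis; and if the segment happens to lie in the plane of $T$ the estimate still applies), we obtain $\|u_1-u_2\|\ge 2\sqrt{4-R_0^2}=2.9173\ldots>2.91$, contradicting $\|u_1-u_2\|\le 2.91$.

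I expect the only real subtlety to be the numerical tightness of the argument: the constant $2.91$ is essentially sharp here, with $2\sqrt{4-R_0^2}$ exceeding it by only about $7\times 10^{-3}$, so the circumradius of the extremal thin triangle must be estimated carefully rather than bounded crudely. The remaining ingredients — acuteness of $T$, the Voronoi-cell observation, and the two-ball estimate — are routine. As with the surrounding lemmas, the entire argument can of course be absorbed into the uniform method of ``geometric considerations'' in a formalization.
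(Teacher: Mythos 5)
Your proof is correct and, given the tight margin, quite elegant. The paper gives no argument for this lemma beyond the phrase \emph{geometric considerations}, a reference to the standardized bundle of coordinate and monotonicity techniques from \S4.2 of the 2006 \emph{DCG} paper for deciding realizability of small point configurations with metric constraints; a typical such argument would stretch edges to an extremal position and then evaluate an explicit Cayley--Menger determinant or dihedral angle. Your route is genuinely different and more structural. Assuming the extremal case $\|v-w\|=2$, you bound the circumradius of the (always acute) triangle $\{v_0,v,w\}$ by $R_0=2.51/\bigl(2\sqrt{1-2.51^{-2}}\bigr)\approx 1.36828$, the maximum being attained at $\|v_0-v\|=\|v_0-w\|=2.51$ since $\cos\angle v_0=(a^2+b^2-4)/(2ab)$ is increasing in each leg on $[2,2.51]$; you use the Voronoi decomposition of an acute triangle to locate a vertex $V$ with $\|p-V\|\le R_0$ for the crossing point $p$; and you close with the clean two-ball inequality $\|u_1-u_2\|\ge 2\sqrt{4-r^2}$ at $r\le R_0$, giving $\|u_1-u_2\|\ge 2\sqrt{4-R_0^2}\approx 2.9174>2.91$. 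All three steps check: the triangle is acute on the whole box $[2,2.51]^2$ (so the circumcenter is interior), each Voronoi cell is convex with farthest corner the circumcenter, and the two-ball estimate is a one-line quadratic computation. The only rough spot is the parenthetical claim that $p$ cannot be a vertex of $T$, which is muddled; what actually matters (and what you implicitly need for the two-ball step) is $p\neq u_1,u_2$, immediate from $\|p-V\|\le R_0<2\le\|u_i-V\|$. Your version buys a transparent, essentially coordinate-free proof that would formalize cleanly; the paper's uniform reference buys that the same mechanism dispatches the sibling Lemmas~\ref{lemma:1} and~\ref{lemma:prelim} as well. The margin of roughly $7\times10^{-3}$ is real and explains why the sharp isosceles circumradius, rather than any cruder estimate, is required.
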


\begin{proof} This follows by geometric considerations.
\end{proof}

By Lemmas~\ref{lemma:prelim} and \ref{lemma:291}, we may assume that for each edge $\{u_1,u_2\}$ that $\{v,w\}$ crosses,
we have that $\{u_1,u_2\}$ passes through $\{0,v,w\}$ and that $\|u_1-u_2\|>2.91$.  This means that the edge $\{u_1,u_2\}$ is an edge of an anchored simplex,
so that the edge is special or bounds a loop subregion.
This loop subregion will provide the key to the proof, because we will show
that it prevents the distance between $v$ and $w$ from becoming $2$,
as assumed  (Fig.~\ref{fig:biconnectedloop}).

\begin{remark}
As an aside, we mention that this issue of biconnected graphs is a major issue only in the 
proof of the Kepler conjecture and not
in the proof of the dodecahedral conjecture~\cite{Hales:2008:Dodec}.  In the proof of the dodecahedral conjecture there are no loops of anchored simplices, and without loops there are no difficulties: edges of length at most
$\sqrt8$ can be be used instead of the set $E'$.  Lemmas~\ref{lemma:prelim} and~\ref{lemma:291} suffice.
\end{remark}

\begin{figure}
\begin{center}
\includegraphics[width=5cm]{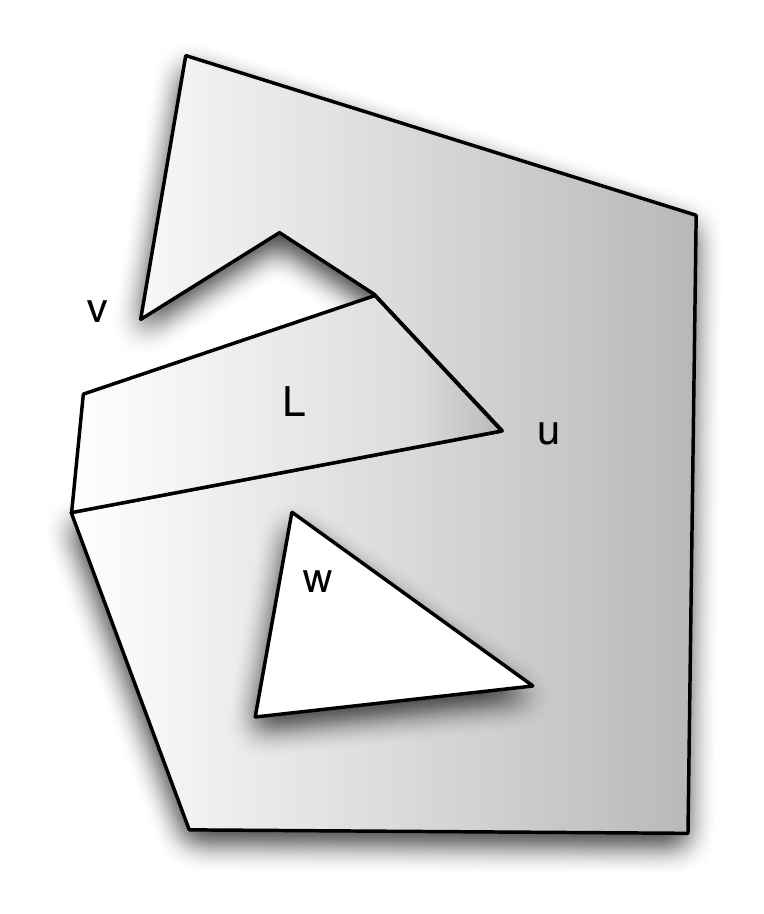}
\end{center}
\caption{In this figure (not to scale), segments represent geodesic arcs on the unit sphere.  The two shaded regions represent
subregions.    There are possibly other subregions partitioning the unshaded portions of the figure.  These other subregions are not represented in this figure.  The shaded region marked $L$ represents a loop subregion. The other shaded region represents a subregion with two boundary components that is undergoing a deformation, by the motion of the triangle with vertex $v$.  The situation of Fig.~\ref{fig:biconnected} does not exist
(as will be shown).  The  intervening loop subregion $L$  forces
$v$ and $w$ to be more than the minimum distance apart.}
\label{fig:biconnectedloop}
\end{figure}

\subsection{Case (a): Edge crossings with distinct endpoints}\label{sec:22}

\begin{lemma}\label{lemma:double-edge} 
There does not exist a set $S=\{0,v,w,u_1,u_2,u_1',u_2'\}$ of seven distinct points
in $\ring{R}^3$ whose pairwise distances are at least $2$ and that satisfies the following conditions.
\begin{itemize}
\item The edges $\{u_1,u_2\}$ and $\{u_1',u_2'\}$ do not cross.
\item The edges $\{u_1,u_2\}$ and $\{u_1',u_2'\}$ both pass through $\{0,v,w\}$.
\item $\|u\|\le 2.51$ for all $u\in S$.
\item $\|v-w\|=2$.
\item $\|u_1-u_2\|,\|u_1'-u_2'\|\le 3.2$.
\item $u_1$ and $u_1'$ lie in the same half-space bounded by the plane $\{0,v,w\}$.
\item The directed segment from $v$ to $w$ crosses the segment $\{u_1',u_2'\}$ before the segment $\{u_1,u_2\}$.
\end{itemize}
\end{lemma}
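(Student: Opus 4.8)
The plan is to argue by contradiction and, after a topological preparation, to finish with an appeal to geometric considerations in the sense of \cite[\S4.2]{Hales:2006:DCG}, exactly as was done for Lemmas~\ref{lemma:1}, \ref{lemma:prelim} and~\ref{lemma:291}. Suppose such an $S$ existed, and project the seven points radially onto the unit sphere, writing $\bar p = p/\|p\|$. The triangle $\{0,v,w\}$ collapses onto a short geodesic arc $\gamma=\bar v\bar w$, and the hypotheses become: the geodesic arcs $\alpha=\bar u_1\bar u_2$ and $\alpha'=\bar u_1'\bar u_2'$ are short (angular length below $\pi$, since edge lengths are at most $3.2$ and norms at least $2$) and have disjoint interiors; each meets $\gamma$ transversally in a single point, $p$ for $\alpha$ and $p'$ for $\alpha'$, with $p'$ strictly between $\bar v$ and $p$ along $\gamma$; and $\bar u_1,\bar u_1'$ lie in one open hemisphere $H^{+}$ bounded by the great circle through $\bar v,\bar w$, while $\bar u_2,\bar u_2'$ lie in the opposite hemisphere $H^{-}$.

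The main step is a short combinatorial analysis that confines one of $u_1',u_2'$. In $H^{+}$ sit the internally disjoint sub-arcs $p\bar u_1\subset\alpha$ and $p'\bar u_1'\subset\alpha'$, with feet on $\gamma$ in the order $\bar v,p',p,\bar w$. The sub-arc $p\bar u_1$ cuts the spherical triangle $\bar v\bar w\bar u_1$ into the sub-triangle $\Delta$ with vertices $\bar v, p, \bar u_1$ (whose boundary includes the equatorial stretch from $\bar v$ to $p$, so $p'\in\overline{\Delta}$) and its complement. Since $p'\bar u_1'$ starts at $p'\in\overline{\Delta}$, cannot meet $p\bar u_1$ (disjoint interiors), and cannot return to $\gamma$ (it is short), it either stays in $\overline{\Delta}$ --- so $\bar u_1'$ lies in the spherical triangle $\bar v\bar u_1\bar u_2$, i.e.\ $u_1'$ is enclosed over the simplex $\{0,v,u_1,u_2\}$ --- or it exits $\Delta$ through the side $\bar v\bar u_1$. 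The same dichotomy applied to $p'\bar u_2'$ in $H^{-}$ leaves exactly three outcomes: $u_1'$ enclosed over $\{0,v,u_1,u_2\}$; or $u_2'$ enclosed over $\{0,v,u_1,u_2\}$; or the residual case in which $\alpha'$ hugs $\bar v$ (one endpoint past $\bar v\bar u_1$, the other past $\bar v\bar u_2$), so that the segment $u_1'u_2'$ threads the triangle $\{0,v,w\}$ close to the vertex $v$.

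Each outcome yields a configuration of at most five of the points, with all pairwise distances at least $2$, all norms at most $2.51$, the special edge at most $3.2$, $\|v-w\|=2$, plus incidence constraints (a vertex enclosed over a spherical triangle together with the passage of $u_1u_2$ through $\{0,v,w\}$, or a segment crossing $\{0,v,w\}$ near $v$) --- a feasibility problem in boundedly many real variables. Geometric considerations rule each out: in the first two outcomes the enclosed vertex cannot keep distance $\ge 2$ from all of $0,v,w,u_1,u_2$ while having norm $\le 2.51$ and projecting into the spherical simplex spanned by $\{0,v,u_1,u_2\}$, and in the residual outcome the five points $0,v,w,u_1',u_2'$ admit no placement with the required separations. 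Since the hypotheses of the lemma force one of the three outcomes, no such $S$ exists. The hard part is the middle paragraph: the three topological hypotheses (disjoint interiors of $\alpha,\alpha'$; $p'$ before $p$ on $\gamma$; $\bar u_1,\bar u_1'$ on the same side) are present precisely to pin $u_1'$ or $u_2'$ down, and extracting from them a short exhaustive list of sub-configurations, each within reach of geometric considerations, is the real work; the ensuing distance estimates and the final invocation of \cite[\S4.2]{Hales:2006:DCG} are routine.
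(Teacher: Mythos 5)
Your topological trichotomy (project to the sphere; the sub-arc of $\alpha'$ in each hemisphere either stays inside the sub-triangle cut off by $\alpha$ or exits through the side $\bar v\bar u_1$, resp.\ $\bar v\bar u_2$) is a reasonable reduction, but the claim that each of the three resulting cases is killed by a routine appeal to geometric considerations on ``at most five of the points'' is where the proof breaks. In the residual case you keep only $\{0,v,w,u_1',u_2'\}$ and assert that these five points ``admit no placement with the required separations.'' That is false as stated: Lemma~\ref{lemma:291} excludes such a crossing configuration only when the crossing edge has length at most $2.91$, and here $\|u_1'-u_2'\|$ may lie in $(2.91,3.2]$ --- indeed the feasibility of exactly this five-point configuration is the reason the whole section (loop subregions, Lemma~\ref{lemma:special}, etc.) is needed at all. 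The additional information that the segment ``threads near $v$'' (crosses the triangles $\{0,v,u_1\}$ and $\{0,v,u_2\}$) cannot be quantified without retaining $u_1,u_2$, and the edges $\{v,u_1\},\{v,u_2\}$ carry no upper bound. The same defect afflicts cases (i)/(ii): the simplex $\{0,v,u_1,u_2\}$ over which you claim $u_1'$ (or $u_2'$) cannot be enclosed has two edges of unbounded length, so the spherical triangle $\bar v\bar u_1\bar u_2$ can be large and the usual enclosed-vertex arguments do not apply; an enclosed vertex at distance $\ge 2$ from $0,v,w,u_1,u_2$ with norm $\le 2.51$ is not obviously (and very possibly not actually) excluded once $u_2'$ and the second crossing are discarded.

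What is missing is a quantitative mechanism that plays the two crossings off against each other. The paper's proof supplies exactly this: since $\|v-w\|=2$ and the other two sides of $\{0,v,w\}$ are at most $2.51$, both segments meet the triangle in its core $X$ of points at distance $\ge 1.2$ from the vertices, and Lemma~\ref{lemma:diam} bounds $\operatorname{diam}(X)<1.044$ (Lemma~\ref{lemma:7small}); on the other hand Lemma~\ref{lemma:abc} and the linking case analysis of Lemma~\ref{lemma:7big} (using Lemma~\ref{lemma:291}, the thresholds $2.85$, $2.91$, $3.64$, rigid-simplex coordinate computations, and an enclosed-vertex argument applied to the \emph{bounded} simplex $\{0,u_1,u_1',u_2\}$) show the two segments must stay more than $1.044$ apart --- a contradiction. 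Your proposal has no analogue of the $1.044$ pincer, which is precisely what converts the separation hypotheses into the upper bounds (e.g.\ $\|u_2-u_1'\|<2.91$) that make geometric considerations bite; without it, the ``ensuing distance estimates'' you call routine are not available, and the case analysis does not close.
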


Note that the last two conditions can always 
be achieved by suitable labels on the points $\{u_1,u_2,u_1',u_2'\}$.

\begin{proof}
This will follow as a direct consequence of Lemmas~\ref{lemma:7small} and~\ref{lemma:7big} below.
\end{proof}

\begin{lemma}\label{lemma:diam} 
Let $S=\{u,v,w\}$ be a triangle such that each side has length at least $2$, and such that $\|u-v\|\le 2.51$, $\|u-w\|\le 2.51$, $\|v-w\|=2$.  Let $X$ be the set of points in the convex hull of $S$ that have distance at least $1.2$ from each vertex of $S$.  Then the diameter of $X$ is less than $1.044$.
\end{lemma}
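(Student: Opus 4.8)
The plan is to reduce Lemma~\ref{lemma:diam} to a two-dimensional estimate and then to a single extremal configuration. Since $S=\{u,v,w\}$ spans a plane, I would work in that plane with coordinates $v=(-1,0)$, $w=(1,0)$ (using $\|v-w\|=2$) and $u=(a,h)$, $h>0$. The hypotheses $2\le\|u-v\|,\|u-w\|\le 2.51$ become $4\le (1\pm a)^2+h^2\le 2.51^2$; subtracting the two forces $|a|\le 0.575$ and hence $h\ge\sqrt 3$, so the triangle is necessarily tall. Here $X=T\setminus\bigl(B(v,1.2)\cup B(w,1.2)\cup B(u,1.2)\bigr)$ with $T=\op{conv}(S)$, where $B(\cdot,1.2)$ denotes an open disk; if $X$ is empty or a point the claim is trivial, so assume not.

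First I would pin down $X$. For $p=(x,y)\in X$, the conditions $\|p-v\|\ge 1.2$ and $\|p-w\|\ge 1.2$ give $x^2+y^2\ge 0.44+2|x|$; in particular $y\ge\sqrt{0.44}$ and no point of the edge $vw$ lies in $X$. Hence $\partial X$ consists of at most three circular arcs, along each of which $X$ lies \emph{outside} the corresponding disk, together with at most two straight pieces lying on the edges $vu$ and $wu$. Now if $p,q\in X$ realize $\op{diam}(X)$, then (being boundary points) neither can lie in the relative interior of a straight piece, since $t\mapsto\|q-p(t)\|^2$ is convex along a segment; nor in the relative interior of a circular arc, since sliding $p$ along the arc toward the point of that circle farthest from $q$ would strictly increase the distance, unless $p$ already were that point, which would force $p$, the disk's center, and $q$ to be collinear with the center strictly between them---impossible, as $p$ and $q$ both lie in a wedge of angle $<\pi$ at that center. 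Therefore $p$ and $q$ are among the at most five ``corners'' of $X$: the waist point $c=(0,\sqrt{0.44})$ of $\partial B(v,1.2)\cap\partial B(w,1.2)$; the endpoints $Q,U_v$ of the piece of edge $vu$ in $X$ (the points of $vu$ at distance $1.2$ from $v$ and from $u$), collapsed to a point of $\partial B(v,1.2)\cap\partial B(u,1.2)$ when $\|u-v\|<2.4$; and symmetrically $P,U_w$ for edge $wu$. So $\op{diam}(X)$ is the largest of at most ten pairwise distances among these points.

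It remains to bound those distances uniformly in the admissible triangle shape. The governing one is $\|P-Q\|$. Writing $\theta_v,\theta_w,\theta_u$ for the angles of $S$ (so $\theta_u$ is opposite $vw$) and using $\cos\theta_v=(1+a)/\|u-v\|$, $\cos\theta_w=(1-a)/\|u-w\|$, $\|v-w\|=2$, a short computation from $P=w+1.2(u-w)/\|u-w\|$, $Q=v+1.2(u-v)/\|u-v\|$ gives
\[
\|P-Q\|^2 \;=\; 6.88 \,-\, 4.8\,(\cos\theta_v+\cos\theta_w)\,-\,2.88\,\cos\theta_u .
\]
I would then argue that this expression, and each of the other corner distances, is maximized when the two free sides are as long as allowed, i.e. at the isosceles triangle with $\|u-v\|=\|u-w\|=2.51$, $\|v-w\|=2$ (lengthening the free sides enlarges $X$, while $B(v,1.2)$ and $B(w,1.2)$ are unchanged): a monotonicity check in the side lengths, or equivalently in $(\theta_v,\theta_w)$ via the law of sines, reduces everything to this one configuration. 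There $\cos\theta_v=\cos\theta_w=1/2.51$ and $\cos\theta_u=1-2/2.51^2$, so $\|P-Q\|^2=6.88-9.6/2.51-2.88(1-2/2.51^2)\approx 1.0896$, giving $\|P-Q\|\approx 1.0438<1.044$, and a direct evaluation shows the remaining corner distances (for instance $\|U_v-U_w\|\approx 0.956$, $\|P-U_v\|\approx 1.005$, $\|c-P\|\approx 0.681$) are all strictly smaller.

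The main obstacle is precisely that this inequality is essentially sharp: the extremal triangle already realizes $\op{diam}(X)\approx 1.0438$, leaving almost no room against $1.044$. Consequently the monotonicity reduction and the final numerical evaluation must be carried out with care, and the degenerate shapes---where a straight piece of $\partial X$ shrinks to a point, or $X$ disconnects or becomes empty (which happens once $\|u-v\|$ or $\|u-w\|$ is small, since then the three disks cover $T$)---must be checked separately to yield only smaller diameters. Consistently with the companion lemmas above, this entire planar estimate can alternatively be discharged by ``geometric considerations'', i.e. by the interval-arithmetic methods of~\cite[\S4.2]{Hales:2006:DCG}.
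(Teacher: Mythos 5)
Your overall plan—characterize $\partial X$ as segments together with concave circular arcs, show the diameter of $X$ is realized by a pair of corner points, and then bound the finitely many corner-to-corner distances—matches the paper's proof. Both approaches also use a monotonicity argument to reduce parameters. The paper pushes $u$ outward along the fixed ray from $v$ through $u$: under this motion both $\|u-v\|$ and $\|u-w\|$ increase, the triangle grows (the old $u$ lies on the segment from $v$ to the new $u$), $B(v,1.2)$ and $B(w,1.2)$ are unchanged, and $B(u,1.2)$ recedes, so $X$ genuinely nests. Taking $\|u-v\|\ge\|u-w\|$, one may therefore set $\|u-v\|=2.51$, but $y=\|u-w\|$ remains a free parameter in $[2,2.51]$. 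The paper then verifies the bound for all such $y$, splitting at $y=2.4$ precisely because that is where the corner pattern of $\partial X$ changes (for $y<2.4$ the portion of edge $uw$ inside $X$ collapses and is replaced by a single point of $\partial B(u,1.2)\cap\partial B(w,1.2)$).

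The gap in your proposal is the further reduction to the single isosceles configuration $\|u-v\|=\|u-w\|=2.51$. Once $\|u-v\|$ is pinned at $2.51$, increasing $\|u-w\|$ forces $u$ to slide along the circle of radius $2.51$ about $v$; under that rotation the angle at $v$ shrinks, the edge $vu$ swings inward, and the new triangle no longer contains the old one, so $X$ does not nest and the heuristic ``lengthening the free sides enlarges $X$'' fails. Concretely, the quantity you compute, $\|P-Q\|^2 = 6.88-4.8(\cos\theta_v+\cos\theta_w)-2.88\cos\theta_u$, is not monotone along the slice $\|u-v\|=2.51$: it is roughly $1.020$ at $y=2$, dips to a minimum near $1.013$ around $y\approx 2.25$, and rises to about $1.044$ at $y=2.51$. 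The maximum does happen to occur at $(2.51,2.51)$ for this corner pair, but that is a computed fact rather than a consequence of monotonicity, and the other corner pairs—including the $y<2.4$ regime in which $P$ is replaced by the $\partial B(u)\cap\partial B(w)$ corner—still require the one-parameter sweep. So you cannot collapse the verification to the single extremal triangle; you must, as the paper does, check the inequality over the whole family $y\in[2,2.51]$ at $\|u-v\|=2.51$ (or discharge that one-parameter family by interval arithmetic, as you suggest at the end).
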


\begin{proof} Let $x=\|u-v\|$ and $y=\|u-w\|$.  Assume $x\ge y$.  As $u$ moves away from $v$, along a fixed line through $v$ and an initial position $u_0$, the region $X$ expands.  Thus, we may assume that $x=2.51$.   The boundary of $X$ is a polygonal curve consisting of line segments and concave arcs of circles.  The diameter is realized by the distance between two vertices $p_i(y)$ of the polygonal curve. We consider two cases according to $y\le 2.4$ and $y\ge 2.4$ because the structure of the polygonal curve changes at $y=2.4$.  We calculate $\|p_i(y)-p_j(y)\|$ directly, checking for each $(i,j)$ that the distances are less than $1.044$.
\end{proof}

\begin{lemma}\label{lemma:7small}  
Let $S=\{0,,v,w,u_1,u_2,u_1',u_2'\}\subset\ring{R}^3$
be a configuration of seven points that satisfies
the conditions of Lemma~\ref{lemma:double-edge}.  Then there exist
a point $p$ on the segment $\{u_1,u_2\}$ and  a point $p'$ on the segment
$\{u_1',u_2'\}$ such that $\|p-p'\|<1.044$.
\end{lemma}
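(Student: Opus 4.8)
The plan is to show that both of the sought points $p$ and $p'$ lie in the set $X$ that Lemma~\ref{lemma:diam} attaches to the triangle $T$ with vertices $0,v,w$, and then to conclude $\|p-p'\|<1.044$ from the diameter bound on $X$. First I would check that $T$ meets the hypotheses of Lemma~\ref{lemma:diam} with the distinguished vertex taken to be the origin: its side lengths $\|v\|$, $\|w\|$, $\|v-w\|$ are each at least $2$ because all pairwise distances among the points of $S$ are at least $2$; the two sides at the origin satisfy $\|v\|,\|w\|\le 2.51$ since every point of $S$ has norm at most $2.51$; and $\|v-w\|=2$ by hypothesis. Thus Lemma~\ref{lemma:diam} produces a set $X\subseteq T$ --- the points of $T$ at distance at least $1.2$ from each of $0,v,w$ --- of diameter less than $1.044$.

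Since the segments $\{u_1,u_2\}$ and $\{u_1',u_2'\}$ both pass through (meet) $T$, I can pick $p\in\{u_1,u_2\}\cap T$ and $p'\in\{u_1',u_2'\}\cap T$; then $p,p'\in T$, so it remains only to verify $p,p'\in X$, i.e.\ that a point of $\{u_1,u_2\}$ is never within $1.2$ of a vertex of $T$. Writing $p=(1-t)u_1+tu_2$ with $t\in[0,1]$, for any $q\in\{0,v,w\}$ I would invoke the elementary identity
\begin{equation*}
\|p-q\|^2=(1-t)\,\|u_1-q\|^2+t\,\|u_2-q\|^2-t(1-t)\,\|u_1-u_2\|^2 .
\end{equation*}
Here $\|u_1-q\|\ge 2$ and $\|u_2-q\|\ge 2$ (distinct points of $S$), while $\|u_1-u_2\|\le 3.2$ by hypothesis and $t(1-t)\le 1/4$, so $\|p-q\|^2\ge 4-(3.2)^2/4=1.44$ and hence $\|p-q\|\ge 1.2$; therefore $p\in X$, and the same estimate with $\|u_1'-u_2'\|\le 3.2$ gives $p'\in X$. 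Consequently $\|p-p'\|$ is at most the diameter of $X$, which is less than $1.044$, as claimed.

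The one place that needs care is that the bound $\|p-q\|\ge 1.2$ must hold all along the segment, not merely at its endpoints --- the squared distance to $q$ is convex in $t$, so its minimum can occur in the interior --- and it is precisely the displayed Stewart-type identity, together with $t(1-t)\le 1/4$, that delivers this uniform bound; note that the estimate is sharp, equality forcing $\|u_i-q\|=2$, $\|u_1-u_2\|=3.2$ and $t=1/2$. No other hypothesis of Lemma~\ref{lemma:double-edge} is needed here: the conditions that the two edges lie on the same side of the plane $\{0,v,w\}$, that $\{v,w\}$ crosses them in a prescribed order, and that the edges do not cross, enter only in the companion Lemma~\ref{lemma:7big}.
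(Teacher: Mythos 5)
Your proof is correct and takes essentially the same approach as the paper: both pick the intersection points $p,p'$ of the two segments with the triangle $\{0,v,w\}$, bound the distance from each vertex of that triangle to each segment below by $\sqrt{2^2-(3.2/2)^2}=1.2$ so that $p,p'$ lie in the set $X$ of Lemma~\ref{lemma:diam}, and then apply the diameter bound. The paper justifies the $1.2$ bound with the terse phrase ``by the metric constraints''; your Stewart-type identity is simply that justification spelled out explicitly, and handles the endpoint vs.\ interior minimizer issue in one stroke.
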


\begin{proof} Let $u\in\{0,v,w\}$.  By the metric constraints, the
distance from $u$ to the segment $\{u_1,u_2\}$ (resp. $\{u_1',u_2'\}$) is at least
$$\sqrt{2^2 - (3.2/2)^2} = 1.2.$$
Let $p$ (resp. $p'$) be the point of intersection of the segment $\{u_1,u_2\}$ 
(resp. $\{u_1',u_2'\}$) with
the convex hull of $\{0,v,w\}$.  By Lemma~\ref{lemma:diam},
we have $\|p-p'\|<1.044$.
\end{proof}

\begin{lemma}\label{lemma:abc}
Let $S=\{u_1,u_1',u_2,u_2'\}\subset\ring{R}^3$ be a set of four distinct points such that the distance between each pair of points is at least $2$.  Assume that
$$
\|u_1-u_2\|\le 3.2,\quad \|u_1'-u_2'\|\le 3.2.
$$
If any of the following conditions hold:
{
\renewcommand{\labelenumi}{(\Alph{enumi})~~}
\begin{enumerate}
\item $\|u_1-u_2'\|,\|u_1'-u_2\|\ge 2.91$;
\item $\|u_1-u_1'\|,\|u_2-u_2'\|\ge 2.85,\quad \|u_1-u_2'\|\ge 2.91$;\quad or
\item $\|u_1-u_1'\|\ge 3.64,\quad \|u_1-u_2'\|\ge 2.91$;
\end{enumerate}
}
then every point on the segment $\{u_1,u_2\}$ has distance greater than $1.044$ from every point on the segment $\{u_1',u_2'\}$.
\end{lemma}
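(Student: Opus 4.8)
The plan is to argue by contradiction: suppose some point $p$ on the segment $\{u_1,u_2\}$ lies within distance $1.044$ of some point $p'$ on $\{u_1',u_2'\}$; replacing $(p,p')$ by a closest such pair, we may assume $\|p-p'\|$ equals the distance between the two segments. The first step is to show that $p$ and $p'$ are then interior to their segments. Indeed, suppose $p=u_1$ (all endpoint cases are the same up to relabeling and interchanging the primed and unprimed pairs). If $p'$ is interior, then $p'$ is the foot of the perpendicular from $u_1$ to the line through $u_1',u_2'$, so $\|u_1-p'\|^2=\|u_1-u_1'\|^2-\|p'-u_1'\|^2=\|u_1-u_2'\|^2-\|p'-u_2'\|^2$; since $\|p'-u_1'\|+\|p'-u_2'\|=\|u_1'-u_2'\|\le 3.2$, one of the subtracted terms is at most $1.6$, whence $\|p-p'\|^2\ge 4-1.6^2=1.44$ and $\|p-p'\|\ge 1.2>1.044$, a contradiction; and if $p,p'$ are both endpoints then $\|p-p'\|\ge 2$. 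So from now on $p-p'$ is the common perpendicular of the two lines.

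The second step is to parametrize this configuration. Put $L=u_2-u_1$, $L'=u_2'-u_1'$ and $n=p-p'$, so that $u_1=p-sL$, $u_2=p+(1-s)L$, $u_1'=p'-tL'$, $u_2'=p'+(1-t)L'$ for some $s,t\in(0,1)$, with $n\perp L$ and $n\perp L'$. Writing $\ell=\|L\|\le 3.2$, $\ell'=\|L'\|\le 3.2$, $\nu=\|n\|\le 1.044$, and letting $\theta$ be the angle between $L$ and $L'$, the orthogonality of $n$ collapses the six squared distances to polynomials; for instance $\|u_1-u_1'\|^2=\nu^2+s^2\ell^2-2st\,\ell\ell'\cos\theta+t^2\ell'^2$ and $\|u_1-u_2'\|^2=\nu^2+s^2\ell^2+2s(1-t)\,\ell\ell'\cos\theta+(1-t)^2\ell'^2$, with analogous formulas for $\|u_2-u_1'\|^2$ and $\|u_2-u_2'\|^2$, while $\|u_1-u_2\|=\ell$ and $\|u_1'-u_2'\|=\ell'$. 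The lemma is thus reduced to showing that the resulting polynomial system — the four ``$\ge 2$'' inequalities together with the relevant one of (A), (B), (C) — has no solution in the box $\ell,\ell'\in[2,3.2]$, $s,t\in(0,1)$, $\theta\in[0,\pi]$, $\nu\in[0,1.044]$.

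For the infeasibility I would treat (A) first and handle (B) and (C) by the same method applied to the appropriate pair of controlled distances. The driving mechanism is already visible when $\theta$ is near $0$: keeping $\|u_1-u_2'\|\approx s\ell+(1-t)\ell'$ and $\|u_2-u_1'\|\approx(1-s)\ell+t\ell'$ above $2.91$ forces $\theta$ small and $s>1/2$, $t<1/2$, but then $\|u_1-u_1'\|^2-\nu^2\approx(s\ell-t\ell')^2$ and $\|u_2-u_2'\|^2-\nu^2\approx((1-s)\ell-(1-t)\ell')^2$ both exceed $4-1.044^2>2.9$, so the two signed roots have absolute value at least $1.7$; since they sum to $\ell-\ell'\in[-1.2,1.2]$ they carry opposite signs, their difference exceeds $3.4$, and this together with $|(2s-1)\ell|,|(2t-1)\ell'|\le 3.2$ and the lower bounds on $\|u_1-u_2'\|$, $\|u_2-u_1'\|$ confines the parameters to a region that is in fact empty. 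Turning this sketch into a rigorous argument valid for all $\theta$ and across the boundary of the parameter box, where the configuration degenerates, is the step I expect to be the main obstacle; I would discharge it by the interval-arithmetic methods of {\it geometric considerations}~\cite[\S4.2]{Hales:2006:DCG} on the reduced low-dimensional system, since the constants $1.044$, $2.91$, $2.85$, $3.64$ appear to be tuned close to the extremal configurations and leave no slack for a crude bound.
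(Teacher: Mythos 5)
Your approach is genuinely different from the paper's. The paper argues by deformation to an extremal configuration: it first shows the two segments can be stretched along their axes (without violating any lower bound) until $\|u_1-u_2\|=\|u_1'-u_2'\|=3.2$, then decreases one dihedral angle of the tetrahedron $S$ at a time, bringing the segments closer until all four cross-edges $\{u_i,u_j'\}$ sit at the lower bounds imposed by $\ge 2$ together with whichever of (A), (B), (C) is in force. This leaves three rigidly determined tetrahedra, one per case, and the lemma is finished by an explicit coordinate calculation of the segment-to-segment distance in each --- a check at three isolated configurations rather than over a parameter box. Your route instead picks a closest pair $p\in\{u_1,u_2\}$, $p'\in\{u_1',u_2'\}$, dispatches the endpoint cases by a clean Pythagorean estimate (the bound $\sqrt{4-1.6^2}=1.2>1.044$ is correct), and then parametrizes the interior-interior case by the common perpendicular, reducing to a six-parameter polynomial system in $\ell,\ell',s,t,\theta,\nu$. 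The reduction and the squared-distance algebra are correct.

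The gap is the final step, which you flag yourself: infeasibility of the reduced system is argued only heuristically near $\theta\approx 0$ and then deferred to an interval-arithmetic computation that is not carried out. As written this does not establish the lemma; the constants are, as you say, tight, so the heuristic for small $\theta$ does not extend by a crude bound. The paper's deformation step is precisely what removes this difficulty --- by pushing every free length to an equality constraint it turns the continuum search into evaluation at three rigid configurations. If you wish to keep the common-perpendicular parametrization, the missing work is a monotonicity argument in $\ell,\ell',\theta$ and the cross-edge lengths showing the segment distance is minimized on the boundary of the box (which would recover the paper's finite check), or else an actual certified interval computation over the reduced box. Without one of these, the conclusion does not follow.
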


\begin{proof} Assume for a contradiction that the assumptions hold and that
the conclusion is false for some configuration.  The metric constraints can be used to show that the segments $\{u_1,u_2\}$ and $\{u_1',u_2'\}$ can be stretched along their axes without decreasing any edge length.  
Thus we may assume without loss of generality that $\|u_1-u_2\|=\|u_1'-u_2'\|=3.2$.  Decreasing one dihedral angle of the simplex $S$ at a time, we may move the segments closer together, until all four edges $\{u_i,u_j'\}$ attain their minimum length.  
Then we have three rigidly determined simplices (A), (B), (C) (with equality constraints).  An explicit coordinate calculation of the distance between the two segments shows that the distance is greater than $1.044$ in each case.
\end{proof}

\begin{lemma}\label{lemma:7big}  
Let $S=\{0,,v,w,u_1,u_2,u_1',u_2'\}\subset\ring{R}^3$
be a configuration of seven points that satisfies
the conditions of Lemma~\ref{lemma:double-edge}.  Then there do not exist
a point $p$ on the segment $\{u_1,u_2\}$ and  a point $p'$ on the segment
$\{u_1',u_2'\}$ such that $\|p-p'\|<1.044$.
\end{lemma}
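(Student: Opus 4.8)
The plan is to reduce Lemma~\ref{lemma:7big} to Lemma~\ref{lemma:abc}. Concretely, I would show that the four-point set $\{u_1,u_2,u_1',u_2'\}$ extracted from any configuration satisfying the hypotheses of Lemma~\ref{lemma:double-edge} must satisfy at least one of the three alternatives (A), (B), (C) of Lemma~\ref{lemma:abc}. Since those four points are pairwise at distance $\ge 2$ and $\|u_1-u_2\|\le 3.2$, $\|u_1'-u_2'\|\le 3.2$ by hypothesis, once one of (A), (B), (C) is in force, Lemma~\ref{lemma:abc} yields that every point of the segment $\{u_1,u_2\}$ is at distance $>1.044$ from every point of $\{u_1',u_2'\}$, which is exactly the assertion of Lemma~\ref{lemma:7big} (and slightly stronger).

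The first step is to establish, by geometric considerations, the single distance estimate common to all three alternatives, namely $\|u_1-u_2'\|\ge 2.91$. The input is purely combinatorial: $u_1$ and $u_1'$ lie on the same side of the plane through $\{0,v,w\}$, so $u_2$ and $u_2'$ lie on the opposite side; both segments $\{u_1,u_2\}$ and $\{u_1',u_2'\}$ meet the triangle $\{0,v,w\}$, the two segments do not cross, and the crossing of the directed segment $v\to w$ with $\{u_1',u_2'\}$ precedes that with $\{u_1,u_2\}$. Hence $u_1$ and $u_2'$ sit in diagonally opposite positions relative to the triangle; with $\|v-w\|=2$, all seven points within $2.51$ of the origin, and all pairwise separations at least $2$, the segment $u_1u_2'$ is forced to skirt around the triangle and cannot be as short as $2.91$. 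A minor point to dispose of in the same step is to transfer the ordering hypothesis of Lemma~\ref{lemma:double-edge} — stated there in terms of the two crossing points on the plane — to the endpoints $u_1$ and $u_2'$ themselves, using the edge bounds $\|u_1-u_2\|,\|u_1'-u_2'\|\le 3.2$ together with the $2$-separations.

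Next I would split on the size of the other diagonal distance $\|u_1'-u_2\|$. If $\|u_1'-u_2\|\ge 2.91$, alternative (A) holds and we are done. If $\|u_1'-u_2\|<2.91$, then $u_2$ and $u_1'$ are both confined to the thin region near the triangle where the two edges nearly touch, and in this regime a further case analysis — according to which of the two parallel distances $\|u_1-u_1'\|$, $\|u_2-u_2'\|$ is the smaller — should show that either both of them are at least $2.85$, putting us in alternative (B), or the rigidity forced by one of them being small pushes $\|u_1-u_1'\|$ up to at least $3.64$, putting us in alternative (C). In every case Lemma~\ref{lemma:abc} then finishes the proof.

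The main obstacle is precisely the claim that the hypotheses of Lemma~\ref{lemma:double-edge} force the quadruple $\{u_1,u_2,u_1',u_2'\}$ into one of the three regimes (A), (B), (C) with no gap: this is a genuine, and not entirely routine, piece of constrained metric geometry in $\ring{R}^3$. It is, however, of exactly the form — existence and non-existence of small point configurations subject to distance bounds — that the geometric considerations method of \cite[\S4.2]{Hales:2006:DCG} is designed to settle, so I would discharge each of the several sub-cases by that method, keeping careful track of how the ceilings ($\le 3.2$ on the two edges, $\le 2.51$ on the distances to the origin) interact with the floor $\|v-w\|=2$ and with the non-crossing and half-space constraints.
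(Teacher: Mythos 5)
Your reduction to Lemma~\ref{lemma:abc} is the right frame; indeed the paper's proof is exactly the contrapositive of your plan: if two points at distance $<1.044$ existed, Lemma~\ref{lemma:abc} would force all of (A), (B), (C) to fail, and a contradiction is then extracted from the negations. The genuine gap is at the decisive step. You assert, unconditionally and on the strength of ``geometric considerations'' alone, that $\|u_1-u_2'\|\ge 2.91$ for the labeling fixed by the ordering hypothesis. That bound is never established in this form, and nothing routine gives it: the diagonal $\{u_1,u_2'\}$ crosses the plane of $\{0,v,w\}$ but need not pass through the triangle $\{0,v,w\}$, so Lemma~\ref{lemma:291} does not apply to it, and the non-crossing and ordering hypotheses by themselves do not force the estimate. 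What the paper actually does is assume some diagonal is short (after a relabeling symmetry exchanging primed with unprimed points and $v$ with $w$) and rule that out by a topological argument that is entirely absent from your proposal: if $\|u_2-u_1'\|<2.91$, then by Lemma~\ref{lemma:291} the segment $\{u_1',u_2\}$ cannot pass through $\{0,v,w\}$, and since $\{u_1,u_1'\}$ stays on one side of the plane, the triangle $\{u_1,u_1',u_2\}$ is linked around $\{0,v,w\}$, so (up to symmetry) either $\{0,v\}$ or $\{v,w\}$ passes through it. The first subcase uses the negation of (C) to get $\|u_1-u_1'\|\le 3.64$, a coordinate computation to exclude $u_2'$ from the convex hull of $\{0,u_1,u_1',u_2\}$, and the fact that $\{0,u_2'\}$ passing through $\{u_1,u_1',u_2\}$ forces $\|u_2'\|>2.51$, contradicting the height bound; the second subcase gives $\|u_1-u_1'\|>2.85$ and $\|u_2-u_2'\|>2.85$, uses the negation of (B) to get $\|u_1-u_2'\|<2.91$, and is folded back into the first subcase.

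Your proposed dichotomy in the short-diagonal regime (comparing $\|u_1-u_1'\|$ with $\|u_2-u_2'\|$ and arguing that smallness of one ``pushes'' $\|u_1-u_1'\|$ up to $3.64$) does not correspond to anything forced by the constraints, and the geometric-considerations method of \cite[\S4.2]{Hales:2006:DCG} is a toolkit for small configurations of a few points; it does not by itself settle a seven-point configuration whose obstruction is a linking phenomenon. In short, you have correctly identified what must be proved---that every admissible configuration lands in (A), (B) or (C), up to relabeling---but that claim is the entire content of the lemma, and your proposal defers it rather than proving it.
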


\begin{proof}  Assume for a contradiction that such $p,p'$ exist.
In Lemma~\ref{lemma:abc}, we may assume that none of the
conditions A, B, C hold.  By obvious symmetry, without loss of generality, we may assume by case A
that $\|u_2-u_1'\|<2.91$.  By Lemma~\ref{lemma:291}, this implies that $\{u_1',u_2\}$ does not pass through $\{0,v,w\}$.  By the conditions of the lemma, the segment $\{u_1,u_1'\}$ does not meet the plane $\{0,v,w\}$, so that $\{u_1,u_1'\}$ does not pass through $\{0,v,w\}$.  This means that the triangle $\{u_1,u_1',u_2\}$ is linked around $\{0,v,w\}$; and  some edge of $\{0,v,w\}$ passes through $\{u_1,u_1',u_2\}$.  Up to symmetry there are two cases: (1) $\{0,v\}$ passes through $\{u_1,u_1',u_2\}$, or (2) $\{v,w\}$ passes through $\{u_1,u_1',u_2\}$.

In the first case, recall that $\{u_1',u_2'\}$ does not cross $\{u_1,u_2\}$.
So $u_2'$ is enclosed over $(0,\{u_1,u_1',u_2\})$.  Then $\{u_2',u_1\}$
and $\{u_2',u_1'\}$ pass through $\{0,v,w\}$.  By Lemma~\ref{lemma:291}, we
have $\|u_2'-u_1\|,\|u_2'-u_1'\|\ge 2.91$.  As we are assuming that C does not hold, we have $\|u_1-u_1'\|\le 3.64$.  We claim that $\{0,u_2'\}$
passes through $\{u_1,u_1',u_2\}$.  Otherwise, $u_2'$ lies in the convex hull of $\{0,u_1,u_1',u_2\}$ and a coordinate calculation shows that the upper bounds on the edges of the simplex $\{0,u_1,u_1',u_2\}$ are inconsistent with the lower bounds on the distances from $u_2'$ to the vertices of the simplex.  Since $\{0,u_2'\}$ passes through $\{u_1,u_1',u_2\}$ we may use geometric considerations to show that $\|u_2'\| > 2.51$.  This is contrary to the hypotheses of Lemma~\ref{lemma:double-edge}.

In the second case, $\{v,w\}$ passes through $\{u_1,u_1',u_2\}$ and through $\{u_1',u_2',u_2\}$.  Geometric considerations give $\|u_1-u_1'\|>2.85$ and $\|u_2-u_2'\|> 2.85$.  Assuming that B does not hold gives $\|u_1-u_2'\|< 2.91$.  The first paragraph of the proof now gives that $\{u_2,u_2',u_1\}$ links around the triangle $\{0,v,w\}$.  The edge $\{v,w\}$ does not pass through $\{u_2,u_2',u_1\}$.  (This can be seen by drawing the relative positions of $p(u)$, for $u\in S$, in the projection $p$ of the points to a plane orthogonal to $\{v,w\}$.)  Thus, we are in the first case, which has already been treated.
\end{proof}

\subsection{Case (b): Triple edge crossings}\label{sec:13}

By Lemma~\ref{lemma:double-edge}, there is a common endpoint $u_2$ such that every edge
of $E'$ that passes through $\{0,v,w\}$ has $u_2$ as an endpoint.
Next we show that there cannot be three such edges.

\begin{lemma}\label{lemma:three-edge}
There does not exist a set of seven distinct points
$$S=\{0,v,w,u_1,u_1',u_1'',u_2\}$$ in $\ring{R}^3$ that satisfies
the following conditions.
\begin{itemize}
\item The distance between each pair of distinct points in $S$ is at least $2$.
\item The edges $\{u_1,u_2\}$, $\{u_1',u_2\}$, and $\{u_1'',u_2\}$ pass through
$\{0,v,w\}$.
\item $\|u\|\le 2.51$, for all $u\in S$.
\item $\|v-w\|=2$.
\item $\|u-u_2\|\le 3.2$, for $u=u_1,u_1',u_1''$.
\end{itemize}
\end{lemma}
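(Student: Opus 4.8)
The plan is to reuse the setup of Lemma~\ref{lemma:7small} and then show that the three segments $\{u_1,u_2\}$, $\{u_1',u_2\}$, $\{u_1'',u_2\}$ (which share the endpoint $u_2$, by Lemma~\ref{lemma:double-edge}), being forced to pass through one tiny common region, cannot fan out enough to keep $u_1,u_1',u_1''$ pairwise at distance $\ge 2$. Since each segment $\{u_i,u_2\}$ has length at most $3.2$ and each of $0,v,w$ is at distance $\ge 2$ from both its endpoints, the distance from each of $0,v,w$ to that segment is at least $\sqrt{2^2-(3.2/2)^2}=1.2$; hence the point $p_i$ at which $\{u_i,u_2\}$ meets the triangle $\{0,v,w\}$ lies at distance $\ge 1.2$ from each of $0,v,w$. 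Applying Lemma~\ref{lemma:diam} to the triangle $\{0,v,w\}$ (whose sides satisfy $\|v-w\|=2$, $\|0-v\|,\|0-w\|\in[2,2.51]$, all at least $2$), the three points $p_1,p_1',p_1''$ lie in a common set of diameter $<1.044$. I would also dispose first of the subcase $\|u_i-u_2\|\le 2.91$ for some $i$: then Lemma~\ref{lemma:291}, with the origin in the role of $v_0$ (legitimate since every point of the configuration has norm $\le 2.51$), already yields $\|v-w\|>2$, contradicting $\|v-w\|=2$. So from now on $\|u_i-u_2\|\in(2.91,3.2]$. Degenerate crossings, in which some $\{u_i,u_2\}$ meets an edge or vertex of $\{0,v,w\}$ rather than the interior, are ruled out by Lemma~\ref{lemma:1}, which in that case forces distance bounds conflicting with the hypotheses $\|u\|\le 2.51$ and pairwise distance $\ge 2$.

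The core is a cone estimate. Let $C$ be the cone with apex $u_2$ spanned by the rays toward $p_1,p_1',p_1''$. Because $u_2$ has distance $\ge 2$ from each of $0,v,w$ and norm $\le 2.51$, one obtains a lower bound on the distance from $u_2$ to the triangle $\{0,v,w\}$: one takes the minimum over the three vertex distances, the two edge distances (bounded below by $\sqrt{2^2-(2.51/2)^2}$ and by $\sqrt{2^2-1^2}$), and, when the nearest point is interior, $\sqrt{4-R^2}$ for the circumradius $R$ (itself bounded, because the constraints force all angles of $\{0,v,w\}$ to be acute). Together with the diameter bound $<1.044$ on $\{p_1,p_1',p_1''\}$, this bounds the half-angle $\theta$ of $C$; and since each $u_i$ lies on one of the three rays with $\|u_i-u_2\|\le 3.2$, the points $u_1,u_1',u_1''$ all lie in a truncated cone of half-angle $\theta$ and slant length $\le 3.2$. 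For two of them on rays making an angle $\gamma\le 2\theta$ one has $\|u_i-u_j\|\le 2\cdot 3.2\cdot\sin(\gamma/2)$, so $\|u_i-u_j\|\ge 2$ forces $\gamma\ge 2\arcsin(1/3.2)\approx 36.4^\circ$; three rays cannot be pairwise that far apart while all lying inside the narrow cone $C$, and this is the contradiction. To make the last step rigorous where the purely angular estimate is too crude, I would follow the reduction in the proof of Lemma~\ref{lemma:abc}: stretch each segment until $\|u_i-u_2\|=3.2$ (which decreases none of the controlled distances), move the crossing points to the extreme positions allowed inside the diameter-$1.044$ set, case on the cyclic order of $p_1,p_1',p_1''$ as in Lemma~\ref{lemma:7big}, and finish with an explicit coordinate computation of the pairwise distances in the resulting rigid configurations.

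The \textbf{main obstacle} is precisely the tightness of that final step. The constants $2.91$, $3.2$, $1.044$ are finely tuned, and the purely angular estimate above --- using only the spread of the $p_i$ and a lower bound on the distance from $u_2$ to the triangle --- is, by a rough computation, only marginally strong enough to exclude three rays pairwise $\ge 36.4^\circ$ apart; closing the gap genuinely requires also exploiting that each $u_i$ stays at distance $\ge 2$ from $0,v,w$ and from the other $u_j$'s while remaining within $2.51$ of the origin. Incorporating all of these constraints at once is most cleanly handled by the reduction-to-rigid-configurations-plus-coordinate-check technique already used for Lemmas~\ref{lemma:diam} and~\ref{lemma:abc}, and it is plausible that the published proof simply records this step as following ``by geometric considerations.''
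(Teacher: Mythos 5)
Your proposal takes a genuinely different route from the paper. You transport the diameter bound of Lemma~\ref{lemma:diam} (established for Lemma~\ref{lemma:7small}) to this setting and try to close with a cone estimate at $u_2$: the three crossing points are confined to a set of diameter $<1.044$, the apex $u_2$ is not too close to that set, so the three rays toward $u_1,u_1',u_1''$ span a narrow cone, yet the pairwise distance constraints $\|u_i-u_j\|\ge 2$ with slant distances in $(2.91,3.2]$ force the rays to be pairwise roughly $40^\circ$ apart. The paper instead sidesteps the crossing points entirely: it first pivots $w$ around $\{0,v\}$ and then $v$ around $\{0,w\}$ to bring both within $2.51$ of $u_2$ (justified by a brief check that $\Delta>0$ persists for the relevant simplices during the pivot), and then observes that $w,u_1,u_1',u_1'',v$ occur in cyclic order around the axis $\{0,u_2\}$. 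Writing $d(w_i,w_j)=\dih(0,u_2,w_i,w_j)$, the dihedral angles along $\{0,u_2\}$ add, so $d(w_1,w_5)\ge d(w_2,w_3)+d(w_3,w_4)$; two interval computations (\calc{2799256461}, \calc{5470795818}) give $d(w_2,w_3),d(w_3,w_4)\ge 0.7$ and $d(w_1,w_5)<1.4$, which is an immediate contradiction. The pivot step is what makes this clean: it turns the problem into a one-dimensional angle-sum inequality around a single axis and needs only two interval bounds, whereas your cone argument needs to juggle the diameter bound, a lower bound on the distance from $u_2$ to the triangle, variable slant distances, and the possibility that the crossing points sit at different depths along their segments. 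You are right that your purely angular estimate is marginal and would ultimately need an explicit coordinate or interval check; your fallback would presumably work, but it is heavier machinery than what the paper actually uses. One small caution about your angular step: the bound $\|u_i-u_j\|\le 2\cdot 3.2\cdot\sin(\gamma/2)$ is a chord length at equal radii $3.2$ and is not an upper bound when the slant distances differ; to turn $\|u_i-u_j\|\ge 2$ into a lower bound on $\gamma$ correctly you should use the law of cosines with $r_i,r_j$ free in $[2.91,3.2]$, which in fact gives the slightly stronger $\gamma\gtrsim 40.2^\circ$ (at $r_i=r_j=2.91$) rather than $36.4^\circ$.
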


\begin{proof}
Assume for a contradiction that $S$ exists.
We may pivot $w$ around the axis $\{0,v\}$ until $\|w-u_2\|\le 2.51$.  (The metric constraints on
edge lengths show that the condition $\Delta>0$
is preserved for the simplices $\{u,u_2,v,w\}$ and $\{u,u_2,w,0\}$, for $u=u_1,u_1',u_1''$, throughout
this pivot.)  A similar
pivot of $v$ gives $\|v-u_2\|\le 2.51$.
We may order the vertices in cyclic order around $\{0,u_2\}$ as
$$(w_1,w_2,w_3,w_4,w_5)=(w,u_1,u_1',u_1'',v),$$ 
so that setting
$d(w_i,w_j) = \dih(0,u_2,w_i,w_j)$, we have
$$d(w_1,w_5)=\sum_{i=1}^4 d(w_i,w_{i+1})\ge d(w_2,w_3)+d(w_3,w_4).$$
Interval calculations\footnote{\calc{2799256461}, \calc{5470795818}} 
give $d(w_2,w_3),d(w_3,w_4)\ge 0.7$
and $d(w_1,w_5)< 1.4$. We obtain an immediate contradiction:
$$1.4 > d(w_1,w_5) \ge 0.7 + 0.7.$$
\end{proof}

\subsection{Case (c): Double edge crossings}\label{sec:12}

This subsection treats the case of two edges crossings in the proof
of Theorem~\ref{thm:biconnected}.
We continue to assume the general context of Theorem~\ref{thm:biconnected}.  As usual, 
the edge $\{u_1,u_2\}\in E'$ crosses $\{v,w\}$.

\begin{lemma}\label{lemma:special}
Let $\{u_1,u_2,w,v\}$ be a set of four distinct points
in $\ring{R}^3$ (in the given context).  
Assume that
 $\|v-u_i\|\le 2.51$, for $i=1,2$.
Assume that no edge of $E'$ crosses $\{v,w\}$
in the open 
half-space $A$ containing $v$ bounded by the plane $\{0,u_1,u_2\}$.
(That is, $\{u_1,u_2\}$ is the first edge to cross $\{v,w\}$, moving
from $v$ toward $w$.)
Assume there is a loop subregion $L$ along 
$\{u_1,u_2\}$ on the $A$-side
of $\{u_1,u_2\}$.  Then $\{u_1,u_2\}$ is a special edge of $E'$ with
corner $v$.
In particular, $\{u_1,u_2\}\not\in E$, so that it is not a bounding
edge of a subregion.
\end{lemma}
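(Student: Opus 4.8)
The plan is to argue in two stages: first that $\{u_1,u_2\}$ must be a special edge, and then that the special corner of the associated special simplex can only be $v$.

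For the first stage I would invoke the reductions already in force in this section. Since $\{u_1,u_2\}\in E'$ crosses $\{v,w\}$, the reductions obtained from Lemmas~\ref{lemma:prelim} and~\ref{lemma:291} give that $\{u_1,u_2\}$ passes through the triangle $\{0,v,w\}$, that $\|u_1-u_2\|>2.91$, and that $\{u_1,u_2\}$ is an edge of an anchored simplex whose upright diagonal is an unerased loop. As $2.91>\sqrt8$, this edge is longer than every non-special edge of $E'$: standard edges and the edges of the third item of the definition of $E'$ have length at most $\sqrt8$, and the non-special edges of an anchored simplex around an unerased loop likewise have length at most $\sqrt8$. Hence $\{u_1,u_2\}$ is a special edge; in particular $\{u_1,u_2\}\in S$, so $\{u_1,u_2\}\notin E$ and bounds no subregion, which settles the final clause. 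Write $\{0,u,u_1,u_2\}$ for the special simplex sharing $\{u_1,u_2\}$ with its anchored simplex: here $u$ is the special corner, $u_1$ and $u_2$ are anchors of an unerased upright loop $\{0,v_0\}$, the edges $\{u,u_1\}$ and $\{u,u_2\}$ have length at most $2.51$ and so lie in $E$, and $u$ is a corner of the loop subregion attached to $\{0,v_0\}$.

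For the second stage I would first locate everything relative to the plane $\{0,u_1,u_2\}$. The loop subregion $L$ supplied by the hypothesis, lying along $\{u_1,u_2\}$ on the $A$-side, must be the loop subregion of $\{0,v_0\}$, so that $v_0$ lies in $A$: the great-circle arc joining the projections of $u_1$ and $u_2$ is not an edge of $\Gamma$ (since $\{u_1,u_2\}\in S$), hence the cell of the subregion decomposition meeting that arc on the $A$-side is the one carrying the face $\{u,u_1,u_2\}$ of the special simplex, which is not a loop subregion; so a loop subregion can appear along $\{u_1,u_2\}$ on the $A$-side only if the loop $\{0,v_0\}$, and with it its corner $u$, lie in $A$. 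Consequently the spherical triangle $T$ spanned by the projections of $u,u_1,u_2$ lies on the $A$-side of the arc from $u_1$ to $u_2$ and is bounded there by the edges $\{u,u_1\},\{u,u_2\}\in E$. Now traverse the great-circle arc from $v$ toward $w$: it begins in the open half-space $A$, stays in $A$ until it meets the arc from $u_1$ to $u_2$ (where it crosses $\{u_1,u_2\}$), and by hypothesis meets no edge of $E'$ while strictly in $A$; since $T$ is exactly the cell adjacent to that arc on the $A$-side, the part of the traverse lying in open $A$ — and in particular the projection of $v$ — is contained in the closed triangle $T$.

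To conclude $v=u$: the vertex $v\in U$ lies at distance at least $2$ from each of $0,u,u_1,u_2$, at most $2.51$ from $0,u_1,u_2$, and its projection lies in $T$. An interior point of $T$ is impossible, since then $v$ would be enclosed over the special simplex $\{0,u,u_1,u_2\}$, which geometric considerations exclude from $\|u_1-u_2\|\le3.2$, $\|u-u_1\|,\|u-u_2\|\le2.51$, $\|v-u_1\|,\|v-u_2\|\le2.51$, $\|v\|\le2.51$ and the pairwise lower bounds of $2$; a point on the open arc $\{u,u_1\}$ or $\{u,u_2\}$ is likewise excluded by a coordinate check (it would force $0$, $v$ and that arc to be nearly collinear); the arc from $u_1$ to $u_2$ lies on the bounding plane, hence not in open $A$; and $v$ is distinct from $u_1,u_2$. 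Hence the projections of $v$ and $u$ coincide, so $v=u$, since two distinct vertices on a single ray from the origin, both of norm in $[2,2.51]$, would be at most $0.51<2$ apart. Thus $\{u_1,u_2\}$ is a special edge with corner $v$, as claimed. I expect the delicate part to be the side-determination in the second stage — establishing that $v_0$, $u$ and the face $T$ all lie in $A$, so that the ``first crossing edge'' hypothesis genuinely traps the projection of $v$ inside $T$ — together with the geometric-considerations facts that no vertex of $U$ is enclosed over a special simplex or lies on one of its bounding arcs.
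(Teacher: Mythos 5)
Your Stage~1 contains the gap that sinks the argument. You claim that every non-special edge of $E'$ has length at most $\sqrt8$, so that $\|u_1-u_2\|>2.91$ by itself forces $\{u_1,u_2\}$ to be special. That is false in the context of this section: the second item in the definition of $E'$ places in $E'$ every edge of an anchored simplex whose upright diagonal is an unerased loop, and the edge of such a simplex opposite the diagonal may have length up to $3.2$ without being special --- it is special only when a special simplex is attached along it; otherwise it stays in $E$ and bounds the loop subregion. This is exactly why the discussion preceding the lemma records only the dichotomy that the edge ``is special or bounds a loop subregion.'' Ruling out the second alternative (that $\{u_1,u_2\}\in E$ and bounds the loop subregion, with $v$ then an anchor of its upright diagonal) is the entire content of the lemma, and it is what the paper's proof does: assuming $\{u_1,u_2\}$ is not special, it uses the first-crossing hypothesis to show $L$ extends to $v$, so $v$ is a corner of $L$ and must be an anchor, and then derives a contradiction by showing the upright diagonal has no admissible location (the regions $A,B,C,D$ of Figure~\ref{fig:nogo}), using consecutiveness of the anchors $u_1,u_2$, the fact that an unerased loop has at least four anchors with some anchored simplex not an upright quarter, the impossibility of a vertex enclosed over an upright quarter, and an edge-through-triangle exclusion. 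None of this appears in your proposal, so the hard case is assumed away rather than proved.

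Your Stage~2 also rests on a wrong picture of the subregion decomposition. Because a special edge is removed from $E$, the spherical triangle of a special simplex is itself part of the loop subregion --- that is precisely why special corners are listed among the corners of a loop subregion in the setup --- so your assertion that the cell on the $A$-side of the arc joining $p(u_1)$ and $p(u_2)$ ``carries the face of the special simplex, which is not a loop subregion'' is incorrect, and the side-determination built on it does not stand. Even granting specialness, the identification of the corner as $v$ should come from the fact that no $E'$-edge crosses $\{v,w\}$ in $A$, so that $L$ extends to $v$ and $v$ is a corner of $L$, rather than from trapping $p(v)$ inside the special triangle and appealing to unverified coordinate checks to force $v$ to coincide with the special corner.
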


\begin{proof}
Assume for a contradiction that $\{u_1,u_2\}$ is not special.
Note that loop subregions have simple polygonal boundaries and
remain rigid under all the deformations.  In particular,
the upright diagonal, special corners, and so forth
remain rigidly positioned with respect to the corners of the subregion.

Since there are no further edges crossing $\{v,w\}$, the subregion
$L$ extends to $v$.  Hence $v$ is a corner of the subregion $L$.
It is either an anchor or a special corner (with respect to $L$). However, it cannot be a special
corner, by the assumption that $\{u_1,u_2\}$ is not a special edge.
Hence it is an anchor.  Also, $u_1$ and $u_2$ are anchors.

To reach a contradiction, 
we consider possible locations of the upright diagonal $\{0,u\}$,
and show that it has nowhere to go (Figure~\ref{fig:nogo}).  
Since $\{u_1,u_2\}$ is an edge,
the points $u_1,u_2$ are consecutive anchors. This prevents $u$ from lying over the region $B$.  
Also, the upright diagonal of any
unerased loop has at least four anchors (say $v,u_1,u_2,w$).  Moreover, some anchored simplex around the upright diagonal is not an upright quarter (because of the edge $\{u_1,u_2\}$).  Geometric considerations based
on these constraints (say $\|w-u_1\|\ge 2.51$) show that
the fourth anchor $w$ is not in $A$.  This prevents $u$ from
being located over the region $A$.
A vertex $u_j$ cannot be enclosed over an upright quarter $\{0,u,v,u_i\}$.  This excludes
 region $C$.  Finally, an edge $\{v,u_i\}$
of length at most $2.51$ cannot pass through a triangle $\{0,u,u_j\}$
of sides at most $2.51,2.51,\sqrt8$ (excluding $D$).
\end{proof}

\begin{figure}
\begin{center}
\includegraphics[width=5cm]{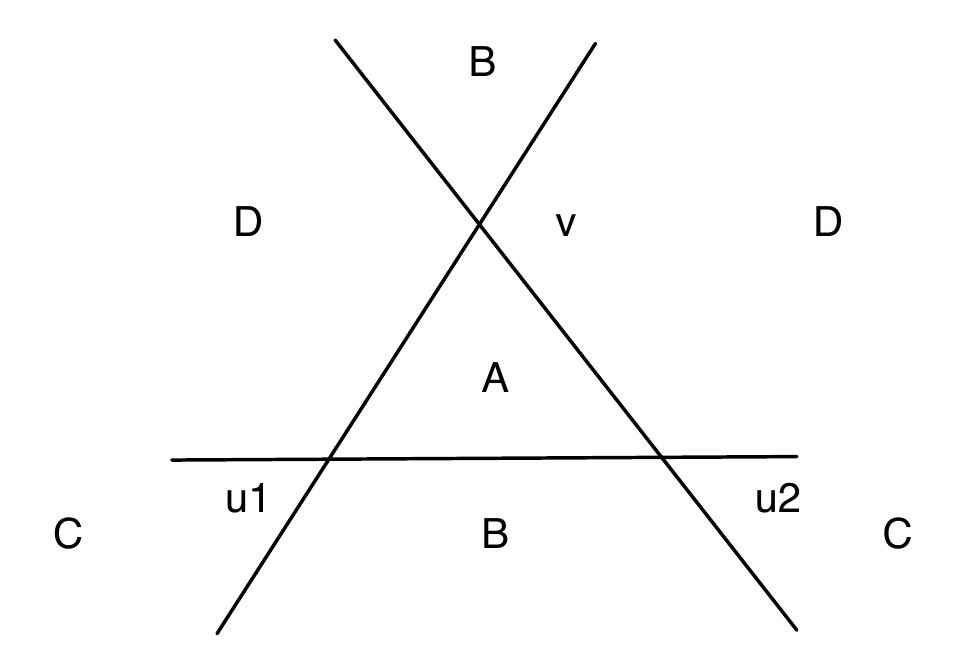}
\end{center}
\caption{The upright diagonal $u$ cannot be placed over any of
the regions $A,B,C,D$.  The lines (not to scale) represent geodesic arcs on the
sphere passing through the pairs of points in $\{p(u_1),p(u_2),p(v)\}$,
where $p$ denotes projection to the unit sphere.}
\label{fig:nogo}
\end{figure}

\begin{lemma}\label{lemma:circuit} 
Let $\{u_1,u_2,w,v\}$ be a set of four distinct points
in $\ring{R}^3$ (in the given context).  
Assume that
 $\|v-u_i\|\le 2.51$, for $i=1,2$.
Assume that no edge of $E$ crosses $\{v,w\}$
in the open 
half-space $A$ containing $v$ bounded by the plane $\{0,u_1,u_2\}$.
(That is, $\{u_1,u_2\}$ is the first edge to cross $\{v,w\}$, moving
from $v$ toward $w$.)
Then 
both edges $\{u_1,v\}$ and $\{u_2,v\}$ belong to $E$.
In particular, there is a circuit of the graph $\Gamma$ through $v,u_1,u_2$.
\end{lemma}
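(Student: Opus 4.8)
The plan is to establish the two conclusions in turn: first that $\{u_1,v\}$ and $\{u_2,v\}$ are edges of $\Gamma=(U,E)$, and then that $u_1$ and $u_2$ stay joined once $v$ is deleted, which closes up the required circuit.

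For the edge membership I would argue straight from the definitions. The points $v,u_1,u_2$ are distinct vertices of $U$, and by hypothesis $0<\|v-u_i\|\le 2.51$ for $i=1,2$, so each $\{v,u_i\}$ is a standard edge and hence lies in $E'$ by the first clause of the definition of $E'$. A special edge has length at least $\sqrt8>2.51$, so neither $\{v,u_1\}$ nor $\{v,u_2\}$ can be special; therefore neither lies in $S$, and $\{v,u_i\}\in E'\setminus S=E$.

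For the circuit I would invoke the reduction already in force in this subsection. Since $\|v-w\|=2$ and $\{u_1,u_2\}$ passes through $\{0,v,w\}$, Lemmas~\ref{lemma:prelim} and~\ref{lemma:291} force $\|u_1-u_2\|>2.91$, so $\{u_1,u_2\}$ is an edge of an anchored simplex $\{0,d,u_1,u_2\}$ whose upright diagonal $\{0,d\}$ is an unerased loop. Hence $u_1$ and $u_2$ are both anchors of $\{0,d\}$, so both are corners of the loop subregion $L$ carrying that loop. Because $L$ is star convex, its boundary $\partial L$ is a simple polygon; that is, $\partial L$ is a cycle built from edges of $E$ and running through every corner of $L$, in particular through $u_1$ and $u_2$, and it has at least four vertices since an unerased loop has at least four anchors. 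Deleting the single vertex $v$ from this cycle — if $v$ is a corner of $L$ at all — leaves a connected path joining $u_1$ to $u_2$ (a cycle on at least four vertices minus one vertex is still a path, and $u_1,u_2\ne v$). Appending the two edges $\{u_1,v\},\{v,u_2\}\in E$ to a simple such path then produces a circuit of $\Gamma$ through $v$, $u_1$ and $u_2$.

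The step I expect to need the most care is this passage through the loop subregion: one must check that $u_1$ and $u_2$ both lie on the boundary cycle of one and the same loop subregion of the current configuration, that this boundary is a subgraph of $\Gamma$ (its arcs are edges of $E$, since each subregion is the closure of a connected component of the complement of the projected $E$-graph), and that removing $v$ cannot separate $u_1$ from $u_2$ even in the degenerate case where $v$ is itself a corner of $L$ — for instance the special corner when $\{u_1,u_2\}$ is a special edge, the situation governed by Lemma~\ref{lemma:special}. The hypothesis that $\{u_1,u_2\}$ is the \emph{first} edge crossed, i.e.\ that no edge of $E$ meets $\{v,w\}$ inside $A$, is part of the standing context of this subsection; it is not itself needed above, both conclusions resting only on the bound $\|v-u_i\|\le 2.51$ and the loop structure of $\{u_1,u_2\}$.
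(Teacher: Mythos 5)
Your deduction that $\{v,u_i\}\in E$ from $\|v-u_i\|\le 2.51$ alone is where you part ways with the paper, and it is the step I would push back on. You treat the first bullet of the definition of $E'$ as an unconditional membership test; the paper treats edge formation as conditional on a non-crossing check and spends essentially the whole proof verifying it. Concretely, the paper splits on whether a loop subregion lies on the $A$-side of $\{u_1,u_2\}$. If one does, Lemma~\ref{lemma:special} shows $\{u_1,u_2\}$ is a special edge with corner $v$, so $\{v,u_1\}$ and $\{v,u_2\}$ are anchored-simplex edges and land in $E$ via the \emph{second} bullet, not the first. If one does not, the paper argues by contradiction --- using the hypothesis that no edge of $E$ crosses $\{v,w\}$ over $A$ --- that no vertex of $U$ is enclosed over the simplex $\{0,u_1,u_2,v\}$, and only then concludes that $\{v,u_i\}$ cross no edge of $E$, invoking ``the criteria for forming edges of $E$'' to place them in $E$. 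Your closing paragraph asserts the ``first edge'' hypothesis is not actually used; in the paper it is exactly what rules out enclosed vertices in the second case, so dismissing it is not safe, and it signals that your shortcut is eliding the point the lemma exists to settle.

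Your circuit argument, by contrast, is sound and supplies a step the paper leaves tacit. The crossing together with Lemmas~\ref{lemma:prelim} and~\ref{lemma:291} forces $\|u_1-u_2\|>2.91>\sqrt 8$, so $\{u_1,u_2\}$ is an edge of an anchored simplex whose upright diagonal is an unerased loop; hence $u_1,u_2$ are anchors and corners of a loop subregion $L$, whose boundary is a simple cycle in $\Gamma$ on at least four vertices passing through $u_1$ and $u_2$, and deleting the single vertex $v$ cannot separate them. Appending $\{v,u_1\}$ and $\{v,u_2\}$ then closes a circuit through $v,u_1,u_2$. This is a useful amplification of the paper's ``the conclusion follows'' / ``this completes the proof.''
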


\begin{proof} If
there is a loop subregion $L$ along 
$\{u_1,u_2\}$ on the $A$-side
of $\{u_1,u_2\}$, Lemma~\ref{lemma:special} implies that
$\{u_1,u_2\}$ is a special edge of $E'$ with
corner $v$.  In particular, $\{u_1,v\}$ and $\{u_2,v\}$ are
edges of $E$. The conclusion follows in this case.

 Now assume that there is no loop subregion along
$\{u_1,u_2\}$ on the $A$ side of $\{u_1,u_2\}$.

Let $S$ be the finite set of points of $U$ enclosed
over the simplex $\{0,u_1,u_2,v\}$.  We show by contradiction
that $S$ is empty.
The plane $\{0,v,w\}$ separates
$S$ into a disjoint union $S = S_1\cup S_2$, according to those
in the same half-space as $u_i$, $i=1,2$.
We form the convex hull of the projection $p$ to the unit sphere of the 
points $S_i\cup\{u_i,v\}$.  As in~\cite[\S12.13]{Hales:2006:DCG:4},
form a sequence of geodesic arcs on the unit sphere from $p(u_i)$
to $p(v)$.  Let $p(w_i)$, for $w_i\in S_i$, be the other endpoint
of the arc starting at $p(u_i)$ (or set $w_i=v$ if $S_i=\emptyset$.  
For some $w'\in\{w_1,w_2\}\setminus\{v\}$,  the edges $\{w',u_i\}$ do not cross any edges
of $E$.  Furthermore, geometric considerations show that
$\|w'-u_i\|\le 2.51$, for $i=1,2$.  By the criteria for forming
edges of $E$, we must have $\{w',u_i\}\in E$ for $i=1,2$.  This
contradicts the assumption that $\{v,w\}$ does not cross any edges
of $E$ over $A$.  Hence $S=\emptyset$.

Since $S=\emptyset$, the edges $\{v,u_i\}$ do not cross any edges
of $E$.  By the criteria for forming edges of $E$, they belong to $E$.
This completes the proof.
\end{proof}

We are ready to prove the next major case of Theorem~\ref{thm:biconnected}.
We continue to work in the general context of that theorem, with $v$ and $w$
in different bicomponents of the graph $\Gamma$.

\begin{lemma}\label{lemma:double-cross}  
In this context, there does not exist a set of six points $\{0,v,w,u_1,u_1',u_2\}$
where $\{u_1,u_2\}$ and $\{u_1',u_2\}$ pass through $\{0,v,w\}$ and such that
  $$\|u_1-u_2\|\le 3.2,\quad \|u_1'-u_2\|\le 3.2.$$
\end{lemma}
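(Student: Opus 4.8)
The plan is to contradict the standing hypothesis that $v$ and $w$ lie in different bicomponents of $\Gamma$ by exhibiting a path of $\Gamma$ that joins them. Assume such a six-point configuration exists; as in the rest of the section, all pairwise distances are at least $2$, every point has norm at most $2.51$, $\|v-w\|=2$, and the segment $\{v,w\}$ crosses both $\{u_1,u_2\}$ and $\{u_1',u_2\}$. By Lemmas~\ref{lemma:prelim} and~\ref{lemma:291} (and the discussion following them) each of these two edges passes through the triangle $\{0,v,w\}$ and has length greater than $2.91$, so each is the base edge of an anchored simplex; together with the hypotheses, $2.91<\|u_1-u_2\|\le 3.2$ and $2.91<\|u_1'-u_2\|\le 3.2$. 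After possibly interchanging $u_1$ and $u_1'$, assume the directed segment from $v$ to $w$ meets $\{u_1',u_2\}$ first and $\{u_1,u_2\}$ second.

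The next step is to secure the metric hypotheses needed to invoke Lemma~\ref{lemma:circuit} at $v$ with the edge $\{u_1',u_2\}$ and at $w$ with the edge $\{u_1,u_2\}$. Each of the two segments meets the boundary of the triangle $\{0,v,w\}$ in two points, one lying on $\{v,w\}$ and the other on $\{0,v\}$ or on $\{0,w\}$. When $\{u_1',u_2\}$ meets $\{0,v\}$, Lemma~\ref{lemma:1} applied to $\{0,v,u_1',u_2\}$ gives at once $\|v-u_1'\|\le 2.51$ and $\|v-u_2\|\le 2.51$; in the remaining configurations one first pivots $v$ about the axis $\{0,w\}$, as in the proof of Lemma~\ref{lemma:three-edge} and chosen so that the simplices involved remain nondegenerate and the crossing pattern is unchanged, to reduce to the previous situation. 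The symmetric argument at $w$, now using $\{u_1,u_2\}$, yields $\|w-u_1\|\le 2.51$ and $\|w-u_2\|\le 2.51$.

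With these bounds in hand, apply Lemma~\ref{lemma:circuit} twice. Since $\{u_1',u_2\}$ is the first edge of $E'$ met by $\{v,w\}$ and $E\subset E'$, no edge of $E$ crosses $\{v,w\}$ on the $v$-side of the plane $\{0,u_1',u_2\}$, and the lemma yields $\{v,u_1'\},\{v,u_2\}\in E$ together with a circuit $C_v$ of $\Gamma$ through $v$, $u_1'$, $u_2$. Applying it from $w$ with $\{u_1,u_2\}$ gives $\{w,u_1\},\{w,u_2\}\in E$ and a circuit $C_w$ of $\Gamma$ through $w$, $u_1$, $u_2$. If $\Gamma$ is disconnected this already contradicts the hypothesis, since $C_v$ joins $v$ to $u_2$ and $C_w$ joins $u_2$ to $w$. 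If $\Gamma$ is connected with chosen articulation vertex $a$ and $u_2\ne a$, then each of $C_v$ and $C_w$ supplies two internally vertex-disjoint paths emanating from $u_2$ (to $v$, respectively to $w$); since $a\notin\{v,w,u_2\}$, at most one path of each pair passes through $a$, so there is a path from $v$ to $w$ in $\Gamma$ that misses $a$, again a contradiction.

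The one remaining case is $u_2=a$, and this is where I expect the real difficulty: here $\{v,u_2\},\{w,u_2\}\in E$ but both circuits run through $a$, so the concatenation argument breaks down. I would treat it by a closer analysis of the anchored simplices carried by $\{u_1',u_2\}$ and $\{u_1,u_2\}$: when one of these edges bounds a loop subregion, Lemma~\ref{lemma:special} forces it to be a special edge with corner $v$ (respectively $w$), and two such special or loop configurations meeting at the common vertex $u_2$, subject to $\|u_1-u_2\|,\|u_1'-u_2\|\le 3.2$, $\|u_1-u_1'\|\ge 2$, and the norm bounds, should be ruled out by geometric considerations in the spirit of Lemmas~\ref{lemma:double-edge} and~\ref{lemma:three-edge} (for instance by placing the projections of the relevant points on the unit sphere and making a dihedral-angle estimate around the axis $\{0,u_2\}$). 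Verifying that the pivots of the second paragraph can be performed in every sub-case, and settling this $u_2=a$ case, are the two substantive points; everything else is bookkeeping with circuits in $\Gamma$.
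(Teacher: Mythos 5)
There is a genuine gap, and in fact the overall strategy is inverted relative to what the geometry allows. Your second paragraph asserts that each of the two crossing segments ``meets the boundary of the triangle $\{0,v,w\}$ in two points,'' but that is false: a segment that passes through the triangle $\{0,v,w\}$ pierces the \emph{interior} of the triangle at a single point, so $\{u_1',u_2\}$ never meets the segment $\{0,v\}$ and Lemma~\ref{lemma:1} does not apply. The follow-up pivot of $v$ about $\{0,w\}$ ``to reduce to the previous situation'' is not justified: nothing is said about why such a pivot preserves the constraints or reaches the desired configuration. The deeper problem is that you are trying to prove all four bounds $\|v-u_1'\|,\|v-u_2\|,\|w-u_1\|,\|w-u_2\|\le 2.51$ outright; the paper's proof instead shows the \emph{opposite} — these four inequalities cannot all hold under the running bicomponent hypothesis — and then dispatches the case where some bound fails by stretching $\|u_2-u_1\|=\|u_2-u_1'\|=3.2$, deriving $\|u_2\|,\|u_1'\|\le 2.23$, pivoting to get $\|u_2-v\|,\|u_2-w\|\le 2.51$, and invoking interval arithmetic (\calc{7431506800}, \calc{5568465464}, \calc{4741571261}, \calc{6915275259}) on the dihedral angles around $\{0,u_2\}$ to get $1.3 > d(1,4) \ge 0.5+0.8+0 = 1.3$.

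Your third paragraph's circuit argument is the same idea the paper uses to prove that claim, but you run it with only two circuits $C_v,C_w$ meeting in the single vertex $u_2$, which is exactly why $u_2=a$ is fatal for you. The paper avoids this by noting that the subregion $L$ between the two crossings is (after ruling out the alternative in case (2)) a loop subregion whose boundary circuit passes through $u_1,u_1',u_2$; this third circuit shares \emph{two} vertices with each of $C_v$ and $C_w$, which pins all three circuits, hence $v$ and $w$, into a single bicomponent regardless of which vertex is the articulation point. Without either that third circuit or the interval-arithmetic fallback, the $u_2=a$ subcase you flag as ``the real difficulty'' remains open, and the sketch you give (``geometric considerations in the spirit of Lemmas~\ref{lemma:double-edge} and~\ref{lemma:three-edge}'') is not a proof.
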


\begin{proof}  We argue by contradiction.  We may assume that $\{u_1,u_1'\}$ are ordered so that the cyclic order around $\{0,u_2\}$ is
$(w_1,w_2,w_3,w_4)=(v,u_1',u_1,w)$. 
By the previous results, there are at most two edges that pass through $\{0,v,w\}$
in this manner.  In particular, the part of the line segment $\{v,w\}$ between the crossings
of $\{u_1,u_2\}$ and $\{u_1',u_2\}$ lies in a single subregion $L$.  

We claim that we do not have $\|v-u_1'\|,\|v-u_2\|,\|w-u_1\|,\|w-u_2\|\le 2.51$.  Otherwise, we break
into two cases and derive a contradiction as follows.  Either (1) $L$ is a loop subregion, or (2)
$L$ is not a loop subregion, but both regions adjacent to $L$ (along edges $\{u_1,u_2\}$, $\{u_1',u_2\}$)
are loop subregions.  In case (1),
$\{u_1,u_2,u_1'\}$ are corners of the loop subregion $L$.  Hence, they lie on a circuit in $\Gamma$
formed by the corners of that loop subregion.  
If $\|v-u_2\|,\|v-u_1'\|\le 2.51$, then by Lemma~\ref{lemma:circuit}, the points
$\{v,u_1',u_2\}$ lie on a circuit in $\Gamma$.  A similar conclusion holds if corresponding inequalities
hold $\|w-u_2\|,\|w-u_1\|\le 2.51$.  If all four inequalities hold, then these circuits put $v,w$ in the
same bicomponent of $\Gamma$, which is contrary to hypothesis.  
In case (2), then by Lemma~\ref{lemma:circuit}, $\{v,u_2\},\{v,u_1'\}\in E$, so $\{u_1',u_2\}$ is a special
edge and $L$ is a loop subregion.  This is contrary to the assumption of case (2).
Hence we may assume by symmetry and without
loss of generality that $\|v-u_1'\|\ge 2.51$ or $\|v-u_2\|\ge 2.51$.

We may stretch along the edges $\{u_2,u_1\}$, $\{u_2,u_1'\}$, moving $u_1,u_1'$, 
until $\|u_2-u_1\|=\|u_2-u_1'\|=3.2$.   We may add inequality
$$
\|u_2\|\le 2.23,
$$
for otherwise by geometric considerations $\|u_2-u_1'\|> 3.2$.  Similarly, $\|u_1'\|\le 2.23$.
If $\|u_2-v\|\ge 2.51$, we may pivot $v$ toward $u_2$ around the axis $\{0,w\}$ until $\|u_2-v\|\le 2.51$.
Similarly, we may assume that $\|u_2-w\|\le 2.51$.
Set $d(i,j) = \op{dih}(0,u_2,w_i,w_j)$.
Then interval arithmetic calculations\footnote{\calc{7431506800}, \calc{5568465464}, \calc{4741571261}, \calc{6915275259} } give the contradiction:
$$
1.3 > d(1,4) = d(1,2) +d(2,3)+d (3,4) > 0.5 + 0.8 + 0 = 1.3.
$$
\end{proof}

\subsection{Case (d): Single edge crossings}\label{sec:11}

This subsection treats the proof of Theorem~\ref{thm:biconnected}
in the case of a single edge crossing $\{u_1,u_2\}$.  This is
the final case of the proof.
We continue to assume the notation and general context of that theorem.
In particular,  $v$ and $w$ lie in different bicomponents of
the graph $\Gamma$.

\begin{lemma}  Let $\{0,u_1,u_2,v,w\}$ be a set of five distinct
points such that $\{u_1,u_2\}$ is the only edge of $E'$ that
crosses $\{v,w\}$.  Then $\|v-w\|>2$.
\end{lemma}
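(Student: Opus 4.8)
The plan is to argue by contradiction: suppose $\|v-w\|=2$. Recall that in the present context every point of $\{0,u_1,u_2,v,w\}$ has norm at most $2.51$, all pairwise distances are at least $2$, and $v$ and $w$ lie in different bicomponents of $\Gamma$. Since $\{u_1,u_2\}\in E'$ crosses $\{v,w\}$, Lemmas~\ref{lemma:prelim} and~\ref{lemma:291} force $\{u_1,u_2\}$ to pass through the triangle $\{0,v,w\}$ with $\|u_1-u_2\|>2.91$; an edge of $E'$ longer than $\sqrt8$ can only arise as an edge of an anchored simplex (a special edge, or an edge bounding a loop subregion), so in particular $\|u_1-u_2\|\le 3.2$.

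Now split according to the four distances $\|v-u_i\|$, $\|w-u_i\|$, $i=1,2$. \emph{First, suppose all four are at most $2.51$.} Let $A_v$ and $A_w$ be the open half-spaces cut off by the plane $\{0,u_1,u_2\}$ that contain $v$ and $w$ respectively. Because $\{u_1,u_2\}$ is the \emph{only} edge of $E'$ meeting $\{v,w\}$, and its projected arc meets the arc of $\{v,w\}$ exactly on the great circle through $u_1,u_2$ (the common boundary of $A_v$ and $A_w$, not their interiors), no edge of $E$ crosses $\{v,w\}$ inside $A_v$ or inside $A_w$, and $\{u_1,u_2\}$ is trivially the first crossing edge as seen from either $v$ or $w$. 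Hence Lemma~\ref{lemma:circuit} applies from the $v$-side to give $\{v,u_1\},\{v,u_2\}\in E$, and from the $w$-side to give $\{w,u_1\},\{w,u_2\}\in E$. These four edges form a $4$-cycle $v,u_1,w,u_2$ in $\Gamma$ passing through both $v$ and $w$, so $v$ and $w$ lie in a common bicomponent, contrary to hypothesis.

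\emph{Otherwise, at least one of the four distances exceeds $2.51$}; by the evident $v\leftrightarrow w$, $u_1\leftrightarrow u_2$ symmetry we may assume $\|v-u_1\|\ge 2.51$. From this point the graph plays no role and the argument is purely metric, patterned on Lemmas~\ref{lemma:three-edge} and~\ref{lemma:double-cross}: stretch $\{u_1,u_2\}$ along its own line until $\|u_1-u_2\|=3.2$, pivot $w$ about the axis $\{0,v\}$ and $v$ about the axis $\{0,w\}$ to bring any of the remaining distances that still exceed $2.51$ down to $2.51$ (the metric bounds guarantee that $\Delta>0$ is preserved for the simplices $\{0,u_2,v,w\}$, $\{0,u_1,u_2,v\}$, $\{0,u_1,u_2,w\}$ throughout these motions, and that $\{u_1,u_2\}$ stays the unique edge crossing $\{v,w\}$), and if needed adjoin an auxiliary inequality such as $\|u_2\|\le 2.23$ (forced by geometric considerations, as otherwise $\|u_1-u_2\|>3.2$). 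Then list $v,u_1,w$ in cyclic order about the axis $\{0,u_2\}$, set $d(i,j)=\dih(0,u_2,w_i,w_j)$ with $(w_1,w_2,w_3)=(v,u_1,w)$, and derive the numerical contradiction from the identity $d(1,3)=d(1,2)+d(2,3)$ together with an interval-arithmetic upper bound on $d(1,3)=\dih(0,u_2,v,w)$ and interval-arithmetic lower bounds on $d(1,2)$ and $d(2,3)$; this last step may need to be broken into a few sub-cases according to which of the four distances are the ones exceeding $2.51$.

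The main obstacle is this last, purely metric case: choosing the exact sequence of admissible stretches and pivots, verifying that each one leaves all the metric hypotheses and the crossing configuration intact, and identifying interval inequalities whose bounds are jointly inconsistent --- precisely the kind of careful bookkeeping that was already required in the proof of Lemma~\ref{lemma:double-cross}.
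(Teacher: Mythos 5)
Your Case~1 (all four of $\|v-u_i\|,\|w-u_i\|\le 2.51$) matches the paper: Lemma~\ref{lemma:circuit} applied from both the $v$-side and the $w$-side gives circuits through $\{v,u_1,u_2\}$ and $\{w,u_1,u_2\}$, putting $v$ and $w$ in the same bicomponent. So far so good, and your preliminary reduction via Lemmas~\ref{lemma:prelim} and~\ref{lemma:291} to the range $2.91<\|u_1-u_2\|\le 3.2$ is also what the paper uses.

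The gap is in the ``otherwise'' branch. You assert that once one of the four distances exceeds $2.51$ ``the graph plays no role and the argument is purely metric,'' and you propose a stretch/pivot/interval-arithmetic contradiction patterned on Lemmas~\ref{lemma:three-edge} and~\ref{lemma:double-cross}. This is not true for the full ``otherwise'' branch. The paper splits it into three sub-cases: if \emph{two} of the distances are large in the right pattern (Case~2: a diagonal pair $\|w-u_2\|,\|v-u_1\|\ge 2.51$; Case~3: a shared-endpoint pair $\|w-u_2\|,\|v-u_2\|\ge 2.51$) then indeed geometric considerations force $\|u_1-u_2\|>3.2$, a metric contradiction — so that part of your intuition is right. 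But in the paper's Case~4, where exactly $\|w-u_2\|\ge 2.51$ while $\|v-u_1\|,\|v-u_2\|\le 2.51$, there is no metric contradiction: the configuration is geometrically realizable, and no amount of stretching and pivoting will produce an interval-arithmetic impossibility. The paper resolves Case~4 by a structural argument that you have not reproduced: since $\|u_1-u_2\|>\sqrt 8$, the edge bounds a loop subregion; $\{u_1,u_2\}$ cannot be special (else $v,w$ would be corners of the same loop subregion); Lemma~\ref{lemma:special} rules out a loop subregion on the $v$-side, so the loop subregion lies on the $w$-side with $w$ as a corner, giving a circuit through $\{w,u_1,u_2\}$; and Lemma~\ref{lemma:circuit} gives a circuit through $\{v,u_1,u_2\}$, again contradicting the bicomponent hypothesis. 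Without invoking Lemma~\ref{lemma:special} and the loop-subregion structure, your second branch cannot close Case~4.
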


\begin{proof} We assume for a contradiction that $\|v-w\|=2$.
We consider four cases depending on lengths.  

{\it Case 1:} $\|u-u_i\|\le 2.51$, for $i=1,2$ and $u=v,w$.
By Lemma~\ref{lemma:circuit}, there are circuits running through
$\{u,u_1,u_2\}$, for $u=v,w$.  This is contrary to the assumption
that $v,w$ lie in different bicomponents of the graph $\Gamma$.  (In the remaining cases, there is
no loss in generality to assume $\|w-u_2\|\ge 2.51$.)

{\it Case 2:} $\|w-u_2\|\ge 2.51$, $\|v-u_1\|\ge 2.51$.
Geometric considerations give the contradiction
$\|u_1-u_2\|>3.2$.

{\it Case 3:} $\|w-u_2\|\ge 2.51$, $\|v-u_2\|\ge 2.51$.
Geometric considerations gives the contradiction
$\|u_1-u_2\|>3.2$.

{\it Case 4:} $\|w-u_2\|\ge 2.51$, $\|v-u_i\|\le 2.51$, for $i=1,2$.
The edge $\{u_1,u_2\}$ cannot be a special edge of $E'$.  Otherwise,
$v,w$ are corners of the same loop subregion.  This contradicts
the running assumption that these two vertices belong to separate
bicomponents of the graph $\Gamma$.  By Lemma~\ref{lemma:special}, there is no loop subregion along $\{u_1,u_2\}$ on the $v$-side.
Since $\{u_1,u_2\}$ has length greater than $\sqrt8$, there
is a loop subregion $L$ bounded by the edge, and it must then lie
on the $w$-side.  Thus, $w$ is a corner of $L$ and the circuit of
$\Gamma$ described by the boundary of $L$ passes through $w,u_1,u_2$.
By Lemma~\ref{lemma:circuit}, there is a circuit of $\Gamma$ through
$v,u_1,u_2$.  Hence, $v,w$ lie in the same biconnected component,
which is contrary to the running assumption.
\end{proof}

\section{Errata listing}

The abridged version of the Kepler conjecture
in the Annals \cite{Hales:2005:Annals}
was generated by the same tex
files as the unabridged version in \cite{Hales:2006:DCG}.
For this reason,
it seems that every correction to
the abridged version should also be a correction to the unabridged version.
We list errata in the
unabridged version. The same list applies to corresponding 
passages in the abridged version.

Each correction gives its location in \cite{Hales:2006:DCG}.
The location
\line+n counts down from the top of the page, or
if a section or lemma number is provided, it
counts from the top of that organizational unit.
The location \line-n counts up from the bottom
of the page. Footnotes are not included in the
count from the bottom.  Every line containing
text of any sort is included in the count,
including displayed equations, section headings,
and so forth.  The material to the left of $\lto$ 
indicates original text, and material to the right of the
arrow gives replacement text.  
The original text and replacement text appear in italic.
Comments about the corrections appear in roman. 

In addition to the corrections to the text mentioned below, 
there have been some corrections to the computer code, including some typos in the
listings of nonlinear inequalities.
They are described in detail in~\cite{Hales:2008:Errata}.

\subsection{Listing}\hfill\break
\parskip=0.2\baselineskip

\begin{\sz}
\baselineskip=1.2\baselineskip

[p.47,Lemma~5.16] $Q\lto F$

[p.49,\line+2] {\it supposed} \lto {\it suppose}
	
[p.63,Lemma~7.10]
	${\mathcal S}$-{\it system} \lto $Q$-{\it system}

[p.75,Remark~8.11]
	{\it show}\lto {\it shows}

[p.78,\line-7] {\it constraints} \lto{\it  constraint}

[p.86,\line+14] {\it Let $\{0,v\}$ be 
          the diagonal of an upright quarter in the $Q_0$
        \lto
       Let $v$ be a vertex with $2t_0<|v|<\sqrt8$.}
           {\rmx Section~9 assumes that the diagonal belongs to
          a quarter in the $Q$-system, but Lemma~10.14 uses these
          results when $\{0,v\}$ has $0$ or $1$ anchors.  To make
          this coherent, we should assume throughout Section~9 that
          we have the weaker condition that whenever $\{0,v\}$ has
          two or more anchors, it is a diagonal of a quarter in the $Q$-system.
          The proofs of Section~9 all go through in this context.
           (Lemma~9.7 is all that is relevant here.)}

[p.87,Definition~9.3]
	{\rmx In definition of $\Delta(v,W^e)$, we
	can have some $Q$ (as in Fig~9.1)
	with negative orientation.
	In this case, $E_v\cap E_i$ can clip
	the other side.  We want the object
	without clipping.   $\Delta(v,W^e)$ should be understood as the
        unclipped object.}
	
[p.88,Definition~9.6]
	{\rmx The definition is poorly worded.  First of
	all, it requires that the subscript to
	$\epsilon$ to be a vertex, but then in
	the displayed equation, it makes $w/2$ the
	subscript, which is not a vertex.   To
	define $\epsilon'$, move from $w/2$ along
	the ray through $x'$ until an edge of the
	Voronoi cell is encountered.  If $v,w,u$
	are the three vertices defining that edge,
	then set $\epsilon'_v(\Lambda,x)=u$.
	Degenerate cases, such as when two different
	edges are encountered at the same time,
	can be resolved in any consistent fashion.  In~\cite{hales:2008:collection},
      these degeneracies are avoided altogether, by replacing functions 
      $\epsilon,\epsilon'$ with sets $\Phi,\Phi'$.
     }
	
[p.88,Lemma~9.7,\line+2] 
	$w$ {\it  and}  $v$\lto $w$ {\it and}  $u$

[p.88,L.~9.7,Claim~1]
	\text{{\it  with }} $|w - w'|\le 2t_0$, \text{ {\it and} }
	\lto \text{ {\it with} }

[p.88,L.~9.7,\line+5]
         {\it Then: $\lto$ Let
          $
          R'_w = \{x\in R_w \cap(0,\{u,w\})\mid 
          \epsilon_0(x,\{u,w\}) = u\}.
          $
          Assume that $R'_w$ is not empty. Then:}

[p88,L.~9.7,Claim~3]
        $R_w \lto R'_w$

[p.89,\line+2]
	$
	\{w,v\}\lto\{w,u\}
	$

[p.92,\line+16,\line+21]
   $     \max_j u_j \lto \max_j |u_j|$
	
[p.93,\line-4]
	$
	\text{{\it obstructed from} }w \lto
	\text{{\it obstructed from} }w'
	$
		
[p.93,\line-2]
	$
	\text{{\it from some}} \lto \text{{\it for some}}
	$

[p.99,\line+1]
        $
        \text{{\it start}} \lto \text{{\it star}}
        $

[p.105,Lemma~10.14]  {\rmx In the proof of the cases involving
   $0$ or $1$ anchor, a combination of the decompositions from
   Section~8.4 and Section~9 are used.  These decompositions haven't
   been shown to be compatible.  
   Instead, it is better to combine
   $\Delta(v,W)$ with $t_0$-truncation on the rest of the quad-cluster.
   With a $t_0$ truncation, we no longer have the non-positivity results
   from Section~8.  (The quoins give a positive contribution.) 
   However, a routine calculation shows that
   the estimate on $\Delta(v,W)$ is sufficiently small so that we still
   obtain a constant less than $-1.04\,\op{pt}$.}

[p.116][p.121] {\rmx Definition~11.7 allows masked
flat in definition of $3$-unconfined.
Definition~11.24 requires no masked flats
in the same definition.  Use Definition~11.24 (no masked flats), rather than~11.7.  
Where masked flats occur,
treat them with Lemma~11.23, parts (1) and (2).}

[p.116,\line+1] 
	$
	\text{{\it Lemma}}~4.16 \lto \text{{\it Lemma}}~4.17
	$

[p.117,before Lemma~11.9]
	$
	\text{\it two others} \lto \text{\it three others}
	$
	
[p.117,Def~11.8]
    $
    y1 \lto y_1
    $

[p.119,Definition~11.5]  {\rmx By definition, we require a masked flat quarter to
be a strict quarter.}
	
[p.121] {\rmx See p.116.}

[p.121,\line-5]
	$
	0.2274 \lto 0.02274
	$

[p.123. flat case (2)]  {\rmx It is missing
isolated quarters cut from the side.
To fix this, in condition 2(f), }
	$
	\eta_{456}\ge\sqrt2$ \lto
	$\eta_{456}\ge\sqrt2$ {\it or } $\eta_{234}\ge\sqrt2$
	
[p.124,Lemma~11.27]  {\it The bound of $0$ is established in Theorem 8.4} \lto
{\it The bound of $0$ for upright quarters appears in Lemma~8.12.  The bound of $0$ for the other anchored simplex appears in Lemma~8.7 or 8.13, depending on the circumradius.}

[p.126] 
{\rmx Theorem~12.1 should include $\sigma_R(D)\le s_n$
with $s_3 = 1\,\op{pt}$ and $s_4=0$, and
$\tau_R(D) \ge t_3 = 0$.}

[p.131] {\rmx Section~\ref{sec:biconnected} gives
the deformation arguments that produce a biconnected graph.}

[p.139,Lemma~12.18,proof,\line+3] 
	$C_0(|v|,\pi) \lto
	C_0^u(|v|,\pi)
	$
	
[p.139,Lemma~12.18,proof,\line+6] 
	$
	\tau_0(C_0^u(2t_0,\pi))-\pi_{\max}\lto
	\tau_0(C_0^u(2.2,\pi))-\pi_{\max}
	$

[p.144,\line+11,\line+17]
	$2t_0^2 \lto (2t_0)^2
	$

[p.146]
		$S_n^\pm$ \lto
	{\it of 3-crowded, 3-undefined, and
	4-crowded combinations}

[p.148,\S13.6]  {\rmx This entire
section is misplaced.  It belongs with
\S25.5 and \S25.6.}

[p.149,before 13.7]
{\it the diagrams\lto
	Figs~25.1--25.4}

[p.149,p.156] {\rmx The definition of $\delta_{loop}$ was accidently
dropped from the published version.  Set $\delta_{loop}(4,2)=0.12034 $
$\delta_{loop}(5,1)=0.24939$.  These constants and their properties
appear in the earlier 2002
arXiv preprint of the proof {\it The Kepler conjecture (Sphere Packings VI)}.
}

[p.156,Lemma~13.5,\line+4]
	{\it respectively for $\tau_R(D)$\lto
	respectively, for $\sigma_R(D)$ and }
	$\tau_R(D)$ 

[p.164,\line-1] 
	{\it This shows$\ldots$ occur.
	\lto This completes the proof.}

[p.173,\line+4] {\rmx Insert the subscript on $b$,
as in Proposition~15.5, starting on page 173:}
   $b$ \lto $b_q$.

[p.182,Lemma~16.7]  {\rmx The bound of $0$ has not been
shown to hold on each half.  This is not a 
direct consequence
of Theorem~8.4 as claimed.
This can be fixed as follows. 
Let $v_1$, $v_3$ be the corners giving the endpoints of the long edge of the
acute triangle at $0$, and let $v_2,v_4$ be the other two corners.
If either vertex $v_1,v_3$ 
has height greater than $2.3$ show that the $\op{vor}_0$-scored quad cluster scores
less than $-1.04\,\op{pt}$.  
For this, we may use the deformations of Lemmas~12.10.  The length of the diagonal along the acute face remains fixed and at least $\sqrt8$.  We claim these deformations produce a diagonal of length less than $\sqrt8$ between opposite corners of the quadrilateral.  (If not, the deformations produce a rhombus with side $2$ and diagonals both greater than $\sqrt8$, which is a geometric impossibility.)  We cut the quad cluster along the diagonal of length $\sqrt8$ and continue with deformations until the top edges on each simplex are $(y_4,y_5,y_6)=(2,2,\sqrt8)$.  We may apply \calc{474496219} and \calc{8990938295} to the two separate simplices to obtain the result.}
Now we may assume that the heights of $v_1,v_3$ are at most $2.3$.  If either
height is at least $2.1$, the result follows from \calc{5127197465}, which gives
the bound of $0$ on each half.

Finally, we have the case where both heights are at most $2.1$.  We may apply
dimension-reduction techniques so that that the each of the two
remaining corners $v\ne v_1,v_3$
of the quad cluster either has height $2$ or has distance $2$ from $v_1$ or $v_3$.
We then reprove Lemma~16.8 without using the bound of $0$ and Lemma~16.9 for tetrahedra without the bound  $-1.04\,\pt$.  This appears in \calc{1551562505} and \calc{3013446042}.  

If the dimension reduction drops the cross-diagonal $\{v_2,v_4\}$ all the way to $\sqrt8$, then we may swap diagonals and continue, until both diagonals are exactly $\sqrt8$.  In this case, by the cases already considered, we may assume that each corner has height at most $2.1$.  Also, geometric considerations give that the other edges are at most $2.02$:
  $$
  {\mathcal E}(2,2,2,\sqrt8,2.1,2.1,2,2,2.02) >\sqrt8.
  $$
The result follows in this case by \calc{4723770703}.

[p.241]  {\rmx {\it Mixed} is defined so as to include
the pure analytic case.  In earlier articles,
`mixed' excludes the pure analytic.  }
	{\it mixed\lto mixed or pure}
	
[p.243,\line+13,\line+14,\line+15]
	{\rmx Delete three sentences:}
	{\it `Let $v_{12}$ be $\ldots$  We let $\ldots$
	 Break the pentagon $\ldots$'}
	
[p.248,last displayed formula]  
	$=$ \lto $+$
{\rmx so that it reads}
	$$
	\sum_i f_{R_i}(D) \le \hat\sigma(Q_i) +
	\op{vor}_{R',0}(D) + \pi_R
	$$

[p.252,\S25.7,Cases~2 and 3]  {\rmx `The flat quarter'
is mentioned, but there are no flat quarters
that have been introduced into the context.  
This passage
has been displaced from its original context.}

[p.254,\line+7]
{\it to branch combine \lto to combine}
\end{\sz}

\bibliographystyle{abbrv}
\bibliography{all}

\bigskip

\begin{\sz}
{Research supported by NSF grant 0804189.}
\end{\sz}

\end{document}